\documentclass[letterpaper, 12pt]{amsproc}

\usepackage{amsmath,amssymb,anysize,times,float,enumerate}
\usepackage{listings}
\usepackage{multicol}

\usepackage{hyperref} 
\hypersetup{citecolor=red, linkcolor=blue, colorlinks=true}

\usepackage{tikz}
\tikzstyle{every node} = [inner sep=.1cm]

\renewcommand\le{\leqslant}
\renewcommand\ge{\geqslant}

\usetikzlibrary{decorations.markings}
\usetikzlibrary{arrows}

\tikzstyle{vertex}=[circle,draw,inner sep=0pt, minimum size=6pt]

\newtheorem{thm}{Theorem}[section]
\newtheorem{lem}[thm]{Lemma}
\newtheorem{prop}[thm]{Proposition}
\newtheorem{cor}[thm]{Corollary}
\newtheorem{const}[thm]{Construction}

\theoremstyle{definition}

\theoremstyle{remark}

\newcommand\Wr{{\rm \,wr\, }}

\newcommand{\Aut}{\operatorname{Aut}}
\newcommand{\GF}{\operatorname{GF}}
\newcommand{\Sym}{\operatorname{Sym}}
\def\CT{\operatorname{CT}}

\newcommand{\M}{\mathcal{M}}

\newcommand{\Z}{\mathbb{Z}}

\renewcommand\ell{l}

\title{Graphs that contain multiply transitive matchings}

\author{Alex Schaefer}
\address{Department of Mathematics, University of Kansas, 1460 Jayhawk Blvd, Lawrence, KS 66045}
\email{alex.scha4@ku.edu}

\author{Eric Swartz}
\address{Department of Mathematics, William \& Mary, P.O. Box 8795, Williamsburg, VA 23187-8795}
\email{easwartz@wm.edu}

\subjclass[2010]{Primary 05C25, 20B25, 05C22}

\keywords{perfect matching, graph automorphisms, 2-transitive group, voltage graph, near-polygonal graph}

\begin{document}

\begin{abstract}
Let $\Gamma$ be a finite, undirected, connected, simple graph.  We say that a matching $\mathcal{M}$ is a \textit{permutable $m$-matching} if $\mathcal{M}$ contains $m$ edges and the subgroup of $\text{Aut}(\Gamma)$ that fixes the matching $\mathcal{M}$ setwise allows the edges of $\mathcal{M}$ to be permuted in any fashion.  A matching $\mathcal{M}$ is \textit{2-transitive} if the setwise stabilizer of $\mathcal{M}$ in $\text{Aut}(\Gamma)$ can map any ordered pair of distinct edges of $\mathcal{M}$ to any other ordered pair of distinct edges of $\mathcal{M}$.  We provide constructions of graphs with a permutable matching; we show that, if $\Gamma$ is an arc-transitive graph that contains a permutable $m$-matching for $m \ge 4$, then the degree of $\Gamma$ is at least $m$; and, when $m$ is sufficiently large, we characterize the locally primitive, arc-transitive graphs of degree $m$ that contain a permutable $m$-matching.  Finally, we classify the graphs that have a $2$-transitive perfect matching and also classify graphs that have a permutable perfect matching.
\end{abstract}

\maketitle

\section{Introduction}

All graphs considered in this paper are finite, undirected, and simple, and are connected unless otherwise stated.  A \textit{matching} $\M$ is a set of edges of a graph $\Gamma$ such that no two are incident with a common vertex.  A matching $\M$ is a \textit{perfect matching} of $\Gamma$ if each vertex of $\Gamma$ is incident with exactly one edge of $\M$. In other words, a matching $\M$ is the edge set of a $1$-regular subgraph of $\Gamma$, and $\M$ is perfect exactly when the $1$-regular subgraph is spanning.  Let $\Gamma$ be a graph, let $\M$ be a matching in $\Gamma$ with $m$ edges, and let $G$ be a subgroup of $\Aut(\Gamma)$.  We will say that $\M$ is a \emph {$G$-permutable $m$-matching} if the restriction of the action of $G$ to the edge set of $\M$ is that of the symmetric group $S_{m}$ , i.e., if $G_\M^{E(\M)}\cong S_{m}$.  If such a group $G$ and matching $\M$ exist, we will say that the graph $\Gamma$ contains a \textit{permutable $m$-matching}.  The concept of a permutable matching is due to Zaslavsky, motivated by a question involving signed graphs from \cite{schaefercyclespace}.

A group $G$ of permutations of a set $\Omega$ is \textit{2-transitive} on $\Omega$ if, given two ordered pairs of distinct elements $(\alpha, \beta), (\gamma, \delta) \in \Omega \times \Omega$, there exists $g \in G$ such that $(\alpha, \beta)^g := (\alpha^g, \beta^g) = (\gamma, \delta);$
in other words, $G$ can map any ordered pair of distinct elements to any other ordered pair of distinct elements.  We say that a matching $\M$ of a graph $\Gamma$ is a \textit{$2$-transitive matching} if the setwise stabilizer of $\M$ in $\Aut(\Gamma)$ is $2$-transitive on the edges of $\M$.  

The purpose of this paper is to study graphs that contain a matching $\M$ such that the setwise stabilizer of $\M$ is multiply transitive on the edges of $\M$.  This paper is structured as follows.  In Section \ref{sect:background}, we provide background information necessary for the later sections.  In Section \ref{sect:constructions}, we provide various constructions for graphs with a permutable $m$-matching, showing that there is actually an abundance of such graphs for any $m$.  Moreover, there are even numerous examples when the graph $\Gamma$ is required to be \textit{$G$-arc-transitive} for $G \le \Aut(\Gamma)$, that is, when $G$ is transitive on the set $A(\Gamma)$ of ordered pairs of adjacent vertices.  In Section \ref{sect:local}, we prove the following result, which shows that the degree of a vertex cannot be too small in a graph with a permutable matching, up to a single, known family of exceptions.    

\begin{thm}
\label{thm:degreem}
 Let $G \le \Aut(\Gamma)$.  If $\Gamma$ is a connected $G$-arc-transitive graph with a $G$-permutable $m$-matching, then the degree of the graph $\Gamma$ is at least $m$ unless $m = 3$ and $\Gamma$ is the cycle $C_{3k}$, where $k \ge 2$.
\end{thm}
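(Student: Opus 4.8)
The plan is to play the $S_m$-action on the edges of $\M$ against the arc-transitivity of $\Gamma$, and to reduce everything to a dichotomy about how the matching edges are joined to one another. First, since $\Gamma$ is $G$-arc-transitive it is vertex-transitive, hence regular of some degree $d$, and the goal becomes showing $d \ge m$ outside the stated exception. Write $\M = \{e_1, \ldots, e_m\}$ with $e_i = \{u_i, v_i\}$. Because $G_\M$ induces the full symmetric group $S_m$ on $\{e_1,\ldots,e_m\}$, the stabilizer in $G_\M$ of a single edge induces $S_{m-1}$ on the remaining $m-1$ edges, which is transitive; this transitivity of edge-stabilizers is the engine of the whole proof.

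Next I would introduce the relation on $\M$ in which $e_i$ and $e_j$ are \emph{joined} when some vertex of $e_i$ is $\Gamma$-adjacent to some vertex of $e_j$. This relation is $G_\M$-invariant, and since $G_\M$ is $2$-transitive on $\{e_1,\ldots,e_m\}$ it has a single orbit on unordered pairs; hence the only invariant graphs on these $m$ points are the empty graph and the complete graph. So either no two matching edges are joined, or every two are. In the latter case, for $m \ge 4$ the bound is clean: choose a $\Gamma$-edge joining $e_1$ to $e_2$, incident to some endpoint $x \in \{u_1,v_1\}$, and let $H$ be the stabilizer of the \emph{vertex} $x$ in $G_\M$. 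Then $H$ has index at most $2$ in the stabilizer of $e_1$, so it induces on $\{e_2,\ldots,e_m\}$ a group containing $A_{m-1}$, which is transitive for $m \ge 4$; applying elements of $H$ to the chosen edge produces, for each $j \ge 2$, a neighbor of $x$ lying in $e_j$. These $m-1$ neighbors, together with the partner of $x$ in $e_1$, are distinct, giving $d \ge m$.

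The remaining case, in which no two matching edges are joined, is the heart of the matter: every path between distinct matching edges must run through unmatched vertices. The plan is to transfer the argument one step outward. For an unmatched neighbor $w$ of $u_1$, the set of matching edges meeting $N(w)$ is invariant under a suitable subgroup of $G_\M$; a normal-subgroup computation shows that when $d \le m-1$ this subgroup still induces at least $A_{m-1}$ on $\{e_2,\ldots,e_m\}$ (cleanest for $m$ large, with the few small values checked directly), so by transitivity that set of edges is either empty or all of $\{e_2,\ldots,e_m\}$. The ``all'' alternative hands us a vertex with $m$ neighbors and thus $d \ge m$; the ``empty'' alternative forces the matching edges to be linked only through long, thin chains of unmatched vertices. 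I would then aim to show that such sparseness, under vertex- and arc-transitivity, forces $\Gamma$ to be $2$-regular, i.e. a cycle $C_n$. Since $\Aut(C_n)$ is dihedral and every section of a dihedral group is cyclic or dihedral, the quotient $G_\M^{E(\M)} \cong S_m$ can arise only for $m \le 3$, isolating the exception.

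Finally I would dispose of the small cases: $m \le 2$ is immediate, and for $m = 3$ one checks that a permutable $3$-matching in a cycle forces the three edges to be equally spaced, producing precisely $C_{3k}$ with $k \ge 2$ and no other degree-$2$ example. I expect the main obstacle to be exactly the no-cross-edge, low-degree analysis: one must reconcile the very high transitivity of $S_m$ on $\M$ with the possibility of arbitrarily long connecting paths, and in particular rule out the intermediate range $3 \le d \le m-1$ without assuming regularity beyond degree $2$ at the outset. The ``empty'' alternative is the delicate point, since sparse connectivity does not visibly collapse to a cycle on its own, and making that collapse rigorous is where I anticipate the real work lies.
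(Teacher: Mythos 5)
Your invariant-relation dichotomy is sound, and your argument in the ``complete'' case (some vertex of $e_1$ adjacent to a vertex of $e_2$) is correct for $m \ge 4$ and is actually more elementary than what the paper does there. The genuine gap is the ``empty'' case, and it is not a finishing detail: the ``normal-subgroup computation'' you invoke but never perform is the entire technical core of the theorem. What is needed (and what the paper's Proposition \ref{prop:Am-1Comp} actually carries out) is a subnormal-series argument along a path from $e_1$ toward another matching edge: at each step, the image in $S_{m-1}$ of the pointwise stabilizer of the neighborhoods traversed so far is normal in the previous image, so by simplicity of $A_{m-1}$ (this is exactly where $m \ge 6$ enters) it is either trivial --- in which case $A_{m-1}$ is a section of a group acting faithfully on at most $d-1$ points, forcing $d \ge m$ --- or it still contains $A_{m-1}$, and one moves one step further out. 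Connectivity guarantees this terminates at a vertex whose neighborhood meets a matching edge, and then your own ``empty or all'' observation gives $d \ge m$. In particular, your fallback plan --- that the sparse alternative ``forces $\Gamma$ to be $2$-regular'' --- is not the right mechanism, and you correctly sense it cannot be made rigorous: once the kernel argument is done, connectivity leaves no residual case for $m \ge 6$, and the intuition itself is misleading, since the odd graph $O_m$ (Theorem \ref{thm:oddgraphs}) carries a permutable $m$-matching no two of whose edges are joined by an edge of $O_m$, yet $O_m$ has degree exactly $m$ and is nothing like a cycle. The exceptional cycles occur only at $m = 3$, where $A_{m-1}$ is trivial and all of this machinery is empty.

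The second gap is $m \in \{4,5\}$, which you propose to ``check directly.'' These cannot be checked directly: one must exclude infinite families of candidate graphs, and the simplicity of $A_{m-1}$ is unavailable. Your joined-case argument does cover $m = 4,5$, but in the empty case the paper needs two separate, substantive arguments: Tutte's theorem (vertex stabilizers of cubic arc-transitive graphs have order dividing $48$, hence edge stabilizers have order dividing $32$, which is incompatible with the stabilizer of a matching edge inducing $S_3$ on the remaining three edges) to eliminate degree $3$ when $m \ge 4$, and a delicate ad hoc analysis of shortest paths from $e_1$ to the other matching edges to eliminate degree $4$ when $m = 5$; for $m=5$ the clean ``trivial or contains $A_{m-1}$'' dichotomy genuinely fails because $A_4$ is not simple. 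As it stands, your outline proves the theorem only in the regime where two matching edges are joined by an edge of $\Gamma$ and $m \ge 4$.
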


Many of the graphs with permutable matchings constructed in Section \ref{sect:constructions} contain a system of \textit{imprimitivity}, i.e., the full automorphism group of the graph preserves a nontrivial partition of the vertex set (and, in some cases, the stabilizer of a vertex $\alpha$ even preserves a nontrivial partition of the neighbors of $\alpha$).  If a group $G$ of permutations of a set $\Omega$ is transitive on $\Omega$ but $G$ does not preserve any partition of $\Omega$ other than the trivial partitions of $\Omega$ into singleton sets and the single set $\Omega$, then $G$ is \textit{primitive} on $\Omega$.  Given a graph $\Gamma$ and $G \le \Aut(\Gamma)$, $\Gamma$ is said to be \textit{$G$-locally primitive} if, given any $\alpha \in V(\Gamma)$, the stabilizer of $\alpha$ in $G$ is primitive on the neighbors of $\alpha$.  Given the constructions in Section \ref{sect:constructions} and Theorem \ref{thm:degreem}, it makes sense to consider graphs with degree $m$ that are locally primitive and arc-transitive containing a permutable $m$-matching.  In Section \ref{sect:degreem}, we provide a characterization of such graphs.  The notation and terminology used in the following theorem are explained in depth in Section \ref{sect:background}.

\begin{thm}
\label{thm:mvalent}
Let $\Gamma$ be a connected $G$-arc-transitive, $G$-locally primitive graph with degree $m \ge 6$ that contains a $G$-permutable $m$-matching, and suppose $G$ has a nontrivial normal subgroup $N$ that has more than two orbits on vertices.  If the normal quotient graph $\Gamma_N$ does not contain a permutable $m$-matching, then $\Gamma_N$ is a near-polygonal graph and $(\Gamma_N, G/N)$ is locally-$S_m$.
\end{thm}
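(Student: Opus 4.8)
The plan is to run the standard normal-quotient reduction for arc-transitive graphs and then to track what happens to the permutable matching under the quotient map. Since $N$ is a nontrivial normal subgroup of $G$ with more than two orbits on $V(\Gamma)$ and $\Gamma$ is $G$-arc-transitive, the theory of normal quotients applies: $N$ acts semiregularly on $V(\Gamma)$, the graph $\Gamma$ is a normal $N$-cover of $\Gamma_N$, and $\Gamma_N$ is $(G/N)$-arc-transitive. In particular $\Gamma$ and $\Gamma_N$ share the same valency $m$, and the local action is preserved, so $\Gamma_N$ is $(G/N)$-locally primitive and the permutation group induced by a vertex stabiliser on its neighbours is the same, up to permutation isomorphism, in the two graphs. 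I record these facts first, since they let me pass freely between $\Gamma$ and $\Gamma_N$ and, in particular, reduce the ``locally-$S_m$'' conclusion to a statement about a single vertex stabiliser in $G/N$.

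Next I would push the matching down. Write $\pi\colon G \to G/N$ for the quotient map, let $\M = \{e_1, \dots, e_m\}$ be the $G$-permutable matching, and let $\bar e_i$ denote the image of $e_i$ in $\Gamma_N$. Because $N \trianglelefteq G$, the partition of $V(\Gamma)$ into $N$-orbits is $G_\M$-invariant, so $G_\M$ acts on $\{\bar e_1, \dots, \bar e_m\}$ compatibly with its action on $\M$; as $e_i \mapsto \bar e_i$ is $G_\M$-equivariant and $G_\M$ induces $S_m$ on $\M$, the only $G_\M$-invariant symmetric relations on the indices are the empty and the complete relations. The relation ``$\bar e_i$ and $\bar e_j$ meet'' is such a relation, so either no two images meet or every two images meet. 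In the first case the $\bar e_i$ are distinct and pairwise disjoint, so $\bar\M = \{\bar e_1, \dots, \bar e_m\}$ is a matching of $m$ edges in $\Gamma_N$; equivariance then shows that $\pi(G_\M)$ stabilises $\bar\M$ and induces $S_m$ on its edges, whence $\bar\M$ is a $(G/N)$-permutable $m$-matching of $\Gamma_N$, contrary to hypothesis. Hence every two images meet.

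Now I would pin down the geometry of a family of $m$ pairwise-meeting edges. Applying the all-or-nothing principle again to the relation ``$\bar e_i = \bar e_j$'', either all images coincide or all are distinct. When they are distinct, the classical description of intersecting families of $2$-element sets, together with $m \ge 6$, forces the $\bar e_i$ to share a common vertex $\bar v$, since the only alternative, a triangle, has just three edges. For $m \ge 2$ distinct edges the centre $\bar v$ of this star is their \emph{unique} common vertex, so $\pi(G_\M)$ fixes $\bar v$; and since $e_i \mapsto \bar e_i \mapsto (\text{the endpoint of } \bar e_i \text{ other than } \bar v)$ is an $S_m$-equivariant bijection onto the $m$ neighbours of $\bar v$, the stabiliser $\pi(G_\M)_{\bar v}$ induces $S_m$ on $\Gamma_N(\bar v)$. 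By arc-transitivity this holds at every vertex, so $(\Gamma_N, G/N)$ is locally-$S_m$. The degenerate possibility that all $\bar e_i$ coincide must be treated separately: here the $m$ matching edges all lie over a single edge of $\Gamma_N$, and I would rule this out, or reduce it to the matching case above, using the semiregularity of $N$ together with local primitivity.

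Finally, and this is the step I expect to be the main obstacle, I must produce the $(G/N)$-invariant cycle system exhibiting $\Gamma_N$ as a near-polygonal graph. The collapse of the matching to a star signals short cycles in $\Gamma_N$ whose preimages in the cover $\Gamma$ are paths joining distinct sheets: two matching edges $e_i, e_j$, disjoint in $\Gamma$, have images forming a $2$-arc (a path of length two) $\bar w_i\,\bar v\,\bar w_j$ through $\bar v$, and the local-$S_m$ action makes all such $2$-arcs equivalent. The plan is to take $\mathcal{C}$ to be the $(G/N)$-orbit of a suitable shortest cycle through such a $2$-arc and to verify that each $2$-arc of $\Gamma_N$ lies in exactly one member of $\mathcal{C}$. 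Establishing the \emph{uniqueness} half of this incidence condition, namely that the local-$S_m$ symmetry does not force a second cycle of the same length through a given $2$-arc, is the delicate point, and it is here that the hypotheses $m \ge 6$ and local primitivity, rather than mere local transitivity, do the real work; this is also the step into which the degenerate total-collapse case from the previous paragraph must be absorbed.
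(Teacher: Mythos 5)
Your first half is correct, and it is essentially the paper's own case division in cleaner clothing: your three alternatives (all images disjoint / all distinct and pairwise meeting, hence a star / all images equal) are exactly the paper's cases 5.2.1--5.2.3, and your invariant-relation argument via the $2$-homogeneity of $S_m$ is a tidier way to obtain that trichotomy than the paper's relabelling argument. The derivation of locally-$S_m$ in the star case is also sound, and matches the paper. The total-collapse case that you defer is handled in the paper not by absorption into the cycle construction but by a separate argument: one takes a shortest path in $\Gamma$ between two matching edges, looks at the sequence of $N$-orbits it visits, and locates the first orbit whose $G_{\M}$-orbit has size $m$; this again forces $(\Gamma_N, G/N)$ to be locally-$S_m$, so this case folds into the same conclusion rather than into the near-polygonal step.

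The genuine gap is the near-polygonal conclusion itself, which is the main content of the theorem and which you present only as a plan. As stated, the plan would not go through: taking the $(G/N)$-orbit of a shortest cycle through a $2$-arc and hoping that each $2$-arc lies in exactly \emph{one} member is not something that local primitivity and $m \ge 6$ can deliver on their own. Indeed, $K_{m,m}$ is arc-transitive and locally-$S_m$, yet every $2$-arc in it lies in $m-1$ four-cycles; what excludes it from the theorem is not any cycle-counting but the fact that it \emph{does} contain a permutable $m$-matching. This points to the missing idea: the no-permutable-matching hypothesis must be used a second time, precisely at this step. The paper does this through Lemma \ref{lem:locallySm}, whose proof runs a trichotomy on the kernel $\left(G/N\right)_{\bar\alpha}^{[1]}$ of the local action of the locally-$S_m$ quotient: this kernel is $1$, $A_{m-1}$, or $S_{m-1}$ (using $m \ge 6$, the normal subgroup structure of $S_{m-1}$, and a theorem of van Bon forcing $\left(G/N\right)_{\bar\alpha\bar\beta}^{[1]} = 1$). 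When the kernel is $A_{m-1}$ or $S_{m-1}$, one constructs a permutable $m$-matching in $\Gamma_N$ by hand from the commuting kernels $\left(G/N\right)_{\bar\beta_i}^{[1]}$, contradicting the hypothesis; when the kernel is trivial, the quotient is $(G/N,2)$-arc-transitive and the $2$-arc stabilizer fixes a vertex in $\Gamma_N(\gamma)\setminus\{\beta\}$, and then near-polygonality follows at once from Zhou's criterion (Lemma \ref{lem:zhounear}), which converts the existence-and-uniqueness condition on cycles into exactly this group-theoretic fixed-point condition. Without the kernel trichotomy and Zhou's criterion, the "delicate point" you flag is not merely delicate -- it is the theorem, and your sketch gives no viable route to it.
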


A group $G$ is said to be \textit{quasiprimitive} on a set $\Omega$ if every nontrivial normal subgroup of $G$ is transitive on $\Omega$, and a group $G$ is said to be \textit{biquasiprimitive} on a set $\Omega$ if $\Omega$ has a $G$-invariant partition $\Omega = \Delta_1 \cup \Delta_2$ such that the setwise stabilizer $G_{\Delta_i}$ is quasiprimitive on $\Delta_i$ for $i = 1,2$.  Using this terminology, Theorem \ref{thm:mvalent} says that, if there exists a graph $\Gamma$ that is $G$-arc-transitive and $G$-locally primitive with degree $m \ge 6$ that contains a $G$-permutable $m$-matching, then one can keep taking normal quotients of this graph until reaching either (1) a vertex-quasiprimitive graph with a permutable $m$-matching, (2) a vertex-biquasiprimitive graph with a permutable $m$-matching, or (3) a near-polygonal graph such that the stabilizer of a vertex can permute the $m$ neighbors in any way; see Section \ref{sect:background}.  Moreover, graphs in each case exist and are constructed in Section \ref{sect:constructions}. We do not know if the theorem holds for $m\leq5$; the restriction on $m$ is a result of the technique.

Section \ref{sect:classification} is devoted to the proof of the following theorem, which classifies the graphs with a $2$-transitive perfect matching. Joins and matching joins are defined following the statement of the theorem.

\begin{thm}
\label{thm:2transperfect}
 Let $\Gamma$ be a connected graph on $2m$ vertices with a $2$-transitive perfect matching $\M$ containing $m$ edges.  Then we have one of the following cases:
 \begin{itemize}
  \item[(1)] $\Gamma$ is join between two graphs that are either complete or edgeless:
    \begin{itemize}
      \item[(a)] $K_m \vee K_m \cong K_{2m}$,
      \item[(b)] $K_m \vee \overline{K}_m$,
      \item[(c)] $\overline{K}_m \vee \overline{K}_m \cong K_{m,m}$.
    \end{itemize}
  \item[(2)] $\Gamma$ is a matching join between two graphs that are either complete or edgeless (but not both edgeless):
    \begin{itemize}
      \item[(a)] $K_m \veebar  K_m$,
      \item[(b)] $K_m \veebar  \overline{K}_m$.
    \end{itemize}
  \item[(3)] Let $m = p^f$, where $p$ is a prime and $p^f \equiv 3 \pmod 4$.  Then either:
    \begin{itemize}
      \item[(a)] $\Gamma$ is the incidence graph of the Paley symmetric $2$-design over $\GF(p^f)$, i.e., $V(\Gamma) = \GF(p^f) \times \{0,1\}$, and $(x,i), (y,j) \in V(\Gamma)$ are adjacent if and only if $i = 0$, $j = 1$, and $y -x$ is a square in $\GF(p^f)$; or
      \item[(b)] $\Gamma$ is the graph obtained by taking the incidence graph of the Paley symmetric $2$-design over $\GF(p^f)$ and replacing the independent sets with copies of $K_{p^f}$; that is $V(\Gamma) = \GF(p^f) \times \{0,1\}$, and $(x,i), (y,j) \in V(\Gamma)$ are adjacent if and only if either $i = j$ and $x \neq y$ or if $i = 0$, $j = 1$, and $y -x$ is a square in $\GF(p^f)$.
    \end{itemize}
  \item[(4)] Let $m = 5$.  Then either
    \begin{itemize}
      \item[(a)] $\Gamma$ is the Petersen graph; or
      \item[(b)] $\Gamma = C_5 \vee C_5$.
    \end{itemize}

 \end{itemize}

\end{thm}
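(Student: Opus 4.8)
The plan is to work throughout with the setwise stabilizer $G = \Aut(\Gamma)_\M$, which by hypothesis induces a $2$-transitive group on the $m$ edges of $\M$. Writing $e_i = \{a_i, b_i\}$, the matched pairs $\{a_i, b_i\}$ form a $G$-invariant partition of $V(\Gamma)$ into $m$ blocks of size $2$, on which $G$ acts $2$-transitively. For distinct blocks $i, j$ I would record the induced subgraph $\Gamma[e_i \cup e_j]$ on the four vertices $a_i, b_i, a_j, b_j$; since it always contains the two matching edges, it is determined by the set of cross edges between the blocks, and $2$-transitivity forces both its isomorphism type and the number $c$ of cross edges to be independent of the unordered pair $\{i,j\}$.

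First I would carry out a purely local classification. The induced two-block subgraph is one of $2K_2$ ($c=0$), $P_4$ ($c=1$), the $4$-cycle $C_4$ or the paw ($c=2$), the diamond $K_4 - e$ ($c=3$), or $K_4$ ($c=4$). Because $2$-transitivity provides a $g \in G$ interchanging any two blocks, this subgraph must admit an automorphism exchanging the two matching edges; comparing vertex degrees rules out the paw, so for $c=2$ only the $C_4$ survives. The extreme values are then immediate: $c = 0$ makes $\Gamma$ disconnected, while $c = 4$ forces every pair of blocks to induce $K_4$, giving $\Gamma = K_{2m}$, which is case (1a).

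The heart of the argument is to reinterpret the remaining cross-edge data as a $G$-invariant combinatorial structure on the $m$ blocks, on which $\bar{G} := G^{E(\M)} \le S_m$ acts $2$-transitively, and to organize the proof by the value of $c$. For $c = 2$ (the $C_4$ case) each vertex has exactly one cross-neighbour in every other block, so $\Gamma$ is an $m$-regular connected $\Z_2$-voltage graph over $K_m$; after fixing a labelling of each block, the choice ``parallel versus diagonal'' on each pair is a signing of $K_m$ whose switching class is a $G$-invariant two-graph. The globally consistent classes recover the matching join $K_m \veebar K_m$ (2a) and the complete bipartite graph $K_{m,m}$ (1c). For $c = 1$ and, dually, $c = 3$, each pair of blocks carries a distinguished cross edge, respectively non-edge, whose endpoints define a self-paired incidence structure on the blocks admitting the $2$-transitive group $\bar{G}$; translating the constancy of $c$ into regularity conditions yields a symmetric $2$-design, while the subcase of a globally consistent ``side'' recovers the joins $K_m \veebar \overline{K}_m$ (2b) and $K_m \vee \overline{K}_m$ (1b). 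This partitions the target list into the buckets $c=4\colon$ (1a); $\ c=3\colon$ (1b),(3b); $\ c=2\colon$ (1c),(2a),(4b); $\ c=1\colon$ (2b),(3a),(4a).

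The main obstacle is the final classification step: showing that the only $2$-transitive two-graphs and symmetric designs that actually occur are those on the list. Here I would appeal to the classification of $2$-transitive groups, of $2$-transitive symmetric designs, and of $2$-transitive two-graphs (hence, implicitly, to CFSG), matching parameters so that the incidence graphs of the Paley symmetric design (cases (3a) and its side-completed form (3b)) survive exactly when $m = p^f \equiv 3 \pmod 4$ --- this congruence being precisely what makes $x \mapsto -x$ interchange squares and non-squares, so that exactly one of each antipodal pair of cross edges is present and $c$ is constant --- and so that for the single small degree $m = 5$ the exceptional actions on five points produce the two extra graphs, the Petersen graph (4a) at $c=1$ and $C_5 \vee C_5$ (4b) at $c=2$. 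Verifying that no further design or two-graph slips through, and assembling the per-$c$ conclusions into the stated list, is the delicate part of the proof.
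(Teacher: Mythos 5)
Your opening phase coincides with the paper's: the constancy of the two-block induced subgraph, the list of five candidates, the exclusion of the paw by an edge-swapping automorphism, and the identifications $c=0\Rightarrow$ disconnected, $c=4\Rightarrow K_{2m}$ are exactly Lemmas \ref{lem:induced}--\ref{lem:20case}. The genuine gap is the step you yourself defer as ``the delicate part,'' and it is not a deferrable verification: it fails. For $c=2$ your reduction to a $G$-invariant switching class (two-graph) on the $m$ blocks is correct, but the conclusion that only the ``globally consistent'' classes survive is wrong. Taylor's classification of $2$-transitive two-graphs contains nontrivial examples for infinitely many $m$ (Paley two-graphs on $q+1$ points for prime powers $q\equiv 1 \pmod{4}$, symplectic, unitary, Ree, and sporadic families), and every one of them lands in your $c=2$ bucket: take the associated double cover of $K_m$ (fibers of size $2$, each pair of fibers joined by the perfect matching dictated by the sign) and add the $m$ fiber edges. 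Every switching automorphism lifts to a fiber-preserving automorphism of this graph, so the stabilizer of the fiber matching induces the full automorphism group of the two-graph on the $m$ edges, which is $2$-transitive. The smallest instance ($m=6$, $q=5$) is the icosahedron plus its antipodal matching: connected, every two matched pairs induce $C_4$, and the matching stabilizer induces $A_5\cong PSL(2,5)$ acting $2$-transitively on the six edges, yet the graph is neither $K_{6,6}$ (it has triangles) nor $K_6\veebar K_6$ (its clique number is $3$). So your route, carried out honestly, produces graphs that the statement excludes; ``verifying that no further two-graph slips through'' cannot be completed. (This is worth flagging beyond your proposal: it also conflicts with the paper's own treatment of the $C_4$ case, where the proof of Lemma \ref{lem:partition} asserts that the element $h$ either preserves or swaps the sets $A$ and $B$; since $h$ moves $e_1$ to $e_3$, it carries the partition $\{A_1,B_1\}$ to $\{A_3,B_3\}$, and nothing shown there forces these partitions to coincide --- precisely the point at which the icosahedron escapes.)

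A second, concrete sign that the buckets were not checked: $C_5\vee C_5$ does not sit at $c=2$ but at $c=3$; it is the partner of the Petersen graph under the complement-plus-matching bijection of Lemma \ref{lem:regbij}, and two matched edges in it induce the diamond $K_4\backslash\{e\}$. You should also be aware that the paper's actual argument never invokes two-graphs or a wholesale design classification in your sense: the $C_4$ case is handled by a direct partition argument (Lemmas \ref{lem:flip}--\ref{lem:11case}, with the flaw noted above), and the hard work occurs only for $P_4$ with $\Gamma$ regular, split into three subcases --- vertex-primitive (an elementary strongly-regular-graph eigenvalue-integrality computation pinning down the Petersen graph in Lemma \ref{lem:01prim}), $\M$ a block system (classification of symmetric spreads, Lemma \ref{lem:01imprim}), and bipartite (distance-transitive of diameter $3$, hence an incidence graph of a symmetric $2$-design with automorphism group of rank at most $3$, Lemma \ref{lem:01bipartite}); the diamond case is then read off by complementation. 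So your proposal is not a reorganization of the paper's proof but a different strategy, and in its present form its central $c=2$ step is not merely unproved but false as stated.
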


Here, $\Gamma_1 \vee \Gamma_2$ denotes the \textit{join} of the graphs $\Gamma_1$ and $\Gamma_2$, in which $V(\Gamma_1 \vee \Gamma_2) = V(\Gamma_1) \cup V(\Gamma_2)$ and 
\[E(\Gamma_1 \vee \Gamma_2) = E(\Gamma_1) \cup E(\Gamma_2) \cup \{\{\alpha, \beta\} : \alpha \in V(\Gamma_1), \beta \in V(\Gamma_2)\}.\]

The notation $\Gamma_1 \veebar _{\phi} \Gamma_2$ denotes a \textit{matching join} of $\Gamma_1$ and $\Gamma_2$.  In this case, both $\Gamma_1$ and $\Gamma_2$ must be graphs with $|V(\Gamma_1)| = |V(\Gamma_2)|$ and $\phi: V(\Gamma_1) \rightarrow V(\Gamma_2)$ is a bijection between the vertex sets. The graph $\Gamma_1 \veebar _{\phi} \Gamma_2$ is defined to have vertex set $V(\Gamma_1 \veebar _{\phi} \Gamma_2) = V(\Gamma_1) \cup V(\Gamma_2)$ and the edge set is
\[E(\Gamma_1 \veebar _{\phi} \Gamma_2) = E(\Gamma_1) \cup E(\Gamma_2) \cup \{\{\alpha, \alpha^{\phi}\} : \alpha \in V(\Gamma_1)\}.\]
When $\Gamma_1$ or $\Gamma_2$ is a complete graph or an empty graph, then the resulting graph is unique up to isomorphism regardless of the choice of $\phi$, and in this case we simply use the notation $\Gamma_1 \veebar  \Gamma_2$.

As an example of a matching join, consider two copies of $C_5$: $\Gamma_1 = \{1,2,3,4,5\}$ with $x$ adjacent to $y$ if and only if $x - y \equiv \pm 1 \pmod 5$ and $\Gamma_2 = \{6,7,8,9,10\}$ with, again, $x$ adjacent to $y$ if and only if $x - y \equiv \pm 1 \pmod 5$.  If we define $\phi$ to be $x^{\phi} = x + 5$, then the matching join $\Gamma_1 \veebar _{\phi} \Gamma_2$ is isomorphic to the $5$-prism, whereas if we define $\phi: \Gamma_1 \rightarrow \Gamma_2$ by $1^{\phi} = 6$, $2^{\phi} = 9$, $3^{\phi} = 7$, $4^{\phi} = 10$, and $5^{\phi} = 8$, then the matching join $\Gamma_1 \veebar _{\phi} \Gamma_2$ is isomorphic to the Petersen graph.

As a corollary of Theorem \ref{thm:2transperfect} we classify all connected graphs with a permutable perfect matching.

\begin{cor}
 \label{cor:mpermperfect}
 Let $\Gamma$ be a connected graph on $2m$ vertices with a permutable perfect matching $\M$.  Then $\Gamma$ is one of $K_{2m}$, $K_m \vee \overline{K}_m$, $K_{m,m}$, $K_m \veebar  K_m$, $K_m \veebar  \overline{K}_m$, $C_6$, or $K_6 \backslash \{3\cdot K_2\} \cong K_{2,2,2}$.
\end{cor}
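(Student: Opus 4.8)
The plan is to deduce the corollary directly from Theorem \ref{thm:2transperfect} by a sieving argument. The key observation is that, for $m \ge 2$, the symmetric group $S_m$ is $2$-transitive on $m$ points, so a permutable perfect matching is in particular a $2$-transitive perfect matching; the degenerate case $m = 1$ forces $\Gamma = K_2 = K_{2m}$ and is trivial. Hence every connected graph with a permutable perfect matching already appears in the list of Theorem \ref{thm:2transperfect}, and the task reduces to deciding, for each family in that list, whether the setwise stabilizer of $\M$ induces on the $m$ edges the \emph{full} symmetric group $S_m$ rather than merely some proper $2$-transitive subgroup.

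For the families in cases (1) and (2), I would verify permutability directly. In each of $K_m \vee K_m \cong K_{2m}$, $K_m \vee \overline{K}_m$, $K_{m,m}$, $K_m \veebar K_m$, and $K_m \veebar \overline{K}_m$, the matching $\M$ pairs the two sides through a canonical bijection, and the automorphism group visibly contains a ``diagonal'' copy of $S_m$ that permutes the indexing of this bijection — and hence the $m$ edges of $\M$ — arbitrarily. Thus each of these five graphs does carry a permutable perfect matching and survives the sieve, accounting for the first five graphs in the statement.

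The content of the argument is the sieving of cases (3) and (4), where the $2$-transitive action need not be that of $S_m$. For the Paley families in case (3), the full automorphism group of the graph is a known $2$-transitive group of order $O(q^2 \log q)$ in $q = p^f = m$; since the group induced on $E(\M)$ is a quotient of a subgroup of $\Aut(\Gamma)$, its order is at most $|\Aut(\Gamma)|$, which for $q \ge 7$ is far smaller than $m! = |S_m|$, so these graphs cannot be permutable. The sole survivor is $q = 3$: there every $2$-transitive group on the $m = 3$ edges is already $S_3$, so the $2$-transitive matchings are automatically permutable, and a direct check identifies the two connected members of family (3) at $q = 3$ as $C_6$ (from (3a)) and $K_6 \setminus \{3 \cdot K_2\} \cong K_{2,2,2}$ (from (3b)). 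For case (4), with $m = 5$, the setwise stabilizer of $\M$ in $\Aut(\Gamma) \cong S_5$ has order $20$ for the Petersen graph — the Frobenius group $\mathrm{AGL}(1,5) \cong F_{20}$ — and is likewise a proper subgroup of order $< 120 = |S_5|$ for $C_5 \vee C_5$; in neither case can the induced action be $S_5$, so both are discarded. Assembling the survivors yields exactly the seven graphs listed.

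The main obstacle is precisely this distinction between a $2$-transitive and a full symmetric action on $E(\M)$ in the sporadic families. For families (1) and (2) the visible diagonal $S_m$ makes permutability immediate, and for the Paley family with $q \ge 7$ the order comparison $|\Aut(\Gamma)| < m!$ settles matters cleanly. The delicate points are confirming that the $q = 3$ graphs are exactly $C_6$ and $K_{2,2,2}$, and, for the Petersen graph, separating the abstract isomorphism $\Aut(\Gamma) \cong S_5$ (as a group of permutations of $10$ vertices) from the induced action on the $5$ edges of $\M$, which has order only $20$ and so is far from all of $S_5$.
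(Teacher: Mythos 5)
Your proposal is correct, and at the top level it follows the same route as the paper: a permutable perfect matching is in particular a $2$-transitive one, so Theorem \ref{thm:2transperfect} applies, cases (1) and (2) survive, and the real work is showing that cases (3) and (4) can carry a full induced $S_m$ only when $m=3$, which yields $C_6$ and $K_{2,2,2}$. Where you genuinely diverge is the sieve itself. The paper eliminates cases (3) and (4) with a single uniform structural fact: in every one of those graphs the group induced on the $m$ edges of $\M$ is a $2$-transitive group of affine type, i.e.\ it has an elementary abelian minimal normal subgroup acting regularly on the edges, and $m$ is odd; since for odd $m \ge 5$ the unique minimal normal subgroup of $S_m$ is the nonabelian simple group $A_m$, permutability forces $m=3$, with no appeal to automorphism group orders at all. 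Your sieve instead runs an order comparison $|\Aut(\Gamma)| < m!$ for the Paley family with $q \ge 7$, together with the explicit $F_{20}$ (respectively order-$20$) computations at $m=5$. This is valid, but two points need care. First, $\Aut(\Gamma)$ of the incidence graph is not a $2$-transitive group on vertices --- the graph is bipartite, and for $q>11$ even the design's automorphism group is only $2$-homogeneous on points; the $2$-transitivity appears only in the induced action on $\M$, so that phrase should be corrected, though your argument uses nothing but the order bound. Second, the estimate $O(q^2 \log q)$ glosses over the exceptional values $q=7$ and $q=11$, where the incidence graphs are Heawood-type graphs with automorphism groups of orders $336$ and $1320$, larger than the generic $q(q-1)f$; these are still far below $7!$ and $11!$, so your conclusion survives, but the inequality you actually need rests on the known (and nontrivial) determination of the automorphism groups of Paley designs, exceptional cases included. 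That dependence on classification data is precisely what the paper's minimal-normal-subgroup argument avoids, and is what its approach buys; what your approach buys is that it requires no structural information about the induced group beyond a crude order count, at the price of invoking those external facts.
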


In particular, Theorem \ref{thm:2transperfect} classifies the possible induced subgraphs on the vertex set of a $2$-transitive matching $\M$ of size $m$ in an arbitrary graph: either the induced subgraph is disconnected and is $m \cdot K_2$ (i.e., $m$ vertex-disjoint edges) or it is connected and is one of the graphs listed in Theorem \ref{thm:2transperfect}. Moreover, the induced subgraph on the vertex set of a permutable $m$-matching $\M$ in an arbitrary graph is either $m \cdot K_2$ or one of the graphs listed in Corollary \ref{cor:mpermperfect}.
%
%

\section{Background}
\label{sect:background}

In this section we review the terminology and theory that will be used in later sections.

Let $\Gamma$ be a graph.  Given a subset $X$ of the vertices of $\Gamma$, the induced subgraph of $\Gamma$ on $X$ is denoted by $\Gamma[X]$.  We denote the fact that the vertices $\alpha$ and $\beta$ are adjacent by writing $\alpha \sim \beta$. We denote by $\overline{\Gamma}$ the complement of $\Gamma$.  A \textit{walk} $W$ is defined to be a sequence of vertices $(\alpha_0, \alpha_1, \dots, \alpha_n)$ such that $\alpha_i \sim \alpha_{i+1}$ for $0 \le i \le n -1$.  For $\alpha \in V(\Gamma)$, we denote the set of neighbors of $\alpha$ in $\Gamma$ by $\Gamma(\alpha)$.  The \textit{degree} of a vertex $\alpha$ is $|\Gamma(\alpha)|$, and we say that the graph $\Gamma$ is \textit{regular} if every vertex has the same degree.

A graph $\Gamma$ is said to be $(v,k,\lambda, \mu)$-strongly regular if $\Gamma$ has $v$ vertices; $\Gamma$ is regular of degree $k$; if $\alpha, \beta \in V(\Gamma)$ and $\alpha \sim \beta$, then $|\Gamma(\alpha) \cap \Gamma(\beta)| = \lambda$; and if $\alpha \neq \beta \in V(\Gamma)$ and $\alpha \not\sim \beta$, then $|\Gamma(\alpha) \cap \Gamma(\beta)| = \mu$. 

\subsection{Permutation groups and graph symmetry}
\label{subsect:permgps}

Let $\Omega$ be a set and $G$ a group of permutations of $\Omega$, that is, let $G \le \Sym(\Omega)$.  For an element $\omega \in \Omega$, the orbit of $\omega$ under $G$ is denoted by $\omega^G$.  For a subset $\Delta$ of $\Omega$, we let $G_\Delta$ denote the setwise stabilizer of $\Delta$ in $G$.  When $\Delta = \{\omega\}$, a single element of $\Omega$, we write $G_\omega := G_{\{\omega\}}$.  If $\Delta = \{\omega_1, \omega_2, \dots, \omega_k\}$, then \[G_{\omega_1\omega_2 \dots \omega_k} := \bigcap_{i=1}^k \limits G_{\omega_i},\] that is, $G_{\omega_1\omega_2 \dots \omega_k}$ fixes every $\omega_i$.  For instance, $G_{\alpha\Delta}$ denotes $G_{\{\alpha\}}\cap G_{\Delta}$, i.e. the set of elements of $G$ which stabilize both the element $\alpha$ pointwise and the set $\Delta$ setwise.  If $H \le G_\Delta$, then we denote by $H^\Delta$ the induced action of $H$ on $\Delta$, i.e., $H^\Delta$ is the image of the natural homomorphism from $H$ into $\Sym(\Delta)$.  If $G$ is a group of permutations of $\Omega_1$ and $G^\prime$ is a group of permutations of $\Omega_2$, then $G$ and $G^\prime$ are said to be \textit{permutation isomorphic} if there are both a bijection $\psi : \Omega_1 \rightarrow \Omega_2$ and a group isomorphism $\phi: G \rightarrow G^\prime$ such that, for all $g \in G$ and $\omega \in \Omega_1$, $(\omega^g)^\psi = (\omega^\psi)^{g^\phi}$.

The group of permutations $G$ is said to be \textit{transitive} on $\Omega$ if, for every $\alpha, \beta \in \Omega$, there exists $g \in G$ such that $\alpha^g = \beta.$  A group $G$ of permutations of a set $\Omega$ is said to be \textit{regular} on $\Omega$ if $G$ is transitive on $\Omega$ and $G_\omega = 1$ for all $\omega \in \Omega$.  Additionally, $G$ is said to be \textit{primitive} on $\Omega$ if $G$ is transitive on $\Omega$ and $G$ preserves no nontrivial partition of $\Omega$, that is, $G$ preserves no partition of $\Omega$ other than the partition into singleton sets and the partition into the single set $\Omega$.  If $\Pi$ is a nontrivial $G$-invariant partition of $\Omega$, then $\Pi$ is called a \textit{system of imprimitivity} and the elements of $\Pi$ are called \textit{blocks}.  Finally, a group $G$ is said to be \textit{biprimitive} on $\Omega$ if $\Omega$ has a $G$-invariant partition $\Omega = \Delta_1 \cup \Delta_2$ such that the setwise stabilizer $G_{\Delta_i}$ is primitive on $\Delta_i$ for $i = 1,2$.

The group of permutations $G$ is said to be \textit{quasiprimitive} on the set $\Omega$ if every nontrivial normal subgroup of $G$ is transitive on $\Omega$.  If $G$ is primitive on $\Omega$, then $G$ is quasiprimitive on $\Omega$; however, the converse is not true.  A group $G$ is said to be \textit{biquasiprimitive} on $\Omega$ if $\Omega$ has a $G$-invariant partition $\Omega = \Delta_1 \cup \Delta_2$ such that the setwise stabilizer $G_{\Delta_i}$ is quasiprimitive on $\Delta_i$ for $i = 1,2$.

Let $\Gamma$ be a graph with vertex set $V(\Gamma)$ and edge set $E(\Gamma)$.  An \textit{automorphism} of a graph $\Gamma$ is a permutation of the vertices that preserves adjacency.  The set of automorphisms of $\Gamma$ forms a group, which is denoted by $\Aut(\Gamma)$.  Note that $\Aut(\Gamma) \le \Sym(V(\Gamma))$.

Let $G \le \Aut(\Gamma)$.  The graph $\Gamma$ is \textit{$G$-vertex-transitive} if $G$ is transitive on the vertices of $\Gamma$, and $\Gamma$ is \textit{$G$-edge-transitive} if $G$ is transitive on edges.  Similarly, the graph $\Gamma$ is \textit{$G$-vertex-quasiprimitive} (respectively, \textit{$G$-vertex-biquasiprimitive}) if $G$ is quasiprimitive (respectively, biquasiprimitive) on the vertices of $\Gamma$.  An \textit{arc} is an ordered pair of vertices $(\alpha, \beta)$ such that $\{\alpha, \beta\} \in E(\Gamma)$, and $\Gamma$ is \textit{$G$-arc-transitive} if $G$ is transitive on the set $A(\Gamma)$ of arcs of $\Gamma$.  More generally, an \textit{$s$-arc} of $\Gamma$ is an ordered $(s+1)$-tuple of vertices $(\alpha_0, \dots, \alpha_s)$ such that $\{\alpha_i, \alpha_{i+1} \} \in E(\Gamma)$ for $0 \le i \le s-1$ and $\alpha_{j-1} \neq \alpha_{j+1}$ for $1 \le j \le s-1$.  (Repeated vertices are allowed in the walk defined by the $s$-arc, but there are no returns in the walk.)  The graph $\Gamma$ is said to be \textit{$(G,s)$-arc-transitive} if $G$ is transitive on the set of $s$-arcs of $\Gamma$. 

Given vertices $\alpha, \beta$ of $\Gamma$, we define the \textit{distance} between $\alpha$ and $\beta$ to be the length of a shortest path between $\alpha$ and $\beta$ (measured in edges), and we denote the distance between $\alpha$ and $\beta$ by $d(\alpha, \beta)$.  Since we are only considering connected graphs, there will always exist a path between any two vertices $\alpha$ and $\beta$, so distance is a well-defined, finite-valued function on pairs of vertices.  Given a fixed vertex $\alpha$, for every natural number $i$ we let
\[G_\alpha^{[i]} := \{g \in G_\alpha : \beta^g = \beta \text{ for all } \beta \in V(\Gamma) \text{ such that } d(\alpha, \beta) \le i\},\]
that is, $G_\alpha^{[i]}$ is the group that fixes pointwise the set of all vertices at distance at most $i$ from $\alpha$.  In particular, 
\[G_\alpha^{[1]} = \{g \in G_\alpha : \beta^g = \beta \text{ for all } \beta \in \Gamma(\alpha)\}, \]
and $G_\alpha^{[1]}$ is often referred to as the \textit{kernel of the local action of $G$} since, for the induced action $G_\alpha^{\Gamma(\alpha)}$ of the vertex stabilizer $G_\alpha$ on the neighbors of $\alpha$, we have $G_\alpha^{\Gamma(\alpha)} \cong G_\alpha/G_\alpha^{[1]}.$  Finally, for vertices $\alpha_1, \alpha_2, \dots, \alpha_k$, we define \[G_{\alpha_1 \dots \alpha_k}^{[1]}:= \bigcap_{i=1}^k \limits G_{\alpha_i}^{[1]},\] that is, $G_{\alpha_1 \dots \alpha_k}^{[1]}$ is the pointwise stabilizer of the union of the $\Gamma(\alpha_{i})$.

Given a permutation group $L$, a graph $\Gamma$, $\alpha \in V(\Gamma)$, and $G \le \Aut(\Gamma)$ such that $\Gamma$ is $G$-vertex-transitive, the pair $(\Gamma, G)$ is said to be \textit{locally-$L$} if $G_{\alpha}^{\Gamma(\alpha)}$ is permutation isomorphic to $L$.  The graph $\Gamma$ is said to be \textit{$G$-locally primitive} if $G_\alpha^{\Gamma(\alpha)}$ is primitive on $\Gamma(\alpha)$.

\subsection{Normal quotient graphs, voltage graphs, and regular covers}
\label{subsect:quotientcover}


Let $\Gamma$ be a graph with transitive group of automorphisms $G$, and let $N$ be an intransitive normal subgroup of $G$.  The $N$-orbits of vertices of $\Gamma$ form a system of imprimitivity for $G$, and the \textit{normal quotient graph} $\Gamma_N$ with respect to the normal subgroup $N$ is the graph whose vertex set is the $N$-orbits of vertices, and two $N$-orbits $\alpha^N$ and $\beta^N$ are adjacent if and only if there is $\alpha^\prime \in \alpha^N$ and $\beta^\prime \in \beta^N$ such that $\alpha^\prime \sim \beta^\prime$.  The graph $\Gamma$ is said to be a \textit{regular cover} of $\Gamma_N$ if, given any two adjacent vertices $\alpha^N$ and $\beta^N$ in $\Gamma_N$, we have $|\Gamma(\alpha) \cap \beta^N| = 1$.

The following lemma is a well-known result, and it shows that local primitivity is a sufficient condition for the original graph to be a regular cover of the normal quotient graph.

\begin{lem}\cite[Theorem 10.4]{PraegerLiNiemeyer}
 Let $\Gamma$ be a $G$-vertex-transitive and $G$-locally primitive graph, where $G \le \Aut(\Gamma)$, and let $N$ be a normal subgroup of $G$ with more than two orbits on $V(\Gamma)$.  Then $\Gamma$ is a regular cover of the quotient graph $\Gamma_N$, and the quotient graph $\Gamma_N$ is $G/N$-vertex-transitive and $G/N$-locally primitive.
\end{lem}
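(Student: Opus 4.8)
The plan is to work directly with the system of imprimitivity given by the $N$-orbits on $V(\Gamma)$, which by definition are the vertices of $\Gamma_N$. Since $N \trianglelefteq G$, for any $g \in G$ and $\alpha \in V(\Gamma)$ one has $(\alpha^N)^g = (\alpha^g)^N$, so $G$ permutes the $N$-orbits; moreover each element of $N$ fixes every $N$-orbit setwise, so the induced action on the blocks factors through $G/N$. Transitivity of $G$ on $V(\Gamma)$ then immediately yields transitivity of $G/N$ on the blocks, giving the $G/N$-vertex-transitivity of $\Gamma_N$; this is the easy part.

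The heart of the argument is a local dichotomy coming from local primitivity. I would fix $\alpha \in V(\Gamma)$, let $B = \alpha^N$ be its block, and partition the neighbor set $\Gamma(\alpha)$ according to which block each neighbor lies in. Because $G_\alpha$ permutes the blocks, this is a $G_\alpha$-invariant partition of $\Gamma(\alpha)$, hence a $G_\alpha^{\Gamma(\alpha)}$-invariant partition. Since $\Gamma$ is $G$-locally primitive, this partition must be trivial, leaving exactly two possibilities: either all neighbors of $\alpha$ lie in a single block, or every neighbor lies in a distinct block. First I would rule out the degenerate alternative. If all neighbors of $\alpha$ lie in one block $B'$, then either $B' = B$, in which case each block is a union of connected components and connectedness forces $N$ to be transitive (a single orbit); or $B' \neq B$, in which case, by vertex-transitivity, every block is joined to exactly one other block, so $\Gamma_N$ is $1$-regular and connected, forcing $\Gamma_N \cong K_2$ and $N$ to have exactly two orbits. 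Both conclusions contradict the hypothesis that $N$ has more than two orbits, so we must be in the second case: each neighbor of $\alpha$ lies in a distinct block.

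It remains to package this as a regular cover and transfer local primitivity. In the surviving case, each block adjacent to $B$ meets $\Gamma(\alpha)$ in exactly one vertex; since $N$ is transitive on $B$ and every element of $N$ fixes all blocks setwise, the same count holds for every $\alpha \in B$ and, by vertex-transitivity, for every block, which is exactly the regular cover condition. For local primitivity of the quotient, I would exploit the $G_\alpha$-equivariant bijection $\beta \mapsto \beta^N$ from $\Gamma(\alpha)$ onto the neighborhood $\Gamma_N(B)$ in $\Gamma_N$. Using that $N$ is transitive on $B$, a Frattini-type argument gives $G_B = N G_\alpha$, so the stabilizer $(G/N)_B = G_B/N$ is precisely the image of $G_\alpha$; under the equivariant bijection this identifies $(G/N)_B^{\Gamma_N(B)}$ with $G_\alpha^{\Gamma(\alpha)}$ as permutation groups. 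Since the latter is primitive, so is the former, establishing $G/N$-local primitivity. I expect the main obstacle to be the bookkeeping in the dichotomy step, specifically verifying cleanly that each degenerate alternative collapses the number of $N$-orbits to at most two, since everything else is formal once the neighbor-to-block partition is shown to be $G_\alpha$-invariant.
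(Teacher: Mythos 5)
The paper itself gives no proof of this lemma; it is quoted as a known result from Praeger--Li--Niemeyer, so there is no in-paper argument to compare against. Your proof follows what is essentially the standard normal-quotient argument, and its overall structure is sound: the action on blocks factoring through $G/N$, the $G_\alpha$-invariant partition of $\Gamma(\alpha)$ according to blocks, the use of local primitivity to force that partition to be trivial, the elimination of the ``single cell'' alternatives via connectedness and the hypothesis that $N$ has more than two orbits, the Frattini argument $G_B = NG_\alpha$, and the equivariant identification of $(G/N)_B^{\Gamma_N(B)}$ with $G_\alpha^{\Gamma(\alpha)}$.

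There is, however, one step you never address, and it is needed for your final identification to make sense: you must rule out that some neighbor of $\alpha$ lies in $\alpha$'s own block $B$. The ``every neighbor in a distinct block'' branch of your dichotomy does not exclude this: a priori $\Gamma(\alpha) \cap B$ could be a singleton cell, in which case the map $\beta \mapsto \beta^N$ does not send $\Gamma(\alpha)$ into $\Gamma_N(B)$ (that neighbor maps to $B$ itself, which is not a neighbor of $B$ in the quotient, since the quotient graph has no loops), and the claimed permutation isomorphism between $(G/N)_B^{\Gamma_N(B)}$ and $G_\alpha^{\Gamma(\alpha)}$ breaks down. Fortunately the fix costs one line using tools you already deploy: since $g \in G_\alpha$ satisfies $(\alpha^N)^g = (\alpha^g)^N = \alpha^N$, the stabilizer $G_\alpha$ fixes $B$ setwise, so $\Gamma(\alpha) \cap B$ is a $G_\alpha$-invariant subset of $\Gamma(\alpha)$; primitivity includes transitivity of $G_\alpha^{\Gamma(\alpha)}$, so $\Gamma(\alpha) \cap B$ is either empty or all of $\Gamma(\alpha)$, and the latter is exactly the sub-case $B' = B$ that you already eliminated. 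Hence no neighbor of $\alpha$ lies in $B$, every block meeting $\Gamma(\alpha)$ is genuinely a neighbor of $B$ in $\Gamma_N$, and the rest of your argument goes through. A smaller remark in the same vein: your regular-cover step asserts ``exactly one'' neighbor of $\alpha$ in each adjacent block, but the dichotomy only yields ``at most one''; the ``at least one'' comes from combining adjacency of the block to $B$ with the constancy of the count across $B$ via $N$-transitivity, which you do invoke, so that step is fine once the pieces are assembled in the right order.
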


An equivalent definition of a regular cover is as follows.  A \textit{covering projection} $p: \tilde{\Gamma} \rightarrow \Gamma$ maps $V(\tilde{\Gamma})$ onto $V(\Gamma)$, preserving adjacency, such that for any vertex $\tilde{\alpha} \in V(\tilde{\Gamma})$, the set of neighbors of $\tilde{\alpha}$ is mapped bijectively onto the set of neighbors of $\tilde{\alpha}^p$.  For a vertex $\alpha$ of $\Gamma$, the set $\alpha^{p^{-1}}$ of vertices that are mapped onto $\alpha$ by $p$ is called the \textit{fiber} over the vertex $\alpha$.  An automorphism $g \in \Aut(\Gamma)$ \textit{lifts} to $\tilde{g} \in \Aut(\tilde{\Gamma})$ if the following diagram commutes:
\begin{center}
\begin{tikzpicture}
	\node (1) at (0,2){$\tilde{\Gamma}$};
	\node (2) at (2,2) {$\tilde{\Gamma}$};
	\node (3) at (0,0) {$\Gamma$};
	\node (4) at (2,0) {$\Gamma$};
	\draw[->] (1) to node [anchor=south] {$\tilde{g}$} (2);
 	\draw[->] (1) to node [anchor=east] {$p$} (3);
	\draw[->] (2) to node [anchor=west] {$p$} (4);
	\draw[->] (3) to node [anchor=north] {$g$} (4);
\end{tikzpicture}
\end{center}
The lift of the trivial group (identity) is known as the group of \textit{covering transformations} and is denoted $\CT(p)$.  The graph $\tilde{\Gamma}$ is a \textit{regular cover} of $\Gamma$ if $\CT(p)$ acts regularly on the set $\alpha^{p^{-1}}$ for all vertices $\alpha \in V(\Gamma).$

A \textit{voltage assignment} on a graph $\Gamma$ is a map $\xi:A(\Gamma) \rightarrow H$, where $H$ is a group, such that $(\alpha, \beta)^\xi = \left((\beta,\alpha)^\xi\right)^{-1}$, and a \textit{voltage graph} is a graph $\Gamma$ together with a voltage assignment.  For ease of notation, the voltage of the arc $(\alpha,\beta)$ will be denoted $\xi_{\alpha\beta}$, and $\xi_W$ will denote the total voltage of a walk $W$, that is, $\xi_W$ is the product (or sum, depending on the group operation) of the voltages of the edges in $W$.  The \textit{derived covering graph} $\tilde{\Gamma}$ of a voltage graph has vertex set $V(\Gamma) \times H$, where two vertices $(\alpha, h_1)$ and $(\beta, h_2)$ are adjacent iff $\alpha$ is adjacent to $\beta$ in $\Gamma$ and $h_2 = \xi_{\alpha\beta}h_1.$  The following theorem exhibits the deep connection between regular covers and derived covering graphs:

\begin{lem}[{\cite[Theorem 2.4.5, Section 2.5]{topol}}]
\label{thm:grosstucker}
Every regular cover $\tilde{\Gamma}$ of a graph $\Gamma$ is a derived cover of a voltage graph (and conversely).

In addition, suppose the voltage group is generated by the voltages assigned to the edges of $\Gamma$. If the edges of a (fixed but arbitrary) spanning tree of $\Gamma$ have the identity voltage, then $\tilde{\Gamma}$ is connected.
\end{lem}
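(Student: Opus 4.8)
The plan is to prove both directions by exhibiting an explicit isomorphism between a given regular cover and a derived covering graph built from a carefully chosen voltage assignment, taking the voltage group to be $H := \CT(p)$, the group of covering transformations. First I would install a coordinate system on $V(\tilde{\Gamma})$. Since $\tilde{\Gamma}$ is a regular cover, $H$ acts regularly on each fiber $\alpha^{p^{-1}}$; so after choosing once and for all a base vertex $\tilde{\alpha}_0$ in the fiber over each $\alpha \in V(\Gamma)$, every vertex of that fiber is uniquely $\tilde{\alpha}_0^h$ for some $h \in H$. This yields a bijection $\Phi: V(\Gamma) \times H \to V(\tilde{\Gamma})$, $\Phi(\alpha,h) = \tilde{\alpha}_0^h$, which I will promote to a graph isomorphism.

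Next I would define the voltage assignment. Given an arc $(\alpha, \beta) \in A(\Gamma)$, the projection $p$ maps the neighbors of $\tilde{\alpha}_0$ bijectively onto the neighbors of $\alpha$, so $\tilde{\alpha}_0$ has a unique neighbor lying over $\beta$; writing it as $\tilde{\beta}_0^{\,h}$, I set $\xi_{\alpha\beta} := h$. To confirm this is a legitimate voltage assignment I must verify $\xi_{\beta\alpha} = (\xi_{\alpha\beta})^{-1}$: applying the covering transformation $h^{-1}$ (an automorphism of $\tilde{\Gamma}$ commuting with $p$) to the edge $\{\tilde{\alpha}_0, \tilde{\beta}_0^{\,h}\}$ produces $\{\tilde{\alpha}_0^{\,h^{-1}}, \tilde{\beta}_0\}$, which identifies the unique neighbor of $\tilde{\beta}_0$ over $\alpha$ as $\tilde{\alpha}_0^{\,h^{-1}}$, giving $\xi_{\beta\alpha} = h^{-1}$. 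With voltages in hand I check that $\Phi$ is an isomorphism onto $\tilde{\Gamma}$: because $H$ acts by automorphisms, $\tilde{\alpha}_0^{h_1} \sim \tilde{\beta}_0^{h_2}$ iff $\tilde{\alpha}_0 \sim \tilde{\beta}_0^{h_2 h_1^{-1}}$, and since $\tilde{\alpha}_0$ has exactly one neighbor over $\beta$, this holds iff $h_2 h_1^{-1} = \xi_{\alpha\beta}$, i.e. iff $h_2 = \xi_{\alpha\beta} h_1$ --- precisely the adjacency rule of the derived graph. The converse direction is a short check: for any voltage assignment, right multiplication $(\alpha,h) \mapsto (\alpha, hg)$ preserves the rule $h_2 = \xi_{\alpha\beta}h_1$ and acts regularly on each fiber $\{\alpha\} \times H$, so the derived graph is itself a regular cover.

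For the connectivity statement I would assume the edge voltages generate $H$ and that a spanning tree $T$ has been assigned identity voltages, and show the component $C$ of $(\alpha_0, 1)$ is all of $\tilde{\Gamma}$. Following a walk $W$ in $\Gamma$ from $\alpha_0$ lifts to a path ending at level $\xi_W$, so $(\alpha_0, h) \in C$ exactly when $h = \xi_W$ for some closed walk $W$ based at $\alpha_0$. The set $S$ of such total voltages is closed under concatenation and reversal of walks, hence is a subgroup of $H$; and for any edge $(\alpha, \beta)$ of voltage $g$, the closed walk that runs from $\alpha_0$ to $\alpha$ through $T$, crosses $(\alpha,\beta)$, and returns from $\beta$ to $\alpha_0$ through $T$ has total voltage $g$, the tree portions contributing the identity. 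Thus $S$ contains every edge voltage, forcing $S = H$, so $(\alpha_0, h) \in C$ for every $h \in H$; since each tree edge joins $(\alpha, h)$ to $(\beta, h)$, the whole sheet at every level lies in $C$, giving $C = V(\tilde{\Gamma})$.

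The main obstacle I anticipate is bookkeeping rather than conceptual: keeping the left/right conventions consistent throughout, so that the regular $H$-action on fibers, the definition of $\xi_{\alpha\beta}$, and the adjacency rule $h_2 = \xi_{\alpha\beta}h_1$ all align on the same side of multiplication. Once the coordinates are fixed, the well-definedness of $\xi$ and the inverse-arc relation are the only places where the regular-cover hypothesis --- bijectivity of $p$ on neighborhoods together with regularity of $\CT(p)$ on fibers --- is genuinely used, and verifying these carefully is the crux.
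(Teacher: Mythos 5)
The paper offers no proof of this lemma to compare against: it is quoted directly from Gross and Tucker \cite{topol}, so your proposal has to be judged as a self-contained proof of that classical result, and as such it is essentially correct and follows the standard argument. The forward direction --- coordinatizing each fiber by the regular action of $H = \CT(p)$ after fixing base vertices, defining $\xi_{\alpha\beta}$ via the unique neighbor of $\tilde{\alpha}_0$ lying over $\beta$, checking the inverse-arc identity by applying the covering transformation $h^{-1}$, and matching the adjacency rule $h_2 = \xi_{\alpha\beta}h_1$ --- is exactly the textbook proof, and your conventions (right action on fibers, left multiplication by voltages) are consistent with the paper's stated adjacency rule for derived graphs. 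The connectivity argument is also correct: the total voltages of closed walks based at $\alpha_0$ form a subgroup, the identity-voltage spanning tree forces that subgroup to contain every edge voltage and hence to equal $H$, and the tree edges then tie the sheet at every level into the component of $(\alpha_0,1)$.

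The one step you should tighten is the converse. You exhibit the right-multiplication group, isomorphic to $H$, as a subgroup of $\CT(p)$ acting regularly on every fiber; but the paper's definition of regular cover demands that the \emph{full} group $\CT(p)$ act regularly, and your argument yields transitivity of $\CT(p)$ on fibers, not freeness. For disconnected derived graphs freeness can genuinely fail: take $\Gamma = K_2$ with voltage group $\Z_3$ and identity voltage on the edge; the derived graph is $3 \cdot K_2$ and $\CT(p) \cong S_3$, which is transitive but not free on each fiber, so this derived cover is not a regular cover under the paper's definition. The repair is connectivity: for a connected cover, a covering transformation fixing a vertex fixes each of its neighbors (each is the unique vertex in its fiber adjacent to the fixed vertex) and hence, propagating outward, fixes everything, so $\CT(p)$ coincides with your right-multiplication group. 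That hypothesis is available in the lemma's second paragraph and in every application in the paper (Constructions \ref{const:cover} and \ref{const:nearpolygcover}, where tree edges carry the identity and the voltages generate $H$); alternatively, state the converse using Gross--Tucker's own definition of a regular covering (quotient by a semiregular group of automorphisms), under which your argument is complete as written.
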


%

Fix a spanning tree $\mathcal{T}$ of a graph $\Gamma$. Choose $\alpha\in V(\Gamma)$, and assume that the edges of $\mathcal{T}$ have been assigned the identity voltage. This implies that the voltage assignment $\xi$ induces a natural homomorphism of the fundamental group of $\Gamma$ based at $\alpha$ (generated by all closed walks in $\Gamma$ based at $\alpha$) into the voltage group $H$. Let $g\in\Aut(\Gamma)$. For each closed walk $W$ based at $\alpha$, $W^{g}$ will be a closed walk based at $\alpha^{g}$. Moreover, the walk formed by the path in $\mathcal{T}$ from $\alpha$ to $\alpha^{g}$, followed by $W^{g}$, followed by the path in $\mathcal{T}$ from $\alpha^{g}$ back to $\alpha$, is a closed walk based at $\alpha$ with the same voltage as $W^{g}$. This induces a multivalued function $g^{\phi_{\alpha}}:H\to H$ given by $(\xi_W)^{g^{\phi_\alpha}} := \xi_{W^{g} }$.  This is not necessarily well-defined, as two walks $W_{1}$ and $W_{2}$ may have the same voltage while $W_{1}^{g}$ and $W_{2}^{g}$ may not. Furthermore, $g^{\phi_{\alpha}}$ may not be defined on all of $H$. With this in mind, the following lemma gives explicit criteria for an automorphism of a graph to lift.

\begin{lem}[{\cite[Propositions 3.1, 5.1]{elab}}]
\label{lem:lift} 
Fix a spanning tree $\mathcal{T}$ of a graph $\Gamma$ and $\alpha\in V(\Gamma)$. Assume the edges of $\mathcal{T}$ are assigned the identity voltage and that the voltage group $H$ is generated by the edge voltages of $\Gamma$. An automorphism $g$ of $\Gamma$ lifts to an automorphism $\tilde{g}$ of $\tilde{\Gamma}$ if and only if $g^{\phi_\alpha}$ is a group automorphism. Moreover, if $H$ is abelian, the automorphism $g^{\phi_\alpha}$ does not depend on the choice of base vertex $\alpha$.
\end{lem}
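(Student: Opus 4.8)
The plan is to prove both directions by tracking how any lift must act on the lifts of walks, using that $\tilde{\Gamma}$ is connected by Lemma~\ref{thm:grosstucker} (our hypotheses — the spanning tree $T$ carries the identity voltage and $H$ is generated by the edge voltages — are exactly the hypotheses of that lemma). The load-bearing preliminary observation is that, because the edges of $T$ are trivial and $H$ is generated by the edge voltages, every element of $H$ is realized as $\xi_W$ for some closed walk $W$ based at $\alpha$ (the fundamental cycles relative to $T$ generate $H$), and, again by connectivity, every vertex $(\beta,h)$ of $\tilde{\Gamma}$ is the terminus of the lift, started at $(\alpha,1)$, of some walk $W$ from $\alpha$ to $\beta$ with $\xi_W=h$. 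Throughout I will use the adjacency rule $h_2=\xi_{\alpha\beta}h_1$, so that the voltage of a concatenation multiplies in the reverse order of traversal.

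For the forward direction, suppose $g$ lifts to $\tilde{g}$. Since $\tilde{\Gamma}$ is connected and $\tilde{g}$ is a covering automorphism, $\tilde{g}$ is determined by the image of a single vertex; composing with a covering transformation I may assume $\tilde{g}(\alpha,1)=(\alpha^g,1)$. Lifting a walk $W$ from $\alpha$ to $\beta$ starting at $(\alpha,1)$ and applying $\tilde{g}$ then forces $\tilde{g}(\beta,\xi_W)=(\beta^g,\xi_{W^g})$. In particular, if $W_1,W_2$ are closed walks at $\alpha$ with $\xi_{W_1}=\xi_{W_2}$, they reach a common vertex, so $\tilde{g}$ sends it to a single vertex and $\xi_{W_1^g}=\xi_{W_2^g}$; hence $g^{\phi_\alpha}$ is well defined on all of $H$. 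Concatenation of closed walks commutes with $g$ and multiplies voltages, so $g^{\phi_\alpha}(\xi_{W_2}\xi_{W_1})=\xi_{W_2^g}\xi_{W_1^g}$, i.e.\ $g^{\phi_\alpha}$ is a homomorphism; applying the same analysis to the lift $\tilde{g}^{-1}$ of $g^{-1}$ produces its inverse, so $g^{\phi_\alpha}$ is an automorphism of $H$.

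For the converse, suppose $g^{\phi_\alpha}$ is a well-defined automorphism. I would define $\tilde{g}(\beta,h):=(\beta^g,\xi_{W^g})$, where $W$ is any walk from $\alpha$ to $\beta$ with $\xi_W=h$. Well-definedness is precisely the equivalent statement that trivial-voltage closed walks at $\alpha$ map under $g$ to trivial-voltage closed walks: given two candidate walks $W,W'$, the closed walk $W(W')^{-1}$ has voltage $\xi_{W'}^{-1}\xi_W=1$, so its image has trivial voltage and $\xi_{W^g}=\xi_{(W')^g}$. To check that $\tilde{g}$ preserves adjacency, suppose $(\beta,h)\sim(\gamma,k)$, so $k=\xi_{\beta\gamma}h$; appending the edge $(\beta,\gamma)$ to $W$ gives a walk to $\gamma$ of voltage $k$, and since $g$ preserves adjacency in $\Gamma$ with $\xi_{(\beta\gamma)^g}=\xi_{\beta^g\gamma^g}$, the images satisfy $\tilde{g}(\gamma,k)=(\gamma^g,\xi_{\beta^g\gamma^g}\xi_{W^g})$, which is adjacent to $\tilde{g}(\beta,h)=(\beta^g,\xi_{W^g})$. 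Finally $\tilde{g}$ is a bijection because the same construction applied to $g^{-1}$ with $(g^{\phi_\alpha})^{-1}$ yields a two-sided inverse, and this $\tilde{g}$ visibly covers $g$.

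For the last claim I would compute the dependence of $g^{\phi_\alpha}$ on the base vertex. Comparing $\alpha$ with $\alpha'$ through the tree path $Q$ from $\alpha$ to $\alpha'$ (which has trivial voltage), a closed walk $V$ based at $\alpha'$ corresponds to $QVQ^{-1}$ based at $\alpha$ with the same voltage, and applying $g$ gives $\xi_{(QVQ^{-1})^g}=t^{-1}\,\xi_{V^g}\,t$ with $t=\xi_{Q^g}$; thus $g^{\phi_\alpha}$ and $g^{\phi_{\alpha'}}$ differ by conjugation by $t\in H$, which is trivial exactly when $H$ is abelian. The main obstacle I expect is bookkeeping rather than ideas: keeping the group-operation order consistent with the adjacency rule and verifying that the re-basing tree paths genuinely preserve voltage. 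The genuinely essential inputs are the connectivity of $\tilde{\Gamma}$ and the well-definedness equivalence; the homomorphism and adjacency checks are then routine verifications.
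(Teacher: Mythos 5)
This lemma is one the paper does not prove at all: it is quoted verbatim from the literature (the citation to Propositions 3.1 and 5.1 of \cite{elab}), so there is no in-paper argument to compare against. Your proposal is, in substance, a correct self-contained proof of that cited lifting criterion, and it follows the standard line of argument: use connectivity of $\tilde{\Gamma}$ (via the fundamental-cycle observation that every element of $H$ is the voltage of a closed walk at $\alpha$, and that every vertex $(\beta,h)$ is reached by lifting some walk of voltage $h$), exploit uniqueness of walk-lifts to show a lift $\tilde{g}$ normalized by a covering transformation must satisfy $\tilde{g}(\beta,\xi_W)=(\beta^g,\xi_{W^g})$, deduce well-definedness and the homomorphism property of $g^{\phi_\alpha}$ from this formula, and conversely use that formula as the definition of $\tilde{g}$ when $g^{\phi_\alpha}$ is an automorphism. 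Your handling of the reverse-order multiplication convention forced by the adjacency rule $h_2=\xi_{\alpha\beta}h_1$ is consistent throughout, and the base-change computation $\xi_{(QVQ^{-1})^g}=t^{-1}\xi_{V^g}t$ with $t=\xi_{Q^g}$ correctly isolates why abelianness of $H$ kills the dependence on $\alpha$ (the key point being that $Q^g$ need not lie in $T$, so $t$ need not be trivial).

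Two small points worth tightening, neither fatal. First, in the converse, the claim that ``the same construction applied to $g^{-1}$ with $(g^{\phi_\alpha})^{-1}$ yields a two-sided inverse'' silently requires that $(g^{-1})^{\phi_\alpha}$ is itself well defined; this does follow, but only by invoking the injectivity of $g^{\phi_\alpha}$ (if a closed walk $U$ at $\alpha$ has $\xi_U=1$, then $g^{\phi_\alpha}(\xi_{U^{g^{-1}}})$ is conjugate to $\xi_U=1$, whence $\xi_{U^{g^{-1}}}=1$), so the automorphism hypothesis, not just well-definedness, is genuinely used there and the step deserves a sentence. Second, ``trivial exactly when $H$ is abelian'' overstates what is needed and what is true --- conjugation by a particular $t$ can be trivial in a nonabelian group --- but the lemma only asserts the implication in the abelian direction, which your computation establishes.
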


The following lemma also shows that it is quite possible to get the entire automorphism group of a graph to lift.

\begin{lem}[{\cite[Proposition 6.4, Theorem 5.2]{elab}}]
\label{lem:allautlifts}
Let $\Gamma$ be a graph with edge set $E$, and let $T$ denote the set of edges of a spanning tree $\mathcal{T}$ of $\Gamma$.   Let $\Z_p$ denote the cyclic group of order $p$, where $p$ is a prime. Let $H:= \Z_p^{|E| - |T|}$; $H$ is a $\Z_{p}$-vector space. Let $X$ be a basis for $H$, so $|X|=|E| - |T|$.  Define $\Gamma_p$ to be the derived regular cover of the voltage graph defined by assigning a distinct element of $X$ to each co-tree edge of $\Gamma$.  Then $\Gamma_p$ is well-defined, unique up to graph isomorphism, and $\Aut(\Gamma)$ lifts.  Moreover, the induced mapping $\phi: \Aut(\Gamma) \rightarrow \Aut(H)$ is a group homomorphism.
\end{lem}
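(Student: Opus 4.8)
The plan is to route everything through the homomorphism that the voltage assignment induces on the cycle space, and then apply the lifting criterion of Lemma \ref{lem:lift}. Write $n := |E| - |T|$, so $H = \Z_p^n$ and $X = \{x_1, \dots, x_n\}$ is a basis. Since $p$ is prime, $\Z_p$ is a field and the cycle space $Z(\Gamma)$ of $\Gamma$ over $\Z_p$ is an $n$-dimensional $\Z_p$-vector space, having as a basis the fundamental cycles $\{C_e : e \text{ a co-tree edge}\}$ determined by $T$. First I would observe that, because the tree edges carry the identity voltage and each co-tree edge $e$ carries the distinct basis element assigned to it, the voltage of the fundamental cycle $C_e$ is exactly that basis element. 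Hence the voltage assignment induces a $\Z_p$-linear map $f : Z(\Gamma) \to H$ carrying the basis of fundamental cycles bijectively onto $X$; that is, $f$ is a \emph{vector-space isomorphism}. This single observation drives everything that follows. (As $H$ is abelian of exponent $p$, the voltage of any closed walk $W$ depends only on its class $[W] \in Z(\Gamma)$, and $\xi_W = f([W])$.)

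For connectivity and well-definedness, I would argue as follows. Since the voltages generate $H$ and $T$ carries the identity voltage, Lemma \ref{thm:grosstucker} shows $\Gamma_p$ is connected. For uniqueness up to isomorphism, note that replacing $X$ by another basis, or $T$ by another spanning tree, merely replaces $f$ by another isomorphism $f' : Z(\Gamma) \to H$, and any two such isomorphisms satisfy $f' = \theta \circ f$ for a unique $\theta \in \Aut(H)$. Two voltage assignments into the same abelian group whose induced maps on $Z(\Gamma)$ differ by an automorphism of $H$ yield isomorphic derived covers: post-composing the voltages by $\theta$ relabels each fibre by $\theta$, while two assignments inducing the \emph{same} map on $Z(\Gamma)$ differ by a coboundary and so give covers isomorphic via a fibre-translation. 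Therefore $\Gamma_p$ is independent, up to graph isomorphism, of the choices of $T$ and $X$.

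The main point is that $\Aut(\Gamma)$ lifts. By Lemma \ref{lem:lift} it suffices to show that for every $g \in \Aut(\Gamma)$ the map $g^{\phi_\alpha}$ is a group automorphism of $H$ (and, $H$ being abelian, independent of $\alpha$). A graph automorphism permutes the edges, acts $\Z_p$-linearly on the edge space, and carries cycles to cycles, so it restricts to a linear automorphism $g_* \in \Aut(Z(\Gamma))$. I would then verify the identity $g^{\phi_\alpha} = f \circ g_* \circ f^{-1}$: for a closed walk $W$ at $\alpha$ with class $[W]$, its voltage is $\xi_W = f([W])$; the walk defining $g^{\phi_\alpha}$ is $W^g$ composed with tree paths between $\alpha$ and $\alpha^g$, and since those tree paths carry the identity voltage its voltage is $\xi_{W^g} = f([W^g]) = f(g_*[W])$. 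Thus $g^{\phi_\alpha}(f([W])) = f(g_*[W])$, i.e. $g^{\phi_\alpha} = f g_* f^{-1}$. As a composite of $\Z_p$-linear isomorphisms this lies in $\Aut(H)$, so $g$ lifts; since $g$ was arbitrary, all of $\Aut(\Gamma)$ lifts.

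I expect the main obstacle to be the bookkeeping behind well-definedness across different spanning trees: unlike a change of basis, two spanning trees produce voltage assignments that differ genuinely edge-by-edge, so the fibre-relabeling isomorphism is not immediate and one must invoke the classification of connected regular covers with fixed abelian voltage group by their induced map on $Z(\Gamma)$ modulo $\Aut(H)$ (equivalently, up to switching/coboundary equivalence). By contrast, the lifting statement—the arithmetic heart of the lemma—falls out cleanly once $f$ is recognized as an isomorphism: this is exactly what removes the potential multivaluedness of $g^{\phi_\alpha}$ flagged before Lemma \ref{lem:lift}, since $f$ is then a bijection onto all of $H$.
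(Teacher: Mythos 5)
The paper itself gives no proof of this lemma---it is quoted directly from the cited reference \cite[Proposition 6.4]{elab}---so there is no internal argument to compare yours against; your proposal stands or falls on its own, and it stands. Your central observation is exactly the right one (and is the idea behind the cited result): because the tree edges carry trivial voltage and the co-tree edges carry a basis of $H$, the voltage assignment induces a $\Z_p$-linear \emph{isomorphism} $f$ from the cycle space $Z(\Gamma)$ onto $H$ sending fundamental cycles to basis vectors. This single fact makes $g^{\phi_\alpha}$ totally defined and single-valued (walks with equal voltage have equal cycle-space class, hence so do their images under $g$), and identifies $g^{\phi_\alpha} = f \circ g_* \circ f^{-1}$ as a composition of linear isomorphisms, so Lemma \ref{lem:lift} applies to every $g \in \Aut(\Gamma)$. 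Your treatment of well-definedness is also sound: post-composition of the voltages by $\theta \in \Aut(H)$ relabels fibres, two assignments inducing the same map on $Z(\Gamma)$ differ by a coboundary $\delta c$ and the covers are identified by $(v,h) \mapsto (v, h + c(v))$, and any two admissible choices of tree, basis, and edge-labelling differ by a combination of these two moves. The only stylistic remark is that your closing paragraph frames the spanning-tree independence as requiring an appeal to a ``classification'' of covers; in fact the coboundary argument you already gave in your second paragraph is that proof in full, so no external classification is needed.
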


\subsection{Near-polygonal graphs}
\label{subsect:nearpolygonal}

Following \cite{perkel:near}, we say that $\Gamma$ is a \textit{near-polygonal graph} if there exists a distinguished set of $c$-cycles $\mathcal{C}$ such that every $2$-path of $\Gamma$ is contained in a unique cycle in $\mathcal{C}$.  If $c$ is the girth of $\Gamma$, then $\Gamma$ is called a \textit{polygonal graph}.  Furthermore, if our collection $\mathcal{C}$ of $c$-cycles is in fact the set of all cycles of length $\text{girth}(\Gamma)$, then $\Gamma$ is called \textit{strict polygonal.}  Manley Perkel invented the notion of a polygonal graph in \cite{perkel:thesis} and that of a near-polygonal graph in \cite{perkel:near}.  (Perkel's original definition of near-polygonal graphs required that the length $c$ of the special cycles be greater than $3$. In our definition, we allow $c=3$.)  

Polygonal graphs are a natural generalization of the edge- and vertex-set of polygons and some Platonic solids (such as the cube and dodecahedron), and one immediately notes that these are themselves strict polygonal graphs, with the special set of cycles being the polygon itself or the faces of the solid, respectively.  The complete graph on $n$ points, $K_n$, is a strict polygonal graph of girth 3, and the Petersen graph is a polygonal graph of girth 5 that is not a strict polygonal graph \cite{seress:survey}.  Very few examples of polygonal graphs are known; see \cite{nearpolyg10, swartz:odd, swartz:even}.

Near-polygonal graphs have appeared in the past when studying quotient graphs of symmetric graphs \cite{ZhouAlmostCovers, ZhouAlmostCoversErratum}.  We mention here the following result, which gives a sufficient condition for a graph $\Gamma$ to be near-polygonal:

\begin{lem}[{\cite[Theorem 1]{ZhouNearPolyg}}]
\label{lem:zhounear}
 Suppose that $\Gamma$ is a connected $(G,2)$-arc-transitive graph, where $G \le \Aut(\Gamma)$.  Let $(\alpha, \beta, \gamma)$ be a $2$-arc of $\Gamma$ and define $H:= G_{\alpha \beta \gamma}$.  Then the following are equivalent:
 \begin{itemize}
  \item[(i)] there exist both an integer $c \ge 3$ and a $G$-orbit $\mathcal{C}$ on $c$-cycles of $\Gamma$ such that $\Gamma$ is a near-polygonal graph with set of distinguished cycles $\mathcal{C}$;
  \item[(ii)] $H$ fixes at least one vertex in $\Gamma(\gamma) \backslash \{\beta\}$;
  \item[(iii)] there exists $g \in N_G(H)$ such that $(\alpha, \beta)^g = (\beta, \gamma)$.
 \end{itemize}
\end{lem}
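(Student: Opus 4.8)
The final statement to prove is Lemma \ref{lem:zhounear}, which establishes the equivalence of three conditions for a $(G,2)$-arc-transitive graph to be near-polygonal. Although this is cited as a result of Zhou, I will sketch a self-contained proof strategy.

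\textbf{Overall approach.} The plan is to prove the three equivalences in a cycle, establishing (i)$\Rightarrow$(ii)$\Rightarrow$(iii)$\Rightarrow$(i). The central object is the group $H = G_{\alpha\beta\gamma}$, the pointwise stabilizer of the $2$-arc $(\alpha,\beta,\gamma)$. Since $\Gamma$ is $(G,2)$-arc-transitive, every $2$-arc is equivalent under $G$, so any distinguished cycle through $(\alpha,\beta,\gamma)$ determines, and is determined by, the orbit structure relative to $H$. The key conceptual link is that a $G$-orbit $\mathcal{C}$ of $c$-cycles covering every $2$-path uniquely corresponds to a $G$-invariant way of "continuing" each $2$-arc to a distinguished next vertex, and this continuation is governed precisely by which vertices $H$ fixes and by the normalizer $N_G(H)$.

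\textbf{Step (i)$\Rightarrow$(ii).} First I would assume $\Gamma$ is near-polygonal with distinguished $G$-orbit $\mathcal{C}$ of $c$-cycles. The unique cycle $C \in \mathcal{C}$ containing the $2$-path $\alpha \sim \beta \sim \gamma$ determines a fourth vertex $\delta \in \Gamma(\gamma) \setminus \{\beta\}$, namely the successor of $\gamma$ along $C$. Because $\mathcal{C}$ is $G$-invariant and the cycle through a $2$-path is unique, any element of $G$ fixing the $2$-arc $(\alpha,\beta,\gamma)$ must fix the cycle $C$ setwise, hence must fix $\delta$ (the uniquely determined next vertex). Therefore $H = G_{\alpha\beta\gamma}$ fixes $\delta \in \Gamma(\gamma)\setminus\{\beta\}$, giving (ii).

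\textbf{Step (ii)$\Rightarrow$(iii) and (iii)$\Rightarrow$(i).} For (ii)$\Rightarrow$(iii), suppose $H$ fixes some $\delta \in \Gamma(\gamma)\setminus\{\beta\}$. By $(G,2)$-arc-transitivity there exists $g \in G$ with $(\alpha,\beta)^g = (\beta,\gamma)$; the hard part is arranging that such a $g$ can be chosen to normalize $H$. I expect the argument to run by comparing stabilizers: the element $g$ shifts the $2$-arc one step along the would-be cycle, and the condition that $H$ fixes a consistent successor $\delta$ forces $g^{-1}Hg$ and $H$ to coincide (both equal the pointwise stabilizer of the relevant string of vertices), yielding $g \in N_G(H)$. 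For (iii)$\Rightarrow$(i), given $g \in N_G(H)$ with $(\alpha,\beta)^g=(\beta,\gamma)$, I would iterate $g$ to generate a closed walk $\alpha, \beta, \gamma, \gamma^g, \dots$ returning to $\alpha$; the order $c$ of the induced cyclic motion produces a $c$-cycle, and taking its $G$-orbit $\mathcal{C}$ gives a family of cycles. The normalizing condition guarantees that $H$ acts consistently so that each $2$-path lies in exactly one such cycle, establishing the near-polygonal property.

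\textbf{Main obstacle.} The subtle point is verifying uniqueness in (iii)$\Rightarrow$(i): showing that the $G$-orbit of the single cycle built from powers of $g$ covers every $2$-path \emph{exactly once}. This requires carefully tracking how the normalizer condition $g \in N_G(H)$ controls the $H$-action on successors and ensures no $2$-path is covered twice. I anticipate this counting/uniqueness argument, rather than the mere existence of cycles, to be where the real work lies.
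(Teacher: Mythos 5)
The paper itself does not prove this lemma: it is imported verbatim from Zhou (the citation \cite{ZhouNearPolyg}, Theorem 1) and used as a black box, so your sketch can only be judged on its own merits. Your (i)$\Rightarrow$(ii) is correct and complete: uniqueness of the distinguished cycle $C$ through the $2$-path $\{\alpha,\beta,\gamma\}$ forces $C^h = C$ for every $h \in H$, and since $h$ fixes $\beta$ and $\gamma$ it must fix the second neighbour of $\gamma$ on $C$. Your (ii)$\Rightarrow$(iii) has the right idea but needs one precision to be an argument at all: you must choose $g$ with $(\alpha,\beta,\gamma)^g = (\beta,\gamma,\delta)$, where $\delta$ is the vertex fixed by $H$, not merely $(\alpha,\beta)^g = (\beta,\gamma)$; then $H^g = G_{\beta\gamma\delta} \ge H$ because $H$ fixes $\beta$, $\gamma$ and $\delta$, and finiteness gives $H^g = H$. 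For an arbitrary $g$ with $(\alpha,\beta)^g = (\beta,\gamma)$ the conclusion $g \in N_G(H)$ is false in general, since $\gamma^g$ need not be $H$-fixed.

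The genuine gap is in (iii)$\Rightarrow$(i), exactly at the step you defer, and the difficulty is not where you locate it. Let $C$ be the cycle traced by the $\langle g\rangle$-orbit of $\alpha$ and let $\mathcal{C} = C^G$. The ``$H$-action on successors'' that you propose to track is the easy half: since $g$ normalizes $H$, for $h \in H$ one has $\alpha^{g^i h} = \alpha^{(g^i h g^{-i})g^i} = \alpha^{g^i}$, so $H$ fixes $C$ pointwise; consequently, if $D \in \mathcal{C}$ contains the $2$-path $\{\alpha,\beta,\gamma\}$ and can be written $D = C^y$ with $(\alpha,\beta,\gamma)^y = (\alpha,\beta,\gamma)$ after composing $y$ with a power of $g$, then $y \in H$ and $D = C$. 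What this argument cannot exclude is a second cycle of $\mathcal{C}$ through the same $2$-path coming from the \emph{reversed} $2$-arc: $2$-arc-transitivity supplies $x \in G$ with $(\alpha,\beta,\gamma)^x = (\gamma,\beta,\alpha)$, and $C^x \in \mathcal{C}$ again contains $\{\alpha,\beta,\gamma\}$, but nothing in your sketch shows $C^x = C$. Concretely, the pointwise stabilizer of $C$ equals $H$, so $G_C/H$ embeds in the dihedral group of order $2c$ and contains the rotation subgroup generated by the image of $g$; double counting incidences shows each $2$-path lies in exactly $2c|H|/|G_C|$ cycles of $\mathcal{C}$, which is $1$ precisely when $G_C$ contains an orientation-reversing element and $2$ otherwise. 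Producing such a reversing element (or re-choosing $g$ inside the coset $\bigl(N_G(H)\cap G_\alpha \cap G_\beta\bigr)g$ so that one exists) is the real content of the implication; your proposal neither isolates this orientation issue nor gives the argument, so the theorem is not proved.
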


%
%

\section{Constructions of graphs with a permutable matching}
\label{sect:constructions}

In this section, we provide some constructions of graphs with permutable matchings.  We begin with a construction that shows that, for any $m \ge 2$, there are graphs that are neither edge- nor even vertex-transitive that contain a permutable $m$-matching.  

\begin{const}
\label{const:star}
  Let $\Gamma$ be a graph with a vertex $\alpha$ of degree $m$ and a group of automorphisms $G$ such that $G_{\alpha}^{\Gamma(\alpha)}\cong S_m$.  Define a new graph $Q\Gamma$ to be the graph obtained by subdividing every edge of $\Gamma$ into a path of length $2$.
\end{const}

It is not difficult to see that $Q\Gamma$ contains a permutable $m$-matching.  In particular, if $\Gamma = K_{1,m}$, then $\Aut(Q\Gamma) \cong S_m$ and $\Aut(Q\Gamma)$ has three orbits on vertices and two orbits on edges; the orbit of edges that do not all share a common endpoint is a permutable $m$-matching.  

Obviously, it is possible to construct other such examples; we mention another couple here.

\begin{const}
\label{const:subdivnotM}
 Let $\Gamma$ be a graph with a permutable matching $\M$.  Let $Q_{\M}\Gamma$ be the graph obtained by subdividing every edge not in $\M$ into a path of length $2$.  
\end{const}

\begin{const}
 \label{const:subdivMtwice}
 Let $\Gamma$ be a graph with a permutable matching $\M$.  Let $Q^{\M}\Gamma$ be the graph obtained by subdividing every edge in $\M$ into a path of length $3$.
\end{const}

The graphs produced from these constructions may have less symmetry than the original graphs; for instance, these constructions may take vertex-, edge-, or arc-transitive graphs and produce graphs that are not vertex-, edge-, or arc-transitive.  For this reason, we will henceforth restrict ourselves to graphs $\Gamma$ containing a $G$-permutable matching that are also $G$-arc-transitive.  Perhaps the most obvious examples of graphs with permutable $m$-matchings are also examples of vertex-biprimitive graphs with permutable $m$-matchings.

\begin{prop}
 \label{prop:compbip}
 For every $m \ge 2$, the complete bipartite graph $K_{m,m}$ has degree $m$ and there exists $G \le \Aut(K_{m,m})$ such that $K_{m,m}$ is $G$-vertex-biprimitive and $K_{m,m}$ contains a $G$-permutable $m$-matching.
\end{prop}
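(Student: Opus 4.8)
The plan is to exhibit an explicit group $G \le \Aut(K_{m,m})$ witnessing all three claims simultaneously. Label the two parts of the bipartition as $U = \{u_1, \dots, u_m\}$ and $W = \{w_1, \dots, w_m\}$, and take $\M = \{\{u_i, w_i\} : 1 \le i \le m\}$, which is a perfect matching since every vertex lies on exactly one of its edges. The degree claim is immediate: every vertex of $K_{m,m}$ is adjacent to all $m$ vertices in the opposite part. So the real content is to produce a group of automorphisms that is vertex-biprimitive and induces $S_m$ on $\M$.

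First I would define the group. Let $\sigma \in \Sym(U \cup W)$ be the automorphism that, for a given $\pi \in S_m$, sends $u_i \mapsto u_{\pi(i)}$ and $w_i \mapsto w_{\pi(i)}$; since $K_{m,m}$ has every cross-edge present, any such $\sigma$ preserves adjacency, so the diagonal copy $D \cong S_m$ of $S_m$ acting identically on the indices of both parts lies in $\Aut(K_{m,m})$. Let $\tau$ be the ``swap'' automorphism $u_i \mapsto w_i$, $w_i \mapsto u_i$, which is again an automorphism of $K_{m,m}$ and which stabilizes each edge of $\M$. Set $G := \langle D, \tau \rangle \cong S_m \times \langle \tau \rangle$ (or $S_m \wr \langle \tau \rangle$ depending on how one bookkeeps it — I would just work with $G = D \times \langle \tau\rangle$, which suffices). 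The bipartition $\{U, W\}$ is clearly $G$-invariant, giving the candidate partition $V(K_{m,m}) = U \cup W$ required by the definition of biprimitivity.

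Next I would verify the two structural properties. For the permutable matching: $D$ maps the edge $\{u_i, w_i\}$ to $\{u_{\pi(i)}, w_{\pi(i)}\}$, so the induced action $D^{E(\M)}$ realizes the full symmetric group on the $m$ edges of $\M$; hence $G_\M^{E(\M)} \cong S_m$ and $\M$ is a $G$-permutable $m$-matching. For biprimitivity I must check that the setwise stabilizer $G_U$ acts primitively on $U$ (and symmetrically on $W$, which follows by the symmetry induced by $\tau$). Here $G_U$ contains $D$, whose action on $U$ is the natural action of $S_m$ on $m$ points, which is $2$-transitive and hence primitive for all $m \ge 2$. Therefore $G_U^U$ is primitive on $U$, and likewise $G_W^W$ is primitive on $W$, so $K_{m,m}$ is $G$-vertex-biprimitive.

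I expect essentially no serious obstacle here, since $K_{m,m}$ is highly symmetric and the natural action of $S_m$ is already primitive; the only thing to be careful about is the bookkeeping distinction between the two labellings of the parts and ensuring the group element $\tau$ that interchanges $U$ and $W$ genuinely fixes $\M$ edgewise (which it does, as it fixes each $\{u_i, w_i\}$ setwise). A minor point to confirm is that the matching $\M$ is indeed stabilized setwise by all of $G$ — $D$ permutes the edges of $\M$ among themselves and $\tau$ fixes each one, so $G_\M = G$ and the induced action computation above is valid. One could alternatively invoke that $\Aut(K_{m,m}) \cong S_m \wr S_2$ and select $G$ inside it, but giving the generators explicitly keeps the verification self-contained and transparent.
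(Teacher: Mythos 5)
Your proposal is correct and takes essentially the same approach as the paper: the paper simply takes $G = \Aut(K_{m,m}) \cong S_m \wr S_2$, notes it preserves the bipartition, and observes that any perfect matching is then permutable. The group $D \times \langle \tau \rangle \cong S_m \times S_2$ you construct explicitly is exactly the setwise stabilizer of $\M$ inside that full automorphism group, so your verification spells out the same underlying facts (the diagonal $S_m$ permutes the matching edges arbitrarily and acts primitively on each part) in more detail.
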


\begin{proof}
 We take $G = \Aut(K_{m,m}) \cong S_m \Wr S_2$.  The group $G$ preserves the partition of the vertices into two sets of size $m$, and any perfect matching will be a permutable $m$-matching.
\end{proof}

Inspired by the example of complete bipartite graphs, the following construction demonstrates that it is quite easy to construct arc-transitive graphs with permutable matchings for any $m$:

\begin{const}
\label{const:Kmm}
Let $\Gamma$ be an arc-transitive graph with automorphism group $H$ and let $m$ be any fixed natural number.  Define $\Gamma(m)$ as the \emph{lexicographical product} of $\Gamma$ with $\overline{K}_{m}$: that is, $V(\Gamma(m)) = \{ (\eta, i) : \eta \in V(\Gamma), \space 1 \le i \le m \}$, with $(\eta, i)$ adjacent to $(\theta, j)$ if and only if $\eta$ is adjacent to $\theta$ in $\Gamma$.
\end{const}

If $\Gamma(m)$ is constructed from an $H$-arc-transitive graph $\Gamma$ as in Construction \ref{const:Kmm} with $H=\Aut(\Gamma)$, then $S_m \Wr H \le \Aut(\Gamma(m))$. For $G:= S_m \Wr H$, $\Gamma(m)$ is $G$-arc-transitive, and, for any edge $\{\alpha, \beta\}$ in $\Gamma$, the set $\M:= \{ \{(\alpha, i), (\beta, i)\} : 1 \le i \le m \}$ is a $G$-permutable $m$-matching of $\Gamma(m)$. 

Another construction which yields infinitely many such graphs from a $G$-arc-transitive graph $\Gamma$ with a $G$-permutable $m$-matching is the following.

\begin{const}
\label{const:cover}
Let $\Gamma$ be a $G$-arc-transitive graph with a $G$-permutable $m$-matching $\M = \{ (\alpha_i, \beta_i) : 1 \le i \le m \}$. Let $E$ denote the edge set of $\Gamma$ and let $T$ denote the set of edges of a spanning tree of $\Gamma$ that contains each of the edges of $\M$.  Let $\Z_p$ denote the cyclic group of order $p$, where $p$ is a prime. Let $H:= \Z_p^{|E| - |T|}$; $H$ is a $\Z_{p}$-vector space. Let $X$ be a basis for $H$, so $|X|=|E| - |T|$.  Define $\Gamma_p$ to be the derived regular cover of the voltage graph defined by assigning a distinct element of $X$ to each co-tree edge of $\Gamma$.  
\end{const}

By Lemma \ref{lem:allautlifts}, if $\Gamma$ is a $G$-arc-transitive graph with $G$-permutable $m$-matching $\M = \{ \{\alpha_i, \beta_i\} : 1 \le i \le m \}$, then $G$ lifts to a group $\widetilde{G}$ of automorphisms of $\Gamma_p$, and it follows that 
\[\M_p := \{ \{(\alpha_i, 1), (\beta_i, 1) \} : 1 \le i \le m \} \]
is itself a $\widetilde{G}$-permutable $m$-matching of $\Gamma_p$.

What last these two constructions have in common is that the graphs that are produced are not quasiprimitive on vertices: in each case, the full automorphism group of the graph produced contains an intransitive normal subgroup.  Moreover, the groups $G$ chosen above for the graphs arising from Construction \ref{const:Kmm} are always locally imprimitive.  It makes sense, then, to study the $G$-arc-transitive graphs that have $G$-permutable matchings that are $G$-vertex-quasiprimitive or $G$-vertex-biquasiprimitive.  Indeed, such graphs exist.  The \textit{odd graph} $O_n$ has one vertex for each of the $(n-1)$-element subsets of a $(2n-1)$-element set, and vertices are adjacent if and only if the corresponding subsets are disjoint.  As the following result shows, there is at least one vertex-quasiprimitive (and, in fact, vertex-primitive) graph with a permutable $m$-matching for every $m \ge 3$.

\begin{thm}
\label{thm:oddgraphs}
For every $m \ge 3$, the odd graph $O_m$ has degree $m$ and there exists $G \le \Aut(O_m)$ such that $G \cong S_{2m-1}$, $O_m$ is $G$-vertex-primitive, and $O_m$ contains a $G$-permutable $m$-matching.  
\end{thm}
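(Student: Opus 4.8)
\textbf{Proof proposal for Theorem \ref{thm:oddgraphs}.}

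The plan is to exhibit a concrete permutable $m$-matching in $O_m$ and then verify that the setwise stabilizer of this matching in $G \cong S_{2m-1} = \Sym(\{1, 2, \dots, 2m-1\})$ induces the full symmetric group $S_m$ on the $m$ edges. First I would recall the standard facts: the full automorphism group of $O_m$ is $\Sym(\Omega)$ for the $(2m-1)$-element ground set $\Omega$, acting naturally on the $(m-1)$-subsets, and this action is primitive (the $(m-1)$-subsets of a $(2m-1)$-set form a primitive $G$-set, since the point stabilizer $S_{m-1} \times S_m$ is maximal in $S_{2m-1}$ for these parameters). This immediately gives $G$-vertex-primitivity and $G \cong S_{2m-1}$, disposing of two of the three claims; the degree is $m$ because each $(m-1)$-subset $A$ has exactly $\binom{(2m-1)-(m-1)}{m-1} = \binom{m}{m-1} = m$ disjoint $(m-1)$-subsets (one for each element of the $m$-element complement that we omit).

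The heart of the argument is constructing the matching. The idea is to use a symmetric, ``rotationally balanced'' configuration so that a large subgroup visibly permutes its edges. I would label $\Omega = \{1, 2, \dots, 2m-1\}$ and, for each $i \in \{1, \dots, m\}$, define a pair of adjacent vertices whose union (as $(m-1)$-sets) covers a controlled portion of $\Omega$, arranging the $m$ edges so that they are pairwise vertex-disjoint and so that an $S_m$-subgroup of $G$ acts on them as $S_m$. Concretely, one natural choice partitions roughly the ground set using the $m$ ``special'' coordinates indexed by $\{1, \dots, m\}$ (or by a fixed $m$-subset), letting the $i$-th edge be the pair of complementary $(m-1)$-sets determined by omitting the $i$-th special element, with the remaining $m-1$ coordinates distributed among the edges' vertices; the copy of $\Sym(\{1, \dots, m\})$ permuting the special coordinates then permutes the $m$ edges faithfully and fully, establishing $G_{\M}^{E(\M)} \cong S_m$.

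The main obstacle I anticipate is verifying that the $2m$ vertices appearing in the matching are genuinely distinct $(m-1)$-subsets that are pairwise disjoint exactly along the intended edges (i.e., that each vertex lies in only one edge and that no two vertices from different edges are accidentally adjacent in a way that would interfere, although adjacency between matching edges is permitted in a matching as long as the set $\M$ is setwise stabilized as claimed). More precisely, the delicate bookkeeping is to ensure simultaneously that (a) the two endpoints of each edge are disjoint $(m-1)$-sets, (b) all $2m$ endpoints are distinct, and (c) the permutation action of the chosen $S_m$-subgroup on $\M$ is transitive on ordered pairs of edges and faithful. Once a clean symmetric description of $\M$ is fixed, (a)--(c) reduce to elementary counting with subsets of $\Omega$, so I would present the explicit coordinate description and check these three conditions directly. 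Finally, I would remark that since $G \cong S_{2m-1}$ already acts primitively on vertices, no separate verification of quasiprimitivity is needed, as primitivity implies it.
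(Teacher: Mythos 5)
Your proposal is correct and takes essentially the same route as the paper: identify the vertices with $(m-1)$-subsets of $\{1,\dots,2m-1\}$, obtain vertex-primitivity from the maximality of the subgroup $S_{m-1}\times S_m$ in $S_{2m-1}$, and let the subgroup $\Sym(\{1,\dots,m\})$ permute a matching whose $i$-th edge is determined by omitting the special element $i$. The explicit construction you defer is precisely the paper's choice $S_i = \{1,\dots,m\}\backslash\{i\}$ and $T_i = \{i\}\cup\{m+1,\dots,2m-2\}$, for which your conditions (a)--(c) are immediate since $\sigma \in \Sym(\{1,\dots,m\})$ sends $\{S_i,T_i\}$ to $\{S_{i^\sigma},T_{i^\sigma}\}$.
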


\begin{proof}
 We identify the vertices of $O_m$ with subsets of size $m-1$ of $\{1, 2, \dots, 2m-1\}$. Then $S_{2m-1}$ is primitive on the sets of size $m-1$: the stabilizer of each subset is isomorphic to $S_{m-1} \times S_m$, a maximal subgroup of $S_{2m-1}$ which is core-free (that is, the intersection of all conjugates of the subgroup is trivial; see \cite{Aschbacher}).  Hence there is $G \le \Aut(O_m)$ such that $G \cong S_{2m-1}$ and $O_m$ is $G$-vertex-primitive.  
 
 For each $i$ such that $1 \le i \le m$, define the sets $S_i := \{1, \dots m\} \backslash \{i\}$ and $T_i := \{i\} \cup \{m+1, \dots, 2m-2\}$.  Since vertices of $O_m$ are identified with subsets of $\{1, \dots, 2m-1\}$ of size $m-1$, each $S_i$ and each $T_i$ is a vertex of $O_m$, and, furthermore, \[\M := \{ \{S_i, T_i\} : 1 \le i \le m\}\] is a matching of size $m$.  Let $H:= \Sym(\{1, \dots, m\}) \le G$.  We note that $H \cong S_m$ and $H$ stabilizes $\M$ setwise but allows the edges of $\M$ to be permuted as we please.  Therefore, $\M$ is $H$-permutable, so $\M$ is $G$-permutable, as desired. 
\end{proof}

One might expect that if $\Gamma$ has a group of automorphisms $G$ such that (i) $\Gamma$ has a $G$-permutable $m$-matching, (ii) $G$ has a nontrivial normal subgroup $N$ that is intransitive on vertices, and (iii) $\Gamma$ does not have an induced subgraph isomorphic to $K_{m,m}$ (i.e., if $\Gamma$ does not arise from Construction \ref{const:Kmm}), then the normal quotient graph $\Gamma_N$ should also have a permutable $m$-matching.  However, as the following construction shows, more exotic examples can arise.

\begin{const}
\label{const:nearpolygcover}
Let $\Gamma$ be a $(G,2)$-arc-transitive, near-polygonal graph of degree $m \ge 3$ such that $\Gamma$ does not contain a $G$-permutable $m$-matching and $(\Gamma, G)$ is locally-$S_m$.  Let $E$ denote the edge set of $\Gamma$ and let $T$ denote the set of edges of a spanning tree of $\Gamma$.  Let $\Z_p$ denote the cyclic group of order $p$, where $p$ is a prime. Let $H:= \Z_p^{|E| - |T|}$; $H$ is a $\Z_{p}$-vector space. Let $X$ be a basis for $H$, so $|X|=|E| - |T|$.  Define $\Gamma_p$ to be the derived regular cover of the voltage graph defined by assigning a distinct element of $X$ to each co-tree edge of $\Gamma$.  
\end{const}

\begin{prop}
\label{prop:cover}
The graph $\Gamma_p$ created from Construction \ref{const:nearpolygcover} contains a permutable $m$-matching.
\end{prop}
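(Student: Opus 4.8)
The plan is to realize the permutable matching of $\Gamma_p$ as the lift of the star at a vertex $\alpha$ of $\Gamma$, but spread across $m$ distinct sheets of the cover in a way that is equivariant for the lifted local symmetry. Write $\Gamma(\alpha)=\{\beta_1,\dots,\beta_m\}$ and $s_i := \{(\alpha,0),(\beta_i,\xi_{\alpha\beta_i})\}$ for the star edges at $(\alpha,0)$. By Lemma \ref{lem:allautlifts}, $G$ lifts to $\widetilde G\le\Aut(\Gamma_p)$; the covering group $\CT(p)$ is the translation group $t_k:(v,h)\mapsto(v,h+k)$, which is normal in $\widetilde G$ and isomorphic to $H:=\Z_p^{|E|-|T|}$. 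I identify $H$ with the cycle space $H_1(\Gamma;\Z_p)$ so that, for the lift $\hat g\in\widetilde G_{(\alpha,0)}$ of $g\in G_\alpha$, the fibre action $g^{\phi_\alpha}$ over $\alpha$ is the induced map $\psi_g$ on $H_1(\Gamma;\Z_p)$; since $(\Gamma,G)$ is locally-$S_m$ and covers preserve local actions, $\widetilde G_{(\alpha,0)}\cong G_\alpha$ induces $S_m$ on the neighbours of $(\alpha,0)$.

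First I would build a $G_\alpha$-equivariant labelling of $\Gamma(\alpha)$ by homology classes. Because $\Gamma$ is $(G,2)$-arc-transitive and near-polygonal, Lemma \ref{lem:zhounear}(i) lets us take the distinguished set $\mathcal C$ to be a single $G$-orbit of cycles, so $\mathcal C$ is $G$-invariant and every $2$-arc lies on a unique member. For distinct $i,j$, let $D_{ij}\in H_1(\Gamma;\Z_p)$ be the class of the unique distinguished cycle through $(\beta_i,\alpha,\beta_j)$, oriented along that arc, so $D_{ij}=-D_{ji}$. If $g\in G_\alpha$ induces $\sigma$ on $\Gamma(\alpha)$, then $g$ carries this oriented $2$-arc to $(\beta_{\sigma(i)},\alpha,\beta_{\sigma(j)})$ and fixes $\mathcal C$, so $\psi_g(D_{ij})=D_{\sigma(i)\sigma(j)}$; in particular the local kernel $G_\alpha^{[1]}$ fixes every $D_{ij}$. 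Setting $\Sigma_i:=\sum_{j\ne i}D_{ij}$ then yields a $G_\alpha$-equivariant map $\beta_i\mapsto\Sigma_i$, i.e.\ $\psi_g(\Sigma_i)=\Sigma_{\sigma(i)}$ for all $g\in G_\alpha$.

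Next I would check that these labels are distinct, which is where the choice of $p$ enters. Computing in the edge space $\Z_p^{E}$, the coefficient of the edge $\{\alpha,\beta_1\}$ in $\Sigma_1-\Sigma_2$ equals $m$: it receives $+1$ from each of the $m-1$ summands $D_{1j}$ of $\Sigma_1$, a further $+1$ from the term $-D_{21}$, and no other term involves that edge. Hence $\Sigma_1\ne\Sigma_2$ over $\Z_p$ whenever $p\nmid m$, and by equivariance (with $S_m$ transitive on pairs) all the $\Sigma_i$ are then distinct. Choosing the prime $p$ in Construction \ref{const:nearpolygcover} so that $p\nmid m$, put $k_i:=\Sigma_i$ and
\[ e_i := t_{k_i}(s_i) = \{(\alpha,k_i),(\beta_i,k_i+\xi_{\alpha\beta_i})\}. \]
Because the $k_i$ are distinct and the $\beta_i$ are distinct, $\M_p:=\{e_1,\dots,e_m\}$ is a genuine matching of size $m$ in $\Gamma_p$.

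Finally I would verify permutability. For $g\in G_\alpha$ with lift $\hat g\in\widetilde G_{(\alpha,0)}$ inducing $\sigma$, the conjugation relation $\hat g\,t_k\,\hat g^{-1}=t_{\psi_g(k)}$ together with $\hat g(s_i)=s_{\sigma(i)}$ gives
\[ \hat g(e_i) = \hat g\,t_{k_i}\,\hat g^{-1}\bigl(\hat g(s_i)\bigr) = t_{\psi_g(k_i)}(s_{\sigma(i)}) = t_{k_{\sigma(i)}}(s_{\sigma(i)}) = e_{\sigma(i)}, \]
using $\psi_g(k_i)=\psi_g(\Sigma_i)=\Sigma_{\sigma(i)}=k_{\sigma(i)}$. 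Thus $\widetilde G_{(\alpha,0)}$ stabilises $\M_p$ setwise and acts on its edges exactly as it acts as $S_m$ on $\Gamma(\alpha)$, so $\M_p$ is a permutable $m$-matching of $\Gamma_p$. I expect the main obstacle to be the distinctness of the labels $\Sigma_i$ — guaranteeing that the equivariant family of homology classes genuinely separates the $m$ neighbours rather than collapsing — which is what forces the arithmetic condition $p\nmid m$ and hence a judicious choice of prime; a secondary point requiring care is the orientation bookkeeping needed to make $\beta_i\mapsto\Sigma_i$ equivariant on all of $G_\alpha$, including the local kernel $G_\alpha^{[1]}$.
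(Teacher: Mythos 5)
Your proposal is, at its core, the paper's own proof. The paper defines $h_i$ to be the voltage of the concatenation of all distinguished cycles through the $2$-arcs $(\beta_i,\alpha,\beta_j)$, $j\ne i$ --- exactly your $\Sigma_i$, once the cycles are oriented coherently as you do --- and takes exactly the matching $\{\{(\alpha,h_i),(\beta_i,\xi_i+h_i)\}:1\le i\le m\}$ with the same equivariance statement $\psi_g(h_i)=h_{\sigma(i)}$. Your orientation bookkeeping ($D_{ij}=-D_{ji}$) and the conjugation identity $\hat g\, t_k\, \hat g^{-1}=t_{\psi_g(k)}$ are correct, and simply make explicit steps that the paper asserts without justification.

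The one substantive difference is your distinctness check, and it cuts both ways. You are right that distinctness of the $\Sigma_i$ is not automatic and is genuinely needed: if two coincide then, by $2$-transitivity of $G_\alpha^{\Gamma(\alpha)}$ on the labels, all of them coincide, and the proposed edge set is a star inside a single fibre rather than a matching. The paper's proof is silent on this point, so you have found a real gap in the published argument, and your coefficient computation (the coefficient of $\{\alpha,\beta_1\}$ in $\Sigma_1-\Sigma_2$ is $\pm m$) is correct. However, it rules out collapse only when $p\nmid m$, so what you have proved is the proposition for primes not dividing $m$; the statement and Construction \ref{const:nearpolygcover} allow \emph{every} prime, and ``choosing $p$ with $p\nmid m$'' amends the construction rather than proving the stated claim. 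For $p\mid m$ the issue is genuinely delicate: when $m=p=2$ the classes always collapse (there $D_{12}=-D_{21}\equiv D_{21}$), while for, say, $\Gamma=K_{m+1}$ with $m\ge 3$ distinctness survives every prime, because $\Sigma_1-\Sigma_2$ has coefficient $\pm 1$ on the edges $\{\beta_j,\beta_1\}$, $j\ge 3$ --- but such a check uses edges away from $\alpha$ and is graph-specific, and neither your argument nor the paper's settles the general case. Note the restriction is harmless for the only application (Corollary \ref{cor:nearpolygmatchings} needs only infinitely many primes), but you should state explicitly that your proof covers only $p\nmid m$.
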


\begin{proof}
Let $\alpha$ be a vertex of $\Gamma$ with $\Gamma(\alpha) = \{ \beta_1, \dots, \beta_m \}$.  By Lemmas \ref{lem:lift} and \ref{lem:allautlifts}, $G$ lifts to a group of automorphisms $\widetilde{G}$ of $\Gamma_p$ and there is a group homomorphism $\phi: G \rightarrow \Aut(H)$, where the action is induced on a generating set of all closed walks based at the vertex $\alpha$.  Since $\Gamma$ is near-polygonal and $(G,2)$-arc-transitive, each $2$-arc $(\beta_i, \alpha, \beta_j)$ is contained in a unique cycle $C_{i,j}$, and $G_\alpha$ is transitive on these cycles. Define $h_i$ to be the voltage of the walk $W_i$, where $W_i$ is the concatenation of all cycles $C_{i,j}$ such that $j \neq i$.  Note that, since $m \ge 3$, the cycles $C_{i,j}$ are distinct, and $X$ is a basis for $H$, the $h_i$ are all pairwise distinct.  If $g \in G_\alpha$ and $\beta_i^{g} = \beta_j$, then the induced action of $g$ on $H$ sends $h_i$ to $h_j$.  If $\xi_i$ is the voltage of the arc $(\alpha, \beta_i)$, then the matching $\{ \{(\alpha, h_i), (\beta_i, \xi_i + h_i) \} : 1 \le i \le m \}$ is $\widetilde{G}$-permutable.
\end{proof}

\begin{cor}
\label{cor:nearpolygmatchings}
For each $m \ge 3$, there exist infinitely many graphs $\Gamma$ with a group of automorphisms $G$ such that
\begin{itemize}
\item[(i)] $\Gamma$ is $(G,2)$-arc-transitive,
\item[(ii)] $(\Gamma,G)$ is locally-$S_{m}$,
\item[(iii)] $\Gamma$ contains a $G$-permutable $m$-matching, and
\item[(iv)] $G$ has a nontrivial normal subgroup $N$ that has more than two orbits on $V(\Gamma)$,
\end{itemize}
yet $\Gamma_N$ does not contain a $G$-permutable $m$-matching.
\end{cor}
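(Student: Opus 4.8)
The plan is to produce the required examples as regular covers built by Construction \ref{const:nearpolygcover}, letting the prime $p$ vary. By Proposition \ref{prop:cover} every graph arising from that construction already satisfies condition (iii), so the real content is twofold: first, exhibit for each $m \ge 3$ a single base graph meeting all the hypotheses of Construction \ref{const:nearpolygcover} (in particular one that is near-polygonal, locally-$S_m$, and $(G,2)$-arc-transitive, yet has \emph{no} $G$-permutable $m$-matching); and second, check that conditions (i), (ii), and (iv) transfer to the covers, and that the relevant normal quotient recovers the base graph.

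For the base graph I would take $\Gamma_0 = K_{m+1}$ with $G = \Aut(K_{m+1}) \cong S_{m+1}$. This graph has degree $m$; since $S_{m+1}$ is sharply $3$-transitive on vertices it is in particular $(G,2)$-arc-transitive, and the stabilizer $G_\alpha \cong S_m$ acts as the full symmetric group on the $m$ neighbors of $\alpha$, so $(\Gamma_0,G)$ is locally-$S_m$. As recorded in Section \ref{subsect:nearpolygonal}, $K_{m+1}$ is strict polygonal of girth $3$ with distinguished set $\mathcal{C}$ equal to its triangles; these form a single $G$-orbit and every $2$-path lies in a unique one, so $\Gamma_0$ is near-polygonal in the sense used here (this is exactly where Perkel's convention allowing $c=3$ is needed). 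Finally, any matching in $K_{m+1}$ has at most $\lfloor (m+1)/2 \rfloor < m$ edges once $m \ge 3$, so $K_{m+1}$ contains no $m$-matching whatsoever, and in particular no $G$-permutable $m$-matching. Thus $\Gamma_0$ satisfies every hypothesis of Construction \ref{const:nearpolygcover}.

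Applying Construction \ref{const:nearpolygcover} to $\Gamma_0$ for a prime $p$ gives the cover $\widetilde{\Gamma}_p$, and by Lemmas \ref{lem:lift} and \ref{lem:allautlifts} the group $G$ lifts to $\widetilde{G} \le \Aut(\widetilde{\Gamma}_p)$ with $N := \CT(p) \cong \Z_p^{|E|-|T|} = \Z_p^{\binom{m}{2}}$ the kernel of the projection $\widetilde{G} \to G$. Since $\widetilde{G}$ permutes fibers, $N$ is normal in $\widetilde{G}$; its orbits are the $m+1 > 2$ fibers, which gives (iv), and the normal quotient $(\widetilde{\Gamma}_p)_N$ is exactly $\Gamma_0 = K_{m+1}$, which has no permutable $m$-matching. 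Condition (iii) is Proposition \ref{prop:cover}. For (i) and (ii) I would note that $\widetilde{G}$ is vertex-transitive (as $N$ is transitive on each fiber and $\widetilde{G}/N \cong G$ is transitive on fibers) and that the covering projection restricts to a bijection on neighborhoods, whence $\widetilde{G}_{\tilde\alpha}^{\widetilde{\Gamma}_p(\tilde\alpha)} \cong G_\alpha^{\Gamma_0(\alpha)} \cong S_m$; being vertex-transitive and locally-$S_m$ then forces $(\widetilde{G},2)$-arc-transitivity, since $S_m$ is $2$-transitive for $m \ge 3$. Distinct primes give covers with $(m+1)\,p^{\binom{m}{2}}$ vertices, so the family $\{\widetilde{\Gamma}_p\}$ consists of infinitely many pairwise non-isomorphic graphs, completing the argument.

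The cover-theoretic bookkeeping in the last paragraph is routine given Proposition \ref{prop:cover} and the lifting lemmas; the genuine obstacle is the first step, namely isolating a base graph that is simultaneously near-polygonal, locally-$S_m$, and $(G,2)$-arc-transitive while admitting no permutable $m$-matching. The complete graph $K_{m+1}$ resolves this uniformly in $m$ precisely because it is too small to contain an $m$-matching yet retains the full local symmetry $S_m$, and verifying that its triangles supply the near-polygonal structure is the one delicate point.
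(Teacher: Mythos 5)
Your proposal is correct and takes essentially the same route as the paper: the paper's own proof likewise applies Construction \ref{const:nearpolygcover} to a base graph satisfying its hypotheses (the paper lists $K_{m+1}$, the hypercube $Q_m$, or the folded hypercube) and invokes Proposition \ref{prop:cover}, with the varying prime $p$ supplying infinitely many covers; your write-up simply verifies in detail, for $K_{m+1}$, what the paper asserts without proof. One cosmetic slip: $S_{m+1}$ is $3$-transitive but not \emph{sharply} $3$-transitive on the vertices of $K_{m+1}$ when $m \ge 4$; since you only use $2$-arc-transitivity, which follows from $3$-transitivity, this is harmless.
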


\begin{proof}
For each $m \ge 3$, we can take $\overline{\Gamma}$ to be $K_{m+1}$, the $m$-dimensional hypercube $Q_m$, or the folded $m$-dimensional hypercube, each of which satisfies the hypotheses of Construction \ref{const:nearpolygcover}.  To see that the hypercube $Q_m$ has no $G$-permutable $m$-matching, we first identify the vertices of $Q_m$ with binary $m$-tuples and note that $\Aut(Q_m) \cong S_2 \Wr S_m$. Consequently, $G \le \Aut(Q_m)_\alpha$ for some vertex $\alpha$, which without a loss of generality is the all zeros $m$-tuple.  Hence $G$ must preserve distances of vertices from $\alpha$, and so, in order for $G$ to act like $S_m$ on $m$ distinct $m$-tuples with the same number of $0$'s and $1$'s, there is either exactly one $0$ or exactly one $1$.  Hence, without a loss of generality, the edges in the $G$-permutable $m$-matching are all from vertices at distance $1$ from $\alpha$ to vertices at distance $2$ from $\alpha$.  However, since every $2$-path is contained in a unique $4$-cycle, the action on the edges in the matching cannot be permutable: once an edge in the matching is fixed, necessarily another neighbor of $\alpha$ is fixed, which fixes another edge in the matching, a contradiction to permutability.  The argument for the folded $m$-dimensional hypercube is analogous.   

The result now follows from Proposition \ref{prop:cover}, taking $\Gamma = \overline{\Gamma}_p$, where $p$ ranges over all primes. 
\end{proof}

\section{The local structure of graphs with a permutable matching}
\label{sect:local}

In this section, we prove results about the local structure of a $G$-arc-transitive graph with a $G$-permutable $m$-matching, that is, we prove results about the stabilizer of a vertex and the size of the neighborhood of a vertex in such a graph.  This first result, which has a similar proof to that of \cite[Theorem 1.1]{Song}, provides information about the edge stabilizer of an arc-transitive graph with a permutable $m$-matching when $m$ is large enough.   

\begin{prop}
\label{prop:Am-1Comp}
 Let $\Gamma$ be a $G$-arc-transitive graph with a $G$-permutable $m$-matching, where $m \ge 6$.  If $\{\alpha, \beta\}$ is an edge of $\Gamma$, then there is a subgroup $U \le G_{\alpha\beta}$ such that $U^{\Gamma(\alpha)}$ has a composition factor isomorphic to $A_{m-1}$.
\end{prop}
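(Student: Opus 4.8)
The plan is to reduce to the case that $\{\alpha,\beta\}$ is itself an edge of $\M$, extract an $A_{m-1}$ composition factor from the permutation action on the matching, and then \emph{localize} this composition factor to the action on $\Gamma(\alpha)$.

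First I would use arc-transitivity: since $G$ is transitive on arcs and the asserted property of $U^{\Gamma(\alpha)}$ is preserved under conjugation, I may assume $\{\alpha,\beta\}=e_1$ is one of the edges of $\M=\{e_1,\dots,e_m\}$. Let $\pi\colon G_\M\to\Sym(E(\M))$ be the induced action, so that $\pi(G_\M)\cong S_m$, and set $U_0:=G_\M\cap G_\alpha$. Since $\alpha$ lies in the unique matching edge $e_1$, any element of $G_\M$ fixing $\alpha$ fixes $e_1$ setwise and hence fixes $\beta$; thus $U_0\le G_{\alpha\beta}$.

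Next I would pin down $\pi(U_0)$ by an orbit count. As $U_0$ fixes $e_1$, we have $\pi(U_0)\le S_{m-1}$, the stabilizer of $e_1$ in $S_m$. Because $\pi(G_\M)=S_m$ is transitive on $E(\M)$, the orbit $\alpha^{G_\M}$ meets each matching edge in the same number $t\in\{1,2\}$ of vertices, so $[G_\M:U_0]=tm$. Consequently $[S_m:\pi(U_0)]=[G_\M:U_0\ker\pi]$ divides $tm$ while being at least $[S_m:S_{m-1}]=m$; hence it equals $m$ or $2m$. Using $m-1\ge 5$, so that $A_{m-1}$ is the unique index-$2$ subgroup of $S_{m-1}$, this forces $\pi(U_0)\in\{S_{m-1},A_{m-1}\}$. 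Either way $A_{m-1}$ is a composition factor of $U_0$, and therefore of $G_{\alpha\beta}$.

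The hard part will be passing from ``$A_{m-1}$ is a composition factor of $G_{\alpha\beta}$'' to ``$A_{m-1}$ is a composition factor of the local action $G_{\alpha\beta}^{\Gamma(\alpha)}$,'' after which $U:=G_{\alpha\beta}$ satisfies the conclusion. The obstacle is that the matching edges may be far from $\alpha$, so the composition factor can sit inside the kernel $G_\alpha^{[1]}$ of the local action rather than being visible on $\Gamma(\alpha)$; this genuinely happens in the odd graph $O_m$ of Theorem~\ref{thm:oddgraphs}, where the edge-permuting elements supplied by the matching fix $\Gamma(\alpha)$ pointwise. To overcome this I would exploit that $G_{\alpha\beta}$ acts faithfully on the connected vertex set: letting $K_i$ be the pointwise stabilizer in $G_{\alpha\beta}$ of the ball of radius $i$ about $\{\alpha,\beta\}$, one obtains a chain $G_{\alpha\beta}=K_0\ge K_1\ge\cdots$ with $\bigcap_i K_i=1$, and each quotient $K_i/K_{i+1}$ embeds into a direct product of arc-local actions $G_{vw}^{\Gamma(v)}$ over arcs $(v,w)$. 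Since $A_{m-1}$ is a nonabelian simple composition factor of $K_0$, it must be a composition factor of some $K_i/K_{i+1}$, hence of a single factor $G_{vw}^{\Gamma(v)}$; by arc-transitivity this local action is permutation isomorphic to $G_{\alpha\beta}^{\Gamma(\alpha)}$, which is exactly what is needed. Verifying that this normal series has its quotients controlled by arc-local actions, and keeping track of the stabilized neighbor $w$ at each stage, is the delicate bookkeeping, and is where the argument runs parallel to \cite[Theorem 1.1]{Song}.
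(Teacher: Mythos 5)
Your skeleton matches the paper's proof: reduce by arc-transitivity to an edge of $\M$, extract $A_{m-1}$ from the $S_m$-action on $E(\M)$ (your orbit-counting argument that $\pi(U_0)\in\{A_{m-1},S_{m-1}\}$ is correct, and your $U_0$ is exactly the group $G_{\alpha\beta\M}$ the paper works with), then descend through pointwise stabilizers until the factor becomes visible in a local action, finishing with arc-transitivity. The gap is a group-theoretic inference you make twice: that a composition factor of a subgroup is a composition factor of the ambient group. This fails without subnormality (for example $A_5\le S_6$, yet the composition factors of $S_6$ are $A_6$ and $C_2$). The first instance is ``$A_{m-1}$ is a composition factor of $U_0$, and therefore of $G_{\alpha\beta}$'': $U_0=G_\M\cap G_{\alpha\beta}$ is in general not subnormal in $G_{\alpha\beta}$, so the starting point of your descent, that $A_{m-1}$ is a composition factor of $K_0=G_{\alpha\beta}$, is unproved. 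The second instance is at the end: $K_i/K_{i+1}$ embeds \emph{subdirectly} into the product of the images $K_i^{\Gamma(v)}$, and composition factors of a subdirect product do distribute over the factors, so you may legitimately conclude that $A_{m-1}$ is a composition factor of some image $K_i^{\Gamma(v)}$; but $K_i^{\Gamma(v)}$ is only a subgroup of $G_{vw}^{\Gamma(v)}$, so you cannot conclude that $A_{m-1}$ is a composition factor of $G_{vw}^{\Gamma(v)}$, nor (after conjugating) of $G_{\alpha\beta}^{\Gamma(\alpha)}$. The strengthened conclusion you aim for, taking $U:=G_{\alpha\beta}$, is precisely what the existential phrasing ``there is a subgroup $U\le G_{\alpha\beta}$'' in the statement is designed to avoid.

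Both problems disappear if you never leave the matching stabilizer: set $K_0:=U_0=G_{\alpha\beta\M}$, let $K_i$ be the pointwise stabilizer in $U_0$ of the ball of radius $i$ about $\{\alpha,\beta\}$, run the Jordan--H\"older argument on the normal series $1=K_n\unlhd\cdots\unlhd K_1\unlhd K_0$ to place $A_{m-1}$ as a composition factor of some $K_i^{\Gamma(v)}$ with $K_i\le G_{vw}$, and then take $U:=K_i^{\,g}$, where $g\in G$ carries the arc $(v,w)$ to $(\alpha,\beta)$; then $U\le G_{\alpha\beta}$ and $U^{\Gamma(\alpha)}$ is permutation isomorphic to $K_i^{\Gamma(v)}$, which is what the proposition asks for. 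This repaired argument is essentially the paper's proof: there the descent runs along a path $(\alpha,\beta,\gamma_2,\dots,\gamma_n)$ chosen so that $G^{[1]}_{\alpha\beta\gamma_2\cdots\gamma_n}=1$, every term of the subnormal chain carries the subscript $\M$, namely $1=\big(G^{[1]}_{\alpha\beta\gamma_2\cdots\gamma_n}\big)_\M\unlhd\cdots\unlhd\big(G^{[1]}_{\alpha\beta}\big)_\M\unlhd G_{\alpha\beta\M}$, and the final $U$ is a conjugate of $G_{\gamma_{\ell-1}\gamma_\ell\M}$; nowhere does the paper assert that $A_{m-1}$ is a composition factor of $G_{\alpha\beta}$ itself or of its full local action $G_{\alpha\beta}^{\Gamma(\alpha)}$.
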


\begin{proof}
Let $\M$ be a $G$-permutable $m$-matching containing $\{\alpha, \beta\}$, where \[\M = \left\{ e = e_1 = \{\alpha, \beta \}, e_2, \dots, e_m \right\}.\]
Note that $G_{\M}^{\M} \cong S_m$ and $G_{e \M}^{\M \backslash e} \cong S_{m-1}$.  Let $K:= \{g \in G_{e \M} : e_i^g = e_i, 1\le i \le m\}$, the kernel of the action of $G_{e\M}$ on $\M$.  We have $K \lhd G_{e \M}$, $G_{e \M}/K \cong S_{m-1}$, and hence $A_{m-1}$ is a composition factor of $G_{e \M}$ (since $m \ge 6$, $A_{m-1}$ is simple).

Now, consider the subgroup $G_{\alpha \beta}$ of $G_e$.  We have $G_{\alpha \beta} \lhd G_e$ since it has index at most two, and so $\big( G_{\alpha \beta}^{[1]}\big)_\M \lhd G_{\alpha\beta \M} \lhd G_{e \M}$.  Let $P = (\gamma_0 = \alpha, \gamma_1 = \beta, \gamma_2, \dots, \gamma_n)$ be a path in $\Gamma$ such that $G_{\alpha\beta\gamma_2\dots\gamma_n}^{[1]} = 1$.  Hence $$1 = \big(G_{\alpha\beta\gamma_2\dots\gamma_n}^{[1]}\big)_\M \unlhd \dots \unlhd \big(G_{\alpha \beta \gamma_2}^{[1]}\big)_\M \unlhd \big(G_{\alpha\beta}^{[1]}\big)_\M \unlhd G_{\alpha \beta \M}.$$  Since $A_{m-1}$ is a composition factor of $G_{e \M}$ and $G_{\alpha \beta}$ has index at most two in $G_{e}$, $A_{m-1}$ must be a composition factor of $G_{\alpha\beta \M}$.  If $A_{m-1}$ is a composition factor of either $G_{\alpha \beta \M}^{\Gamma(\alpha)}$ or $G_{\alpha \beta \M}^{\Gamma(\beta)}$, then we are done.  Otherwise, $A_{m-1}$ is a composition factor of $\big(G_{\alpha}^{[1]}\big)_\M \cap \big(G_{\beta}^{[1]}\big)_\M= \big(G_{\alpha \beta}^{[1]}\big)_\M$.  Let $\ell$ be the largest integer such that $A_{m-1}$ is a composition factor of $\big(G_{\alpha \beta \gamma_2 \dots \gamma_\ell}^{[1]}\big)_\M$.  This implies that $A_{m-1}$ is not a composition factor of $\big(G_{\alpha \beta \gamma_2 \dots \gamma_\ell\gamma_{\ell+1}}^{[1]}\big)_\M$, and so $A_{m-1}$ must be a composition factor of $\big(G_{\alpha \beta \gamma_2 \dots \gamma_{\ell}}^{[1]}\big)_\M/ \big(G_{\alpha \beta \gamma_2 \dots \gamma_\ell\gamma_{\ell +1}}^{[1]}\big)_\M$.  Since   
\[ \big(G_{\alpha \beta \gamma_2 \dots \gamma_{\ell}}^{[1]}\big)_\M/ \big(G_{\alpha \beta \gamma_2 \dots \gamma_\ell\gamma_{\ell +1}}^{[1]}\big)_\M \cong \Big( \big(G_{\alpha \beta \gamma_2 \dots \gamma_{\ell}}^{[1]}\big)_\M\Big)^{\Gamma(\gamma_{\ell+1})} {\lhd\lhd} \; G_{\gamma_{\ell} \gamma_{\ell+1} \M}^{\Gamma(\gamma_{\ell+1})},\]
$A_{m-1}$ is a composition factor of $G_{\gamma_{\ell}\gamma_{\ell+1} \M}^{\Gamma(\gamma_{\ell+1})}$.  Since $\Gamma$ is $G$-arc-transitive, $G_{\gamma_{\ell}\gamma_{\ell +1} \M} \cong U \le G_{\alpha \beta}$, and hence $A_{m-1}$ is a composition factor of $U^{\Gamma(\alpha)}$ for some $U \le G_{\alpha \beta}$, as desired. 
\end{proof}

A consequence of this result is that the degree of a vertex in an arc-transitive graph with an permutable $m$-matching is at least $m$ when $m \ge 6$; in fact, we can classify the graphs with degree less than $m$ and a permutable $m$-matching. 

\begin{proof}[Proof of Theorem \ref{thm:degreem}]
 By Proposition \ref{prop:Am-1Comp}, when $m\geq 6$, for an edge $\{\alpha, \beta\}$ of $\Gamma$ there exists $U \le G_{\alpha \beta}$ such that $U^{\Gamma(\alpha)}$ has a composition factor isomorphic to $A_{m-1}$.  For $m \ge 5$, the smallest faithful permutation representation of $A_m$ has degree $m$.  Since $U$ fixes $\beta \in \Gamma(\alpha)$, this implies that $|\Gamma(\alpha)| - 1 \ge m - 1$. When $m = 1$ and $m = 2$, the result is clear since $\Gamma$ is connected.  When $m = 3$, since the graph is connected and arc-transitive, the degree of the graph is at least two.  If the degree of $\Gamma$ is exactly two, then $\Gamma$ is a cycle, and the result follows by noting that $\Gamma$ must have at least six vertices and that $\Aut(\Gamma)$, which is a dihedral group, must have order divisible by three. 
 
 We are left with the cases $m = 4$ and $m = 5$.  In either case, if the degree of such a graph $\Gamma$ is $2$, then $\Gamma$ is a cycle, and $\Aut(\Gamma)$ contains no section isomorphic to $S_4$.  If the degree of such a graph $\Gamma$ is $3$, then, by a famous result of Tutte \cite{Tutte}, the order of a vertex stabilizer divides $48$, and hence the order of an edge stabilizer divides $(48 \cdot 2)/3 = 32$.  If $m \ge 4$, $G = \Aut(\Gamma)$, and the permutable matching is $\M$, then $3$ divides the order of the stabilizer of an edge in $G$, since the stabilizer of an edge of $\M$ can permute three other edges of $\M$ in any way. Thus there is no graph of degree $3$ with a permutable $4$-matching.  
 
 Finally, assume $\Gamma$ is regular of degree $4$ and that $\M$ is a $G$-permutable $5$-matching for $G = \Aut(\Gamma)$. Let $\M = \{e_1, e_2, e_3, e_4, e_5\}$, where each $e_i = \{\alpha_i, \beta_i\}$.  Consider a shortest path $P_2$ from a vertex of $e_1$ to a vertex of $e_2$.  Without loss of generality, the path is between $\alpha_1$ and $\alpha_2$.  Since $\M$ is permutable, there are elements $g_i$ in $G_\M$ that fix $e_1$ and map $e_2$ to $e_i$, $3 \le i \le 5$, and so there exist shortest paths from $e_1$ to $e_i$, where $2 \le i \le 5$, that are all of the same length.   
 
 We claim that there must exist a path from $\alpha_1$ to $e_i$ with the same length as $P_2$ for each $i$.  If $\alpha_1^{g_i} = \alpha_1$, then there is a shortest path from $\alpha_1$ to $\alpha_i$, so assume that $\alpha_1^{g_i} = \beta_1$.  Consider a fourth edge $e_j$.  Suppose first that $\alpha_1^{g_j} = \alpha_1$ (so $P_2^{g_j}$ is a shortest path from $e_1$ to $e_j$ starting at $\alpha_1$). Since $\M$ is $G$-permutable, there is $g \in G_\M$ such that $e_1^g = e_1$, $e_2^g = e_2$, and $e_j^g = e_i$.  If $\alpha_1^g = \alpha_1$, then the path $P_2^{g_jg}$ is a shortest path from $e_1$ to $e_i$ starting at $\alpha_1$.  On the other hand, if $\alpha_1^g = \beta_1$, then, since $e_2^g = e_2$, $P_2^g$ is a shortest path from $e_1$ to $e_2$ starting at $\beta_1$, and so there is a shortest path from $e_1$ to $e_2$ starting at each of $\alpha_1$ and $\beta_1$.  Since $\M$ is permutable, this implies that there exists a shortest path from $e_1$ to $e_i$ starting at $\alpha_1$.  Suppose next that $\alpha_1^{g_j} = \beta_1$ (so that $P_2^{g_j}$ is a shortest path from $e_1$ to $e_j$ starting at $\beta_1$).  Since $\M$ is $G$-permutable, there exists $x \in G_\M$ such that $e_1^x = e_1$, $e_2^x = e_i$, and $e_j^x = e_j$.  If $\alpha_1^x = \alpha_1$, then $P_2^x$ is a shortest path from $e_1$ to $e_i$ starting at $\alpha_1$, whereas if $\alpha_1^x = \beta_1$, then $P_2^{g_jx}$ is a shortest path from $e_1$ to $e_j$ starting at $\alpha_1$, so there shortest path from $e_1$ to $e_j$ starting at each of $\alpha_1$ and $\beta_1$.  Since $\M$ is permutable, this implies there is a shortest path from $e_1$ to $e_i$ starting at $\alpha_1$.  Therefore, we can always find a shortest path from $e_1$ to $e_i$ starting at $\alpha_1$ for each $i$, $2 \le i \le 5$.  
 
If necessary, we relabel the vertices in $e_3$, $e_4$, and $e_5$ so that there is a shortest path from $\alpha_1$ to $\alpha_i$ for each $i$, $2 \le i \le 5$, and we denote these paths by $P_i$.  For each $i$, let 
 \[P_i = (\alpha_1 = \gamma_{i,0}, \gamma_{i,1}, \dots, \gamma_{i,n} = \alpha_i).\]
 Since $|\Gamma(\alpha_1) \backslash \{\beta_1\}| = 3$, at least two $P_i$ go through the same neighbor of $\alpha_1$, say $\gamma = \gamma_{2,1} = \gamma_{3,1}$.  Consider $h \in G_\M$ such that $h$ acts on $\M$ as the permutation $(3 \; 4 \; 5)$.  Note that, since the induced action of $h$ on $\M$ has order $3$, if $\alpha_1^h = \beta_1$, then we could choose $h^2$ instead, so we may assume that $\alpha_1^h = \alpha_1$ and $\alpha_2^h = \alpha_2$.  Assume  first that $\gamma^h \neq \gamma$.  This implies that $\Gamma(\alpha_1) = \{\beta_1, \gamma, \gamma^h, \gamma^{h^2}\}$ and that there is a shortest path from $\alpha_1$ to $\alpha_2$ through each of $\gamma, \gamma^h$, and $\gamma^{h^2}$.  However, this implies, by the permutability of $\M$, that there is a shortest path from $\alpha_1$ to each $\alpha_i$ through each of $\gamma$, $\gamma^h$, and $\gamma^{h^2}$.  Thus we may choose each $P_i$ so that $\gamma_{i,1} = \gamma$.  On the other hand, if $\gamma^h = \gamma$, then there is a path from $\alpha_1$ to $\alpha_i$ through $\gamma$ for each $i$, namely, $P_4' := P_3^h$ goes from $\alpha_1$ to $\alpha_4$ and $P_5':= P_3^{h^2}$ goes from $\alpha_1$ to $\alpha_5$.  Hence, in any case we may assume that $\gamma_{i,1} = \gamma$ for all $i$.  However, $\gamma$ has exactly three neighbors that are not $\alpha_1$.  We apply a similar argument for the $\gamma_{i,2}$, $\gamma_{i,3}$, etc., and reach a contradiction: either $\Gamma$ is disconnected or a vertex has degree greater than $4$.  Therefore, there is no connected graph of degree $4$ with a permutable $5$-matching, and the result holds.
 \end{proof}

\section{Locally primitive, arc-transitive graphs with degree $m$ and a permutable $m$-matching}
\label{sect:degreem}

Given that a graph with a permutable $m$-matching has degree at least $m$ when $m \ge 4$ and given the constructions from Section \ref{sect:constructions}, it makes sense to study arc-transitive, locally primitive graphs of degree $m$ that contain a permutable $m$-matching.  The following results show that such graphs $\Gamma$ with a group of automorphisms $G$ do have a nice structure with respect to nontrivial normal subgroups $N$ of $G$ such that $N$ is intransitive on vertices.

\begin{lem}
\label{lem:locallySm}
Let $\Gamma$ be a $G$-arc-transitive graph with degree $m \ge 6$ and let $(\Gamma, G)$ be locally-$S_m$.  Either $\Gamma$ contains a $G$-permutable $m$-matching or $\Gamma$ is near-polygonal.
\end{lem}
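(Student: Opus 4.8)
The plan is to first upgrade the hypotheses to $2$-arc-transitivity so that the near-polygonal criterion of Lemma \ref{lem:zhounear} becomes available, and then to run the dichotomy it provides. Since $(\Gamma, G)$ is locally-$S_m$, the vertex stabilizer $G_\alpha$ induces the full symmetric group on $\Gamma(\alpha)$, which in particular is $2$-transitive; combined with $G$-arc-transitivity this shows that for any arc $(\alpha,\beta)$ the stabilizer $G_{\alpha\beta}$ is transitive on $\Gamma(\beta)\setminus\{\alpha\}$, so $\Gamma$ is $(G,2)$-arc-transitive. Fixing a $2$-arc $(\alpha,\beta,\gamma)$ and setting $H := G_{\alpha\beta\gamma}$, Lemma \ref{lem:zhounear} tells us that $\Gamma$ is near-polygonal precisely when $H$ fixes a vertex of $\Gamma(\gamma)\setminus\{\beta\}$. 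If it does, we are in the near-polygonal case and there is nothing more to prove, so the real content is to show that when $H$ fixes \emph{no} vertex of $\Gamma(\gamma)\setminus\{\beta\}$ the graph must contain a $G$-permutable $m$-matching.

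Before constructing the matching I would dispose of a girth distinction. Because $G_\alpha^{\Gamma(\alpha)}\cong S_m$ is $2$-transitive, the only $G_\alpha$-invariant graphs on $\Gamma(\alpha)$ are the edgeless graph and the complete graph, so $\Gamma[\Gamma(\alpha)]$ is either independent or a clique. If it is a clique, then $\alpha$ and $\gamma$ are both neighbors of $\beta$ and hence adjacent, so $\alpha\in\Gamma(\gamma)\setminus\{\beta\}$ is fixed by $H$; thus condition (ii) of Lemma \ref{lem:zhounear} holds and $\Gamma$ is near-polygonal. Consequently, in the outstanding case every neighborhood is independent and $\Gamma$ has girth at least $4$. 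This matches the two guiding examples of locally-$S_m$ graphs of degree $m$: the hypercube $Q_m$, where the $2$-paths through an edge are linked into $4$-cycles by a canonical bijection of the two end-neighborhoods, so that $H$ fixes a vertex and $\Gamma$ is near-polygonal; versus $K_{m,m}$, where \emph{every} neighbor of one endpoint closes a $4$-cycle with every neighbor of the other, so that $H$ fixes nothing and $K_{m,m}$ carries the permutable matching $\{\{a_i,b_i\}\}$, permuted by the diagonal $S_m \le S_m \times S_m$.

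The crux, and the step I expect to be the main obstacle, is the construction of the matching in this remaining case. The structural fact to extract from ``$H$ fixes no vertex of $\Gamma(\gamma)\setminus\{\beta\}$'' is that there is no $G_{\beta\gamma}$-equivariant way to single out a neighbor of $\gamma$ from the datum $\alpha$: the two local actions of $G_{\beta\gamma}$, on $\Gamma(\beta)\setminus\{\gamma\}$ and on $\Gamma(\gamma)\setminus\{\beta\}$, are not diagonally linked. I would exploit this independence, modeled on $\{\{a_i,b_i\}\}$ in $K_{m,m}$, to select compatibly across $\Gamma$ a set of $m$ pairwise-disjoint edges, and then to produce a subgroup of $G$ stabilizing this set setwise and inducing the full symmetric group on its edges. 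The delicate points are (a) guaranteeing pairwise disjointness of the chosen edges, which is exactly where girth at least $4$ and the failure of the fixed-point condition are needed, since linkage would instead force the edges to collapse onto a single star, as happens in the near-polygonal covers of Construction \ref{const:nearpolygcover}; and (b) verifying that the setwise stabilizer of the resulting matching surjects onto $S_m$ rather than a proper subgroup. For (b) the hypothesis $m\ge 6$ should be decisive, since it keeps $A_{m-1}$ simple and rules out small exceptional linkages, so that the transpositions supplied by $G_\alpha\cong S_m$ together with the unlinked product structure of $G_{\beta\gamma}$ furnish the full $S_m$ on the edges of the matching.
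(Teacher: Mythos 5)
Your reduction is set up correctly: locally-$S_m$ together with $G$-arc-transitivity gives $(G,2)$-arc-transitivity, so Lemma \ref{lem:zhounear} immediately disposes of the case where $H = G_{\alpha\beta\gamma}$ fixes a vertex of $\Gamma(\gamma)\setminus\{\beta\}$, and your clique/girth observation is sound. But the entire mathematical content of the lemma lies in the remaining case, and there your proposal is a plan rather than a proof: you correctly name the two delicate points (pairwise disjointness of the chosen edges, and surjectivity of the setwise stabilizer onto $S_m$), but you resolve neither. No mechanism is given for selecting the $m$ edges, no candidate subgroup is exhibited to permute them, and the hypothesis you propose to work from --- ``$H$ fixes no vertex of $\Gamma(\gamma)\setminus\{\beta\}$,'' i.e.\ the absence of a $G_{\beta\gamma}$-equivariant linkage --- is a purely negative statement that is never converted into positive group-theoretic structure. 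That conversion is the crux, and it is missing.

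What supplies it in the paper is the kernel $G_\alpha^{[1]}$ of the local action, pinned down via Van Bon's theorem: since $G_{\alpha\beta}$ has nontrivial layer (this is where $m \ge 6$ really enters, not in ruling out ``exceptional linkages''), \cite[Theorem 2.12]{VanBon} gives $G_{\alpha\beta}^{[1]} = 1$, whence $G_\alpha^{[1]}$ commutes with each $G_{\beta_i}^{[1]}$ and embeds as a normal subgroup of $G_{\alpha\beta_i}^{\Gamma(\beta_i)\setminus\{\alpha\}} \cong S_{m-1}$, so $G_\alpha^{[1]}$ is $1$, $A_{m-1}$, or $S_{m-1}$. The lemma's dichotomy is exactly the dichotomy on this kernel. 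If $G_\alpha^{[1]} = 1$, then $G_{\alpha\beta_1} \cong S_{m-1}$ acts faithfully on both $\Gamma(\alpha)\setminus\{\beta_1\}$ and $\Gamma(\beta_1)\setminus\{\alpha\}$, forcing a two-point stabilizer to fix a vertex of $\Gamma(\beta_1)\setminus\{\alpha\}$, and Lemma \ref{lem:zhounear} yields near-polygonality --- this recovers your ``linked'' case. If instead $G_\alpha^{[1]} \cong S_{m-1}$ or $A_{m-1}$, the permutable matching is built concretely from the kernels $G_{\beta_i}^{[1]}$: its edges $\{\beta_i, \gamma_i\}$ run from $\Gamma(\alpha)$ into the second neighbourhood of $\alpha$, and the subgroup inducing $S_m$ on it is generated by elements of these kernels (in the $A_{m-1}$ case one first shows that $\langle G_{\beta_i}^{[1]} : 1 \le i \le m \rangle$ has $m-1$ orbits of size $m$ on the second neighbourhood, takes the matching inside one such orbit, and adjoins a single extra element acting as a transposition). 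Your fixed-point dichotomy is compatible with this --- the absence of a fixed point does force $G_\alpha^{[1]} \neq 1$ --- but without identifying and analyzing the kernel you have neither the vertices $\gamma_i$ nor the group realizing $S_m$ on the matching, so the heart of the lemma remains unproved.
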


\begin{proof}
Let $\Gamma$ be such a graph, and let $\alpha$ be a vertex with $\Gamma(\alpha) =\{\beta_1, \dots, \beta_m\}$.  Since $(\Gamma,G)$ is locally-$S_m$, $\Gamma$ is $G$-locally primitive; in fact, $G_\alpha^{\Gamma(\alpha)} \cong S_m$ and, for each $1 \le i \le m$, $G_{\alpha \beta_i}^{\Gamma(\alpha) \backslash \{\beta_i\}} \cong G_{\alpha \beta_i}^{\Gamma(\beta_i) \backslash \{\alpha\}} \cong S_{m-1}$.  Because $G_{\alpha \beta_1}$ has nontrivial layer (that is, the group generated by its subnormal quasisimple groups is nontrivial; see \cite{Aschbacher}), then, by \cite[Theorem 2.12]{VanBon}, $G_{\alpha \beta_1}^{[1]} = 1$.  Thus 
\[[G_\alpha^{[1]}, G_{\beta_i}^{[1]}] \le G_\alpha^{[1]} \cap G_{\beta_i}^{[1]} = G_{\alpha \beta_i}^{[1]} = 1,\]
 i.e., for each $i$, the elements of $G_{\alpha}^{[1]}$ and $G_{\beta_i}^{[1]}$ commute.  Since $G_\alpha^{[1]} \lhd G_{\alpha \beta_i}$, 
\[ G_\alpha^{[1]} \cong G_\alpha^{[1]}/(G_\alpha^{[1]} \cap G_{\beta_i}^{[1]}) \cong  \left(G_{\alpha}^{[1]}\right)^{\Gamma(\beta_i) \backslash \{\alpha\}} \lhd G_{\alpha \beta_i}^{\Gamma(\beta_i) \backslash \{\alpha\}}.\]
Since the only normal subgroups of $S_{m-1}$ when $m-1 \ge 5$ are $1$, $A_{m-1}$, and $S_{m-1}$, we conclude that $G_\alpha^{[1]}$ is isomorphic to one of $1$, $A_{m-1}$, or $S_{m-1}$.

\subsection*{5.1.1: The case where $G_{\alpha}^{[1]} \cong S_{m-1}$}\
\label{5.1S_m-1}

Suppose first that $G_{\alpha}^{[1]} \cong S_{m-1}$.  Define \[L_i := G_\alpha^{[1]}G_{\beta_i}^{[1]} \cong G_\alpha^{[1]} \times G_{\beta_i}^{[1]} \cong S_{m-1} \times S_{m-1}.\] We note that $L_i \le G_{\alpha \beta_i}$.  Since $G_{\alpha \beta_i}^{[1]} = 1$, we have
\[G_{\alpha \beta_i}/G_{\alpha}^{[1]} \cong G_{\alpha \beta_i}^{\Gamma(\alpha)} \cong S_{m-1},\] 
and so $|L_i| = |G_{\alpha \beta_i}|$ and hence $G_{\alpha \beta_i} = L_i$.  Moreover, when $m -1\ge 5$, $Z(S_{m-1}) = 1$; hence $\big(G_{\beta_i}^{[1]}\big)_{\beta_j}^{\Gamma(\beta_j) \backslash \{\alpha\}} = 1$.  In other words, for any $i \neq j$ we have \[\big(G_{\beta_i}^{[1]}\big)_{\beta_j} = \big(G_{\beta_j}^{[1]}\big)_{\beta_i} = G_{\beta_i}^{[1]} \cap G_{\beta_j}^{[1]}.\]  For each $j \ge 2$, we have 
\[G^{[1]}_{\beta_2 \dots \beta_{j-1}\beta_{j+1} \dots \beta_m} \cong S_2, \] and so we let $G^{[1]}_{\beta_2 \dots \beta_{j-1}\beta_{j+1} \dots \beta_m} = \langle g_{1,j}\rangle.$  If $\Gamma(\alpha) \cap \Gamma(\beta_1) \neq \varnothing$, since $(\Gamma, G)$ is locally-$S_m$, then $\Gamma \cong K_{m+1}$ and $G_\alpha^{[1]} = 1$, a contradiction.  Thus we may pick $\gamma_1 \in \Gamma(\beta_1) \backslash \{\alpha\}$, and we define $\gamma_i := \gamma_1^{g_{1,i}}.$  If $H = \langle g_{1,i} : 2 \le i \le m\rangle$, $H$ stabilizes $\M = \{ \{\beta_i, \gamma_i\} : 1 \le i \le m\}$ setwise, and $H^{\M}\cong S_{m}$, so $\M$ is an $H$-permutable $m$-matching, as desired.

\subsection*{5.1.2: The case where $G_{\alpha}^{[1]} \cong A_{m-1}$}\
\label{5.1A_m-1}

Suppose next that $G_{\alpha}^{[1]} \cong A_{m-1}$.  We define \[L_i := G_\alpha^{[1]}G_{\beta_i}^{[1]} \cong G_\alpha^{[1]} \times G_{\beta_i}^{[1]}\] as above, only now $L_i \cong A_{m-1} \times A_{m-1}$.  Since \[G_{\alpha \beta_i}^{\Gamma(\alpha)} \cong G_{\alpha \beta_i}/G_{\alpha}^{[1]} \cong S_{m-1},\] we have that $|G_{\alpha\beta_i}:L_i| = 2$.  As in the last case,
\[\big(G_{\beta_i}^{[1]}\big)_{\beta_j} = \big(G_{\beta_j}^{[1]}\big)_{\beta_i} = G_{\beta_i \beta_j}^{[1]} \cong A_{m-2}.\]
However, in this case, \[G_{\beta_3 \dots \beta_m}^{[1]} \cong A_2 = 1,\]
and so \[G_{\alpha \beta_3 \dots \beta_m} = G_{\beta_3 \dots \beta_m} \cong G_{\beta_3 \dots \beta_m}/G_{\beta_3 \dots \beta_m}^{[1]}\]
and \[G_{\alpha \beta_3 \dots \beta_m}^{\{\beta_1, \beta_2\}} \cong G_{\alpha \beta_3 \dots \beta_m}/G_{\alpha}^{[1]} \cong S_2.\]  Hence we choose $g \in G_{\alpha \beta_3 \dots \beta_m}$ such that $\beta_1^g = \beta_2$ and $\beta_2^g = \beta_1$.  We may also assume that $\gamma_3^g = \gamma_3$ for some $\gamma_3 \in \Gamma(\beta_3) \backslash \{\alpha\}$; otherwise, we replace $g$ by $gx$, where $x \in G_\alpha^{[1]}$; indeed, this in fact shows that we may assume that $g$ acts as a transposition on $\Gamma(\beta_3) \backslash \{\alpha\}$.  We also remark that $G_{\alpha \beta_i} = \langle L_i, g \rangle$ for $i \ge 3$.

Define \[H:= \langle G_{\beta_i}^{[1]} : 1 \le i \le m \rangle.\]  It is clear that $H \lhd G_\alpha$, and, since $G_\alpha^{[1]} \cap G_{\beta_i}^{[1]} = 1$ and $\big(G_{\beta_i}^{[1]}\big)_{\beta_{j}}=\big(G_{\beta_j}^{[1]}\big)_{\beta_{i}}=G_{\beta_i}^{[1]}\cap G_{\beta_j}^{[1]}$ for each $i$ and $j$, we have 
 \[H \cong H/(G_\alpha^{[1]} \cap H) \cong HG_{\alpha}^{[1]}/G_\alpha^{[1]} \lesssim G_{\alpha}^{\Gamma(\alpha)}.\]  
Moreover, $H \lhd G_\alpha$, so $H$ 
is isomorphic to a normal subgroup of $G_{\alpha}^{\Gamma(\alpha)}$.  Since $H$ has a nontrivial action on $\Gamma(\alpha)$, either $H \cong A_m$ or $H \cong S_m$.  

As in the previous case, $\Gamma(\alpha) \cap \Gamma(\beta_1) = \varnothing$.  We claim now that $H$ has $m-1$ orbits of size $m$ on \[D_2(\alpha) := \bigcup_{i=1}^m\limits \Gamma(\beta_i) \backslash \{\alpha\}.\] Suppose first that $x_i, y_i \in G_{\beta_i}^{[1]}$ and $\beta_k^{x_i} = \beta_k^{y_i}$.  This means $x_iy_i^{-1} \in \big(G_{\beta_i}^{[1]}\big)_{\beta_k} \le G_{\beta_k}^{[1]}$. Hence, if $\gamma \in \Gamma(\beta_k)$, then $\gamma^{x_iy_i^{-1}} = \gamma$ and $\gamma^{x_i} = \gamma^{y_i}$.  Now suppose $x_i \in G_{\beta_i}^{[1]}$, $x_j \in G_{\beta_j}^{[1]}$, and $\beta_k^{x_i} = \beta_k^{x_j} = \beta_\ell$.  There exists some $r \in \{1, \dots, m\} \backslash \{i,j,k,\ell\}$ and there exist $y_i \in G_{\beta_i}^{[1]}$ and $y_j \in G_{\beta_j}^{[1]}$ such that 
\[ \beta_k^{y_i} = \beta_k^{x_i} = \beta_k^{y_j} = \beta_k^{x_j} = \beta_\ell \text{ and } \beta_r^{y_i} = \beta_r^{y_j} = \beta_r.\]
Thus $y_i, y_j \in G_{\beta_r}^{[1]}$, and, if $\gamma \in \Gamma(\beta_k)$, then $\gamma^{y_i} = \gamma^{y_j}$ from what we just proved above.  Thus \[\gamma^{x_i} = \gamma^{y_i} = \gamma^{y_j} = \gamma^{x_j},\]
and so $H$ has exactly $m-1$ orbits of size $m$ on $D_2(\alpha).$ 

Now, define $X:= \langle H,g \rangle.$  Since $X$ is a $2$-transitive group on $\Gamma(\alpha)$ that contains a transposition, $X^{\Gamma(\alpha)} \cong S_m$. Now, $X \le G_\alpha$ and $H \lhd G_\alpha$, so $H$ is a normal subgroup of $X$.  This implies that the orbits of $H$ on  $D_2(\alpha)$ are an $X$-invariant partition, which we use to find our matching: indeed, suppose $\gamma^H$ is such an orbit.  Then 
\[(\gamma^H)^g = (\gamma^g)^H.\]  
Select the orbit $\gamma_3^H$, where $\gamma_3 \in \Gamma(\beta_3)$ and $\gamma_3^g = \gamma_3$ as above.  Define $\gamma_{i}:=\gamma_{3}^{H}\cap\Gamma(\beta_{i})$.  This implies that $\gamma_3^X = \gamma_3^H$, and hence $X$ stabilizes $\M = \{ \{\beta_i, \gamma_i\} : 1 \le i \le m\}$ setwise and $\M$ is an $X$-permutable $m$-matching, as desired. 

\subsection*{5.1.3: The case where $G_{\alpha}^{[1]} \cong 1$}\
\label{5.1trivial}

The final case is when $G_\alpha^{[1]} = 1$.  This implies that $G_\alpha \cong S_{m}$ and $G_{\alpha \beta_1} \cong S_{m-1}$ with a faithful action on each of $\Gamma(\alpha) \backslash \{\beta_1\}$ and $\Gamma(\beta_1) \backslash \{\alpha\}$.  Hence $G_{\alpha \beta_1 \beta_2}$ fixes a vertex in $\Gamma(\beta_1) \backslash \{\alpha\}$.  Moreover, since $\Gamma$ has degree $m$ and $(\Gamma,G)$ is locally-$S_m$, $\Gamma$ is a $(G,2)$-arc-transitive graph.  By Lemma \ref{lem:zhounear}, $\Gamma$ is near-polygonal, as desired.
\end{proof}

We can now prove Theorem \ref{thm:mvalent}, which essentially characterizes arc-transitive, $G$-locally primitive graphs of degree $m$ with a permutable $m$-matching.

\begin{proof}[Proof of Theorem \ref{thm:mvalent}]
Suppose that $\Gamma$ is a $G$-arc-transitive, $G$-locally primitive graph with degree $m \ge 6$ that contains a $G$-permutable $m$-matching $\M$ such that $G$ contains an intransitive normal subgroup $N$ that has more than two orbits of vertices.  Let $\{\alpha, \beta\}$ be an edge of $\M$, let $A$ be the $N$-orbit containing $\alpha$, and let $B_1$ be the $N$-orbit containing $\beta$.  Since $G$ is edge-transitive and the $N$-orbits of vertices are $G$-invariant, all edges of $\Gamma$ are between $N$-orbits; that is, if $\{\gamma, \delta\} \in E(\Gamma)$, then $\gamma, \delta$ are in different $N$-orbits.  Thus $A \neq B_1$.  Up to relabeling, there are three possibilities for $\{\gamma, \delta\}$, where $\{\gamma, \delta\}$ is another edge of $\M$:
\begin{itemize}
	\item[(i)] Neither $\gamma$ nor $\delta$ is in either $A$ or $B_1$.
	\item[(ii)] $\gamma \in A$, $\delta \not\in B_1$.
	\item[(iii)] $\gamma \in A$, $\delta \in B_1$.
\end{itemize}

\subsection*{5.2.1: Neither $\gamma$ nor $\delta$ is in either $A$ or $B_1$}\
\label{5.2.1}

Since $\{\alpha, \beta\}, \{\gamma, \delta\} \in \M$, the four vertices $\alpha$, $\beta$, $\gamma$, $\delta$ are in distinct $N$-orbits, and, since $\M$ is a $G$-permutable $m$-matching, no two vertices of $V(\M)$ are in the same $N$-orbit.  We may thus view the action of $G_\M$ on $V(\M)$ as an action on the $N$-orbits containing the vertices. This means there will be a $G/N$-permutable $m$-matching in the quotient graph $\Gamma_N$, and we are done.

\subsection*{5.2.2: $\gamma \in A$, $\delta \not\in B_1$}\
\label{5.2.2}

Here, $\M$ is of the form $$\left\{ \{\alpha_1, \beta_1\} = \{\alpha, \beta\}, \{\alpha_2, \beta_2\}, \dots, \{\alpha_m, \beta_m\} \right\},$$ where $\alpha_i \in A$ and $\beta_i \in B_i$ for each $i$.  Since $\M$ is permutable, this implies that $B_1$, \dots, $B_m$ are distinct $N$-orbits.  Moreover, if $\mathcal{B} = \{\beta = \beta_1, \beta_2, \dots, \beta_m\}$, since each edge contains a vertex in the $N$-orbit $A$ and $\M$ is permutable, the stabilizer of $A$ in $G$ acts as the full symmetric group on $\mathcal{B}$, that is, $G_{A\mathcal{B}}^{\mathcal{B}} \cong S_m$.  Moreover, since $\Gamma$ is $G$-locally primitive of degree $m$, $(\Gamma_N,G/N)$ is locally-$S_m$, the neighbors of the vertex $A$ of $\Gamma_N$ are precisely $B_1, \dots, B_m$, and the vertex $\alpha$ has a unique neighbor in each of $B_1, \dots, B_m$.  By Lemma \ref{lem:locallySm}, the result follows.

\subsection*{5.2.3: $\gamma \in A$, $\delta \in B_1$}\
\label{5.2.3}

Now, $\M$ is entirely contained within the $N$-orbits $A$ and $B_1$, i.e. $\M$ is of the form 
\[\left\{ \{\alpha_1, \beta_1\} = \{\alpha, \beta\}, \{\alpha_2, \beta_2\}, \dots, \{\alpha_m, \beta_m\} \right\},\]
where $\alpha_i \in A$ and $\beta_i \in B_1$ for all $i$.   Moreover, since $\Gamma$ is $G$-locally primitive, the induced subgraph $\Gamma[V(A) \cup V(B_1)]$ is a matching.

Since $\Gamma$ is connected, we may select a path $P_2$ from $\alpha_1$ to $\alpha_2$, say \[P_2 = (\alpha_1, \gamma_{1,1}, \gamma_{1,2}, \dots, \gamma_{1,k}, \alpha_2).\]  For each $i$, let the vertex $\gamma_{1,i}$ lie in the $N$-orbit $C_{1,i}$.  For each $i$, consider the orbit $C_{1,i}^{G_{\M}}$, and let $C_{1,0}:= A$.  If, for any $i$, $|C_{1,i}^{G_\M}| > 1$, then, if $\ell$ is the least such $i$, $|C_{1,\ell}^{G_\M}| = m$ (since $\M$ is a $G$-permutable $m$-matching) and $|C_{1,\ell -1}^{G_\M}| = 1$.  This implies further that $(\Gamma_N,G/N)$ is locally-$S_m$, and the result follows by Lemma \ref{lem:locallySm}.  Finally, if $|C_{1,i}^{G_\M}| = 1$ for all $i$, then $\Gamma$ cannot be connected without some vertex having more than one neighbor in an $N$-orbit, a contradiction to the $G$-local primitivity of $\Gamma$.

Therefore, in any case either $\Gamma_N$ contains a $G/N$-permutable $m$-matching or $\Gamma_N$ is a near-polygonal graph with $\left(G/N\right)_A \cong S_m$.
\end{proof}

As discussed after the statement of Theorem \ref{thm:mvalent}, this provides a characterization of graphs with degree $m$ containing a permutable $m$-matching, in the sense that under these conditions, one can keep taking normal quotients of this graph until reaching either a graph with a permutable $m$-matching or a near-polygonal graph where the stabilizer of a vertex acts on its $m$ neighbors like $S_{m}$. Moreover, Theorem \ref{thm:mvalent} is a best-possible characterization in the sense that graphs in each case do exist.  When combined with Construction \ref{const:cover}, Theorem \ref{thm:oddgraphs} shows that for any $m \ge 6$ there exists a connected $G$-arc-transitive, $G$-locally-primitive graph with a $G$-permutable $m$-matching such that $G$ has an intransitive normal subgroup $N$ with more than two orbits of vertices such that $\Gamma_N$ is $G/N$-vertex-quasiprimitive and contains a $G/N$-permutable $m$-matching.  When combined with Construction \ref{const:cover}, Proposition \ref{prop:compbip} shows that for any $m \ge 6$ there exists a connected $G$-arc-transitive, $G$-locally-primitive graph with a $G$-permutable $m$-matching such that $G$ has an intransitive normal subgroup $N$ that has more than two orbits on vertices such that $\Gamma_N$ is $G/N$-vertex-biquasiprimitive and contains a $G/N$-permutable $m$-matching.  Finally, Corollary \ref{cor:nearpolygmatchings} shows that for any $m \ge 6$ there exists a connected $G$-arc-transitive, $G$-locally-primitive graph with a $G$-permutable $m$-matching such that $G$ contains an intransitive normal subgroup $N$ that has more than two orbits on vertices, where $\Gamma_N$ is near polygonal, $(\Gamma_N, G/N)$ is locally-$S_m$, but $\Gamma_N$ does not contain a permutable $m$-matching.


\section{A classification of graphs with a 2-transitive perfect matching}
\label{sect:classification}

This section is devoted to the proof of Theorem \ref{thm:2transperfect}, which classifies the connected graphs that contain a $2$-transitive perfect matching of size $m$. Throughout this section, we will use the following notation.  We define $\Gamma$ to be a graph with a perfect matching $\M$ of $m$ edges such that $\Aut(\Gamma)$ is $2$-transitive on $\M$.  This implies that $|V(\Gamma)| = 2m$ and $V(\Gamma) = V(\M)$.  We write $\M$ as follows:
\[\M=\left\{e_{i}=\{\alpha_{i},\beta_{i}\}\right\}_{i=1}^{m}.\]
We also define $M \le \Aut(\Gamma)$ to be the subgroup of $\Aut(\Gamma)$ preserving $\M$ setwise, i.e., $M := \Aut(\Gamma)_\M$.

We begin with the following observation, which allows us to subdivide the problem into cases.

\begin{lem}
\label{lem:induced}
For any $i,j$ such that $1 \le i < j \le m$, $\Gamma[\alpha_i, \beta_i, \alpha_j, \beta_j] \cong \Gamma[\alpha_1, \beta_1, \alpha_2, \beta_2]$. 
\end{lem}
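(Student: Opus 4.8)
The plan is to deduce the claim directly from the hypothesis that $M := \Aut(\Gamma)_\M$ is $2$-transitive on the edges of $\M$. The guiding observation is that an automorphism of $\Gamma$ preserves not only adjacency but also non-adjacency, so its restriction to any set of vertices is an isomorphism of induced subgraphs onto the image. Thus, if I can produce a single automorphism sending the two-edge set $\{e_1, e_2\}$ onto the two-edge set $\{e_i, e_j\}$, the result will follow immediately.

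First I would invoke $2$-transitivity: since $M$ is $2$-transitive on the set $\{e_1, \dots, e_m\}$, there exists $g \in M$ with $e_1^g = e_i$ and $e_2^g = e_j$. Because $g$ is a graph automorphism that stabilizes $\M$ setwise, it permutes the vertices within each edge; in particular $g$ maps the two endpoints $\{\alpha_1, \beta_1\}$ of $e_1$ bijectively onto the endpoints $\{\alpha_i, \beta_i\}$ of $e_i$, and likewise $\{\alpha_2, \beta_2\}$ onto $\{\alpha_j, \beta_j\}$. Hence $g$ restricts to a bijection from the vertex set $\{\alpha_1, \beta_1, \alpha_2, \beta_2\}$ onto the vertex set $\{\alpha_i, \beta_i, \alpha_j, \beta_j\}$.

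It then remains to note that, since $g \in \Aut(\Gamma)$, two vertices $u, v \in \{\alpha_1, \beta_1, \alpha_2, \beta_2\}$ satisfy $u \sim v$ if and only if $u^g \sim v^g$. Consequently the restriction of $g$ is an isomorphism $\Gamma[\alpha_1, \beta_1, \alpha_2, \beta_2] \cong \Gamma[\alpha_i, \beta_i, \alpha_j, \beta_j]$, which is exactly the assertion.

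I do not expect any genuine obstacle here: the statement is essentially the formalization of the remark that $2$-transitivity on $\M$ forces all ``two-edge patterns'' of the matching to look alike. The only point requiring a word of care is the bookkeeping that an automorphism stabilizing $\M$ sends edges to edges and hence endpoint-pairs to endpoint-pairs, so that the four named vertices really do map among the four named target vertices; this is where the fact that $\M$ is a matching (so the four vertices are distinct and the edges are determined by their endpoint pairs) is used.
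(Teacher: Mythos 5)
Your proof is correct and is exactly the paper's argument, merely spelled out in full detail: the paper's own proof is the one-line observation that the claim ``follows immediately from the $2$-transitivity of $\Aut(\Gamma)$ on $\M$,'' which is precisely the automorphism $g$ you construct and restrict to the four vertices. No gaps; your expansion of why the restriction of $g$ is an induced-subgraph isomorphism is the routine bookkeeping the paper leaves implicit.
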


\begin{proof}
This follows immediately from the $2$-transitivity of $\Aut(\Gamma)$ on $\M$. 
\end{proof}

\begin{lem}
\label{lem:atleastone}
Let $\{\alpha_i, \beta_i\} \in \M$.  Each other edge of $\M$ has at least one endpoint adjacent to either $\alpha_i$ or $\beta_i$.
\end{lem}

\begin{proof}
 Assume that $\M$ contains more than one edge, and let $e_i = \{\alpha_i, \beta_i\} \in \M$.  Since $\Gamma$ is connected, either $\alpha_i$ or $\beta_i$ has another neighbor, say $\gamma$.  Since $\M$ is a perfect matching, $\gamma$ is $\alpha_j$ or $\beta_j$ for some $j$.  Since there is at least one edge from an endpoint of $e_i$ to an endpoint of $e_j$, the result follows by Lemma \ref{lem:induced}.
\end{proof}

We now subdivide the problem based on the induced subgraph $\Gamma[\alpha_1,\beta_1,\alpha_2, \beta_2]$.  

\begin{lem}
 \label{lem:div}
If $\Gamma$ has a matching $\M$ such that $\Aut(\Gamma)_\M$ is $2$-transitive on the edges of $\M$, then the induced subgraph $\Gamma[\alpha_1,\beta_1,\alpha_2, \beta_2]$ will be isomorphic to one of $K_4$, $C_4$, $K_4 \backslash \{e\}$, $P_4$, or a triangle with a pendant edge. 
\end{lem}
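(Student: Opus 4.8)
The plan is to analyze the possible induced subgraphs on the four vertices $\{\alpha_1, \beta_1, \alpha_2, \beta_2\}$ by a systematic edge-counting argument. Since $e_1 = \{\alpha_1, \beta_1\}$ and $e_2 = \{\alpha_2, \beta_2\}$ are both edges of $\M$, these two edges are always present in the induced subgraph. The only freedom lies in which of the four \emph{cross edges} — $\{\alpha_1, \alpha_2\}$, $\{\alpha_1, \beta_2\}$, $\{\beta_1, \alpha_2\}$, $\{\beta_1, \beta_2\}$ — are present. Thus a priori there are $2^4 = 16$ possible induced subgraphs, and the goal is to cut these down to the five listed graphs (up to isomorphism) using the symmetry constraints.

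First I would invoke Lemma \ref{lem:atleastone} to rule out the case of zero cross edges: since every edge of $\M$ has an endpoint adjacent to an endpoint of every other edge, at least one of the four cross edges must be present. This immediately eliminates the subgraph $2 \cdot K_2$ (the disjoint union, $0$ cross edges). The remaining work is to classify the graphs arising from $1, 2, 3,$ or $4$ cross edges up to the symmetries available. The key symmetry to exploit is that the setwise stabilizer $M$ of $\M$ in $\Aut(\Gamma)$ acts $2$-transitively on $\M$, so in particular it can swap $e_1$ and $e_2$; moreover, as an automorphism group it can potentially swap the endpoints $\alpha_i \leftrightarrow \beta_i$ within an edge. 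These symmetries mean I should classify the cross-edge configurations up to the natural symmetry group acting on the two edges, namely the group generated by swapping the two edges and (where consistent with being an automorphism) swapping endpoints within an edge.

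Carrying out the case analysis by number of cross edges: with $4$ cross edges, all pairs are joined, giving $K_4$. With $3$ cross edges (one cross edge missing), we obtain $K_4 \backslash \{e\}$. With $1$ cross edge, the single cross edge plus the two matching edges form a path $P_4$ (three edges in a path) — here the cross edge joins one endpoint of $e_1$ to one endpoint of $e_2$, and since the other endpoints are pendant, this is $P_4$. The genuinely interesting case is $2$ cross edges, where I must distinguish the configurations: if the two cross edges share a common vertex (say both incident to $\alpha_1$), then together with $e_1$ and $e_2$ we obtain a triangle with a pendant edge; if the two cross edges are ``parallel'' (e.g. $\{\alpha_1,\alpha_2\}$ and $\{\beta_1,\beta_2\}$), we obtain the $4$-cycle $C_4$; and if they are ``crossing'' (e.g. $\{\alpha_1,\beta_2\}$ and $\{\beta_1,\alpha_2\}$), we also obtain $C_4$, which is isomorphic to the parallel case. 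The remaining $2$-cross-edge subcase where both cross edges are incident to a single vertex of $e_2$ but different vertices of $e_1$ likewise yields the triangle-with-pendant. I would collect these into exactly two isomorphism types ($C_4$ and the triangle with a pendant edge) for the two-cross-edge case.

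The main obstacle I anticipate is the bookkeeping in the two-cross-edge case: I must be careful to verify that the various configurations that \emph{look} different as labeled graphs collapse to only two isomorphism classes, and that no additional symmetry constraint from $2$-transitivity forbids any of the surviving types. In particular, I should confirm that each of the five claimed graphs is actually \emph{realizable} as $\Gamma[\alpha_1,\beta_1,\alpha_2,\beta_2]$ under the hypothesis — but in fact the lemma only asserts that the induced subgraph is \emph{isomorphic to one of} these five, so I need only establish that the list is exhaustive, not that each occurs. Hence the proof reduces to the finite enumeration above, where Lemma \ref{lem:atleastone} removes the empty case and the remaining $15$ labeled configurations fall into the five named isomorphism types. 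I expect the argument to be short once the enumeration is organized by number of cross edges.
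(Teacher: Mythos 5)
Your proposal is correct and follows essentially the same route as the paper: the paper's proof is precisely ``exhaust the graphs on four vertices containing the two matching edges, using Lemma \ref{lem:atleastone} to exclude the configuration with no cross edges,'' which is exactly your enumeration organized by the number of cross edges. Your write-up simply makes explicit the case analysis (collapsing the six two-cross-edge configurations into $C_4$ and the triangle with pendant edge) that the paper leaves to the reader and records in Figure \ref{fig:1}.
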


\begin{proof}
 This follows from Lemmas \ref{lem:induced} and \ref{lem:atleastone} and exhausting the graphs on four vertices.  See Figure \ref{fig:1} for these induced subgraphs.
\end{proof}

\begin{figure}[h!]
\begin{multicols}{3}
\begin{center}
\begin{tikzpicture}[scale=0.3]
\node[fill, shape=circle] (a1) at (0,8) {};
\node[fill, shape=circle] (b1) at (0,0) {};
\node[fill, shape=circle] (a2) at (5,8) {};
\node[fill, shape=circle] (b2) at (5,0) {};

\draw[line width=2pt] (a1)--(b1) node[pos=.5, left] {$e_{1}$};
\draw[line width=2pt] (a2)--(b2) node[pos=.5, right] {$e_{2}$};
\draw[line width=2pt] (a1)--(a2) ;
\draw[line width=2pt] (a1)--(b2) ;
\draw[line width=2pt] (b1)--(a2) ;
\draw[line width=2pt] (b1)--(b2) ;
\end{tikzpicture}
\end{center}
\begin{center}
\begin{tikzpicture}[scale=0.3]
\node[fill, shape=circle] (a1) at (0,8) {};
\node[fill, shape=circle] (b1) at (0,0) {};
\node[fill, shape=circle] (a2) at (5,8) {};
\node[fill, shape=circle] (b2) at (5,0) {};

\draw[line width=2pt] (a1)--(b1) node[pos=.5, left] {$e_{1}$};
\draw[line width=2pt] (a2)--(b2) node[pos=.5, right] {$e_{2}$};
\draw[line width=2pt] (a1)--(a2) ;
\draw[line width=2pt] (b1)--(b2) ;
\end{tikzpicture}
\end{center}
%

\begin{center}
\begin{tikzpicture}[scale=0.3]
\node[fill, shape=circle] (a1) at (0,8) {};
\node[fill, shape=circle] (b1) at (0,0) {};
\node[fill, shape=circle] (a2) at (5,8) {};
\node[fill, shape=circle] (b2) at (5,0) {};

\draw[line width=2pt] (a1)--(b1) node[pos=.5, left] {$e_{1}$};
\draw[line width=2pt] (a2)--(b2) node[pos=.5, right] {$e_{2}$};
\draw[line width=2pt] (a1)--(a2) ;
\draw[line width=2pt] (a1)--(b2) ;
\draw[line width=2pt] (b1)--(b2) ;
\end{tikzpicture}
\end{center}

\end{multicols}

\begin{multicols}{2}
\begin{center}
\begin{tikzpicture}[scale=0.3]
\node[fill, shape=circle] (a1) at (0,8) {};
\node[fill, shape=circle] (b1) at (0,0) {};
\node[fill, shape=circle] (a2) at (5,8) {};
\node[fill, shape=circle] (b2) at (5,0) {};

\draw[line width=2pt] (a1)--(b1) node[pos=.5, left] {$e_{1}$};
\draw[line width=2pt] (a2)--(b2) node[pos=.5, right] {$e_{2}$};
\draw[line width=2pt] (a1)--(a2) ;
\draw[line width=2pt] (a1)--(b2) ;
\end{tikzpicture}
\end{center}
\begin{center}
\begin{tikzpicture}[scale=0.3]
\node[fill, shape=circle] (a1) at (0,8) {};
\node[fill, shape=circle] (b1) at (0,0) {};
\node[fill, shape=circle] (a2) at (5,8) {};
\node[fill, shape=circle] (b2) at (5,0) {};

\draw[line width=2pt] (a1)--(b1) node[pos=.5, left] {$e_{1}$};
\draw[line width=2pt] (a2)--(b2) node[pos=.5, right] {$e_{2}$};
\draw[line width=2pt] (a1)--(b2) ;
\end{tikzpicture}
\end{center}
\end{multicols}
\caption{The possibilities for $\Gamma[\alpha_1,\beta_1, \alpha_2, \beta_2]$}
\label{fig:1}
\end{figure}
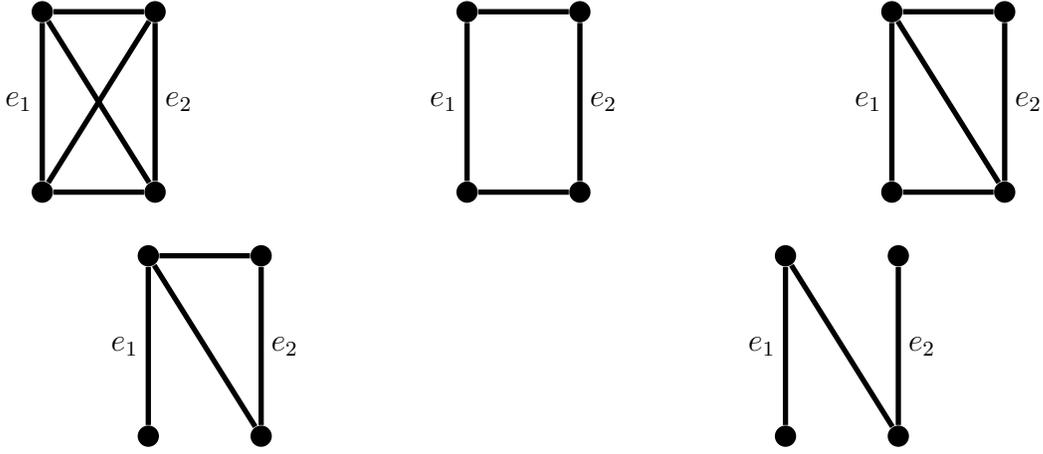

We consider these cases one by one.

\begin{lem}\label{lem:22case}
If $\Gamma[\alpha_{1},\beta_{1},\alpha_{2},\beta_{2}]\cong K_{4}$, then $\Gamma\cong K_{2m}$.
\end{lem}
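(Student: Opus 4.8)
The plan is to propagate the $K_4$ structure from the single pair $\{e_1,e_2\}$ to every pair of edges of $\M$ using Lemma \ref{lem:induced}, and then to observe that this forces all pairs of vertices of $\Gamma$ to be adjacent.

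First I would invoke Lemma \ref{lem:induced}: since $\Gamma[\alpha_1,\beta_1,\alpha_2,\beta_2] \cong K_4$ and every induced subgraph on the endpoints of two edges of $\M$ is isomorphic to this one, we obtain $\Gamma[\alpha_i,\beta_i,\alpha_j,\beta_j] \cong K_4$ for all $1 \le i < j \le m$. In particular, for each such pair $i,j$, all four cross-edges $\{\alpha_i,\alpha_j\}$, $\{\alpha_i,\beta_j\}$, $\{\beta_i,\alpha_j\}$, and $\{\beta_i,\beta_j\}$ lie in $E(\Gamma)$.

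Next I would check that every pair of distinct vertices of $\Gamma$ is adjacent. Because $\M$ is a perfect matching, $V(\Gamma) = \bigcup_{i=1}^m \{\alpha_i,\beta_i\}$, so any two distinct vertices $u,v$ either are the two endpoints of a common matching edge $e_i$, in which case $\{u,v\}=e_i \in E(\Gamma)$, or they lie in two distinct edges $e_i,e_j$ with $i \neq j$, in which case the previous paragraph exhibits $\{u,v\}$ as one of the cross-edges and hence $\{u,v\} \in E(\Gamma)$. Either way $u \sim v$, so all $\binom{2m}{2}$ pairs are edges and $\Gamma \cong K_{2m}$.

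I do not expect a genuine obstacle here: the entire content is the observation, furnished by Lemma \ref{lem:induced}, that the local $K_4$ picture on one pair of matching edges is in fact global. The only point worth stating explicitly is that the $2m$ vertices are exhausted by the endpoints of $\M$, so that the dichotomy ``same matching edge / distinct matching edges'' really is exhaustive and the conclusion covers every pair of vertices.
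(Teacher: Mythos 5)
Your proof is correct and follows essentially the same route as the paper's: both reduce to the dichotomy that two distinct vertices either share a matching edge or lie on distinct edges of $\M$, and in the latter case invoke Lemma \ref{lem:induced} to see they are adjacent. Your write-up merely makes explicit the intermediate step that every pair $\Gamma[\alpha_i,\beta_i,\alpha_j,\beta_j]$ is a $K_4$, which the paper leaves implicit.
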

\begin{proof}
Consider any two vertices $\gamma, \delta \in V(\Gamma)$. In $\M$,  either $\gamma$ and $\delta$ are matched or not. If they are matched, they are the endpoints of some $e_{i}$. If not, one is an endpoint of some $e_{i}$ and the other is an endpoint of some $e_{j}$. But by Lemma \ref{lem:induced}, $\gamma$ and $\delta$ are adjacent in this case as well. Then every pair of vertices is adjacent and $\Gamma\cong K_{2m}$.
\end{proof}

\begin{lem}\label{lem:20case}
There is no graph $\Gamma$ such that $\Gamma[\alpha_{1},\beta_{1},\alpha_{2},\beta_{2}]$ is a triangle with a pendant edge.
\end{lem}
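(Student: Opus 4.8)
The plan is to derive a contradiction from the asymmetry of the triangle-with-pendant graph against the $2$-transitivity of $\Aut(\Gamma)$ on $\M$. Suppose, for contradiction, that $\Gamma[\alpha_1,\beta_1,\alpha_2,\beta_2]$ is a triangle with a pendant edge (see Figure~\ref{fig:1}). This four-vertex graph has degree sequence $(3,2,2,1)$, so it has a unique vertex of degree $1$; this pendant vertex is an endpoint of exactly one of the two matching edges, and after relabeling we may assume it is $\beta_1$, so that $e_1=\{\alpha_1,\beta_1\}$ has $\beta_1$ pendant and $\alpha_1$ in the triangle, while both endpoints $\alpha_2,\beta_2$ of $e_2$ lie in the triangle.

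First I would invoke $2$-transitivity to obtain an automorphism $g\in\Aut(\Gamma)$ with $(e_1,e_2)^g=(e_2,e_1)$, i.e. $e_1^g=e_2$ and $e_2^g=e_1$. Because $g$ interchanges $e_1$ and $e_2$, it preserves the four-element set $S:=e_1\cup e_2=\{\alpha_1,\beta_1,\alpha_2,\beta_2\}$, and since $g$ preserves adjacency in $\Gamma$, its restriction $g|_S$ is an automorphism of the induced subgraph $\Gamma[S]$.

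The key step is the rigidity of $\Gamma[S]$. Any automorphism of $\Gamma[S]$ must preserve the degrees computed inside $\Gamma[S]$, and hence must fix the unique degree-$1$ vertex $\beta_1$ (only the two degree-$2$ vertices $\alpha_2,\beta_2$ are free to be interchanged). Thus $\beta_1^{g}=\beta_1$. On the other hand, $g$ interchanges the two matching edges, so $\beta_1^{g}\in e_1^g=e_2=\{\alpha_2,\beta_2\}$, which is disjoint from $e_1$; in particular $\beta_1^{g}\neq\beta_1$. This contradiction completes the argument.

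I expect no serious obstacle here: the proof is a short rigidity-versus-transitivity contradiction. The only point demanding care is the verification that $g|_S$ is genuinely an automorphism of $\Gamma[S]$ (which rests on $S=e_1\cup e_2$ being $g$-invariant) together with the correct reading of the internal degrees of $\Gamma[S]$, so that the pendant vertex is forced to be fixed. It is precisely this forced fixed point---absent in the $P_4$ case, whose automorphism interchanging the two matching edges is fixed-point-free---that rules out the triangle-with-pendant configuration.
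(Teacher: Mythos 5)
Your proof is correct and takes essentially the same approach as the paper: both obtain, from $2$-transitivity, an automorphism $g$ swapping $e_1$ and $e_2$, and then contradict the asymmetry of the induced subgraph $\Gamma[\alpha_1,\beta_1,\alpha_2,\beta_2]$. The only cosmetic difference is that you package the contradiction via degree-preservation (the unique pendant vertex must be fixed, yet $g$ moves it into $e_2$), whereas the paper chases a single adjacency ($\alpha_1\sim\beta_2$ forces $\alpha_1^g\sim\beta_1$, impossible); both are the same rigidity-versus-transitivity argument.
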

\begin{proof}
Without loss of generality, we let $\alpha_1$ be the vertex with degree $3$ and $\beta_1$ be the vertex with degree $1$.  By the $2$-transitivity of $\Aut(\Gamma)$ on $\M$, there is a $g\in \Aut(\Gamma)$ such that $\{\alpha_{1},\beta_{1}\}^{g}=\{\alpha_{2},\beta_{2}\}$ and $\{\alpha_{2},\beta_{2}\}^{g}=\{\alpha_{1},\beta_{1}\}$. Without loss of generality, $\beta_{2}^{g}=\beta_{1}$ and $\alpha_{2}^{g}=\alpha_{1}$. But because $\alpha_{1}\sim\beta_{2}$, $\alpha_{1}^{g}\sim\beta_{2}^{g}$, we have $\alpha_{1}^{g}\sim\beta_{1}$. But $\alpha_{1}^{g}\in\{\alpha_{2},\beta_{2}\}$, so we have a contradiction.
\end{proof}

\subsection{The case $\Gamma[\alpha_1, \beta_1, \alpha_2, \beta_2] \cong C_4$}

In order to characterize the graphs when the induced subgraph $\Gamma[\alpha_1, \beta_1, \alpha_2, \beta_2]$ is isomorphic to $C_4$, we first need some preliminary results.  A vertex $\gamma \in V(\Gamma)$ is contained in a unique edge $e_i$ of $\M$, so we define $\gamma^c := V(e_i) \backslash \{\gamma\}$, i.e., $\gamma^c$ is the unique vertex adjacent to $\gamma$ in the matching $\M$.  

\begin{lem}
 \label{lem:flip}
 Assume $\Gamma[\alpha_1, \beta_1, \alpha_2, \beta_2] \cong C_4$.  Let $x \in \Sym(V(\Gamma))$ be the permutation of the vertices of $\Gamma$ defined by $\gamma^x = \gamma^c$ for all $\gamma \in V(\M)=V(\Gamma)$, that is, $\alpha_i^x = \beta_i$ and $\beta_i^x = \alpha_i$ for all $i$.  Then $x \in Z(M)$.
\end{lem}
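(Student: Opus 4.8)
The plan is to establish two separate facts: that $x$ is a genuine automorphism of $\Gamma$ fixing $\M$ setwise, so that $x \in M$, and then that $x$ commutes with every element of $M$. The first fact is exactly where the hypothesis $\Gamma[\alpha_1,\beta_1,\alpha_2,\beta_2]\cong C_4$ is used; the centrality, by contrast, will be a formal consequence of $x$ being defined purely in terms of $\M$.

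First I would show $x\in M$. Since $x$ fixes each edge $e_i=\{\alpha_i,\beta_i\}$ setwise, it certainly preserves $\M$, so it suffices to check that $x\in\Aut(\Gamma)$, i.e.\ that $x$ preserves adjacency. Any pair of distinct vertices either forms an edge of $\M$ (which $x$ fixes setwise) or lies in two distinct matching edges $e_i,e_j$; in the latter case both the pair and its image under $x$ lie in $\Gamma[\alpha_i,\beta_i,\alpha_j,\beta_j]$, which by Lemma \ref{lem:induced} is isomorphic to $C_4$. So it is enough to verify that $x$ restricts to an automorphism of each such $C_4$. In that $C_4$ the matching edges $e_i,e_j$ are vertex-disjoint, hence opposite edges of the $4$-cycle, so the two remaining edges form a perfect matching between $\{\alpha_i,\beta_i\}$ and $\{\alpha_j,\beta_j\}$: they are either $\{\alpha_i\alpha_j,\beta_i\beta_j\}$ or $\{\alpha_i\beta_j,\beta_i\alpha_j\}$. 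In either case the involution $x=(\alpha_i\,\beta_i)(\alpha_j\,\beta_j)$ interchanges the two cross-edges (and interchanges the two cross non-edges), so adjacency is preserved. Thus $x$ preserves every adjacency of $\Gamma$, and $x\in M$.

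Next I would show that $x$ centralizes $M$. Given $g\in M$ and any vertex $\gamma$, I compute $\gamma^{xg}=(\gamma^c)^g$ and $\gamma^{gx}=(\gamma^g)^c$, so it suffices to prove $(\gamma^c)^g=(\gamma^g)^c$. But $\gamma$ and $\gamma^c$ are the two endpoints of a single edge of $\M$, and because $g\in M$ preserves $\M$ setwise, $g$ carries this edge to another edge of $\M$ whose endpoints are $\gamma^g$ and $(\gamma^c)^g$. Hence $(\gamma^c)^g$ is precisely the matching partner of $\gamma^g$, namely $(\gamma^g)^c$. Therefore $xg=gx$ for every $g\in M$, and together with $x\in M$ this gives $x\in Z(M)$.

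The only real obstacle is the adjacency check in the second paragraph, and this is exactly where the $C_4$ assumption is indispensable: for the other candidate induced subgraphs allowed by Lemma \ref{lem:div} the flip $x$ need not preserve adjacency, so it need not even lie in $\Aut(\Gamma)$. Once membership $x\in M$ is secured, centrality requires no further structural input, since it rests only on the fact that every element of $M$ commutes with the matching-partner map $\gamma\mapsto\gamma^c$ that defines $x$.
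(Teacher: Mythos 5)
Your proposal is correct and follows essentially the same route as the paper: first establish $x \in \Aut(\Gamma)$ (hence $x \in M$) using the $C_4$ structure of the induced subgraph on any pair of matching edges, then deduce $x \in Z(M)$ from the purely formal fact that every element of $M$ commutes with the matching-partner map $\gamma \mapsto \gamma^c$. Your explicit case analysis of the two possible cross-edge configurations is just a more detailed rendering of the paper's observation that, for non-partner vertices, $\gamma \sim \delta$ forces $\gamma^c \sim \delta^c$ (and likewise for non-edges), so the two arguments coincide in substance.
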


\begin{proof}
 We first need to show that $x \in M$; that is, we need to show that $x \in \Aut(\Gamma)$ and $x$ preserves the matching $\M$ setwise.  Suppose $\gamma, \delta \in V(\Gamma)$ and $\gamma \sim \delta$.  If $\delta = \gamma^c$, then $\gamma^x = \delta$ and $\delta^x = \gamma$, and so $\gamma^x \sim \delta^x$.  If $\delta \neq \gamma^c$, then, since $\Gamma[\alpha_1, \beta_1, \alpha_2, \beta_2] \cong C_4$ and $\gamma \sim \delta$, we have that $\gamma^c \sim \delta^c$, and hence $\gamma^x \sim \delta^x$.  Since $x$ is a permutation of the vertices of a finite graph $\Gamma$ mapping edges to edges, $x \in \Aut(\Gamma)$.  Since $x$ fixes each edge $e_i$, $x \in M$.
 
 We will now show that $x \in Z(M)$.  Let $g \in M$.  For any $\gamma \in V(\Gamma)$, we have:
\begin{align*}
\gamma^{gxg^{-1}} & = ((\gamma^{g})^{x})^{g^{-1}}\\
 & = ((\gamma^{g})^{c})^{g^{-1}}\\
 & = (\gamma^{c})^{gg^{-1}}\\
 & = \gamma^{c}\\ 
 & = \gamma^{x}.
\end{align*}
Therefore, $gxg^{-1} = x$ for all $g \in M$, and so $x \in Z(M)$. 
\end{proof}

\begin{lem}
\label{lem:vertstab2trans}
 Assume $\Gamma[\alpha_1, \beta_1, \alpha_2, \beta_2] \cong C_4$.  For $\gamma \in V(\Gamma)$, if $e$ is the edge of $\M$ containing $\gamma$, then $M_\gamma$ is transitive on $\M \backslash \{ e \}$.  
\end{lem}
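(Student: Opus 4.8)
The plan is to exploit the central involution $x$ produced in Lemma~\ref{lem:flip} to ``repair'' edge-stabilizing elements that fail to fix the chosen vertex. By the $2$-transitivity of $M$ on $\M$, the setwise stabilizer $M_e$ of the edge $e$ acts transitively on $\M \setminus \{e\}$; the only obstruction to concluding is that an element of $M_e$ might interchange the two endpoints of $e$ rather than fix $\gamma$, so that it need not lie in $M_\gamma$ (which has index $1$ or $2$ in $M_e$). The goal is therefore to show that this possibly smaller subgroup remains transitive on the remaining edges.

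First I would reduce to a canonical situation. Since $M$ is $2$-transitive, and in particular transitive, on $\M$, I may assume $e = e_1$; and since the involution $x$ from Lemma~\ref{lem:flip} interchanges $\alpha_1$ and $\beta_1$ while centralizing $M$, I may further assume $\gamma = \alpha_1$. Then the task is to prove that $M_{\alpha_1}$ is transitive on $\M \setminus \{e_1\}$.

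For the key step, fix arbitrary edges $e_i, e_j \in \M \setminus \{e_1\}$. Transitivity of $M_{e_1}$ on $\M \setminus \{e_1\}$ yields $h \in M_{e_1}$ with $e_i^h = e_j$. Either $h$ already fixes $\alpha_1$, whence $h \in M_{\alpha_1}$, or $h$ swaps $\alpha_1$ and $\beta_1$. In the latter case I would replace $h$ by $hx$: since $\alpha_1^x = \beta_1$, we get $\alpha_1^{hx} = \beta_1^x = \alpha_1$, so $hx$ fixes $\alpha_1$; moreover $hx \in M_{e_1}$ because both $h$ and $x$ stabilize $e_1$ setwise. The crucial point is that $x$ fixes \emph{every} edge of $\M$ setwise, so $e_i^{hx} = (e_i^h)^x = e_j^x = e_j$, and the correction does not disturb the induced action on the matching. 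In either case we have produced an element of $M_{\alpha_1}$ carrying $e_i$ to $e_j$, which establishes the claimed transitivity.

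The main (and essentially only) subtlety is recognizing that $x$ does double duty: it moves $\alpha_1$ to precisely the endpoint needed to cancel the swap introduced by $h$, while simultaneously acting trivially on $\M$, so that right-multiplication by $x$ leaves the induced permutation of the edges unchanged. Once this observation is in place, no case analysis on $m$ or on the ambient graph is required, and the argument is purely group-theoretic.
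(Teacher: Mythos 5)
Your proposal is correct and is essentially the paper's own proof: both take an element $g \in M_e$ carrying $e_i$ to $e_j$ (which exists by $2$-transitivity) and, when $g$ swaps the endpoints of $e$, multiply by the central involution $x$ of Lemma \ref{lem:flip}, which fixes every edge of $\M$ setwise and hence corrects the vertex action without disturbing the edge action. The initial reduction to $e = e_1$, $\gamma = \alpha_1$ is harmless but unnecessary, since the argument works verbatim for an arbitrary vertex $\gamma$ and its matching edge.
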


\begin{proof}
 Let $e \in \M$ and $e = \{\gamma, \delta\}$.  Since $M$ is $2$-transitive on $\M$, $M_e$ is transitive on $\M \backslash \{e\}$.  Let $e_i, e_j \in \M \backslash \{e\}$.  Then there exists $g \in M_e$ such that $e_i^g = e_j$.  If $g \not\in M_\gamma$, then $gx \in M_\gamma$ and $e_i^{gx} = e_j$, where $x$ is as in Lemma \ref{lem:flip}.  The result follows.
\end{proof}

\begin{lem}
 \label{lem:flipall}
 Assume $\Gamma[\alpha_1, \beta_1, \alpha_2, \beta_2] \cong C_4$.  Define
 \[A_i := \{\alpha_i\} \cup \{\gamma\in V(\Gamma) : i\neq j, \space \gamma\nsim\alpha_{i}\} = \{ \gamma \in V(\Gamma): \gamma \sim \beta_i\}\] and $B_i := \{\gamma\in V(\Gamma)| \gamma\sim\alpha_{i}\}.$  If $g \in \Aut(\Gamma)$ and $e_i^g = e_i$, then $g$ preserves the partition of $V(\Gamma)$ into $A_i \cup B_i$.  Moreover, if $\alpha_i^g = \alpha_i$, then $A_i^g = A_i$ and $B_i^g = B_i$; if $\alpha_i^g = \beta_i$, then $A_i^g = B_i$ and $B_i^g = A_i$.
\end{lem}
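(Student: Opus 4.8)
The plan is to recognize that, despite the two-part definition, $A_i$ and $B_i$ are nothing more than the open neighborhoods $\Gamma(\beta_i)$ and $\Gamma(\alpha_i)$, respectively, and then to exploit the fact that every graph automorphism carries the neighborhood of a vertex to the neighborhood of its image. This reduces the whole statement to the ``functoriality'' of neighborhoods under $\Aut(\Gamma)$.

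First I would confirm that the two descriptions of $A_i$ coincide and that $\{A_i, B_i\}$ is genuinely a partition of $V(\Gamma)$; this is the only place the $C_4$ hypothesis is used. By Lemma \ref{lem:induced}, for every $j \neq i$ the induced subgraph $\Gamma[\alpha_i, \beta_i, \alpha_j, \beta_j]$ is a $4$-cycle in which $e_i$ and $e_j$, being disjoint, are opposite edges. Consequently each of $\alpha_j, \beta_j$ is adjacent to exactly one of $\alpha_i, \beta_i$, and the one adjacent to $\alpha_i$ is precisely the one not adjacent to $\beta_i$. Thus a vertex $\gamma \notin e_i$ satisfies $\gamma \not\sim \alpha_i$ exactly when $\gamma \sim \beta_i$, which, together with $\alpha_i \sim \beta_i$, shows that the two descriptions of $A_i$ agree and that $A_i = \Gamma(\beta_i)$; the same reasoning gives $B_i = \Gamma(\alpha_i)$. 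Since every vertex is adjacent to exactly one of $\alpha_i$ and $\beta_i$ --- the endpoints of $e_i$ because they are matched to each other, and every other vertex by the $C_4$ structure just described --- the sets $A_i$ and $B_i$ are disjoint and cover $V(\Gamma)$, and in particular $\alpha_i \in A_i$ and $\beta_i \in B_i$.

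With $A_i = \Gamma(\beta_i)$ and $B_i = \Gamma(\alpha_i)$ in hand, the remainder is immediate. For any $g \in \Aut(\Gamma)$ and any vertex $\delta$ one has $\Gamma(\delta)^g = \Gamma(\delta^g)$, and the hypothesis $e_i^g = e_i$ forces $g$ either to fix both of $\alpha_i, \beta_i$ or to interchange them. In the first case $B_i^g = \Gamma(\alpha_i)^g = \Gamma(\alpha_i^g) = \Gamma(\alpha_i) = B_i$ and likewise $A_i^g = A_i$; in the second case $\alpha_i^g = \beta_i$ and $\beta_i^g = \alpha_i$, so $B_i^g = \Gamma(\alpha_i)^g = \Gamma(\beta_i) = A_i$ and $A_i^g = \Gamma(\beta_i)^g = \Gamma(\alpha_i) = B_i$. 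Either way $g$ permutes the blocks $A_i, B_i$, which is exactly the stated conclusion. There is no real obstacle here: the entire content of the lemma is the identification of $A_i, B_i$ with neighborhoods, after which everything follows from the preservation of adjacency by $g$. The only care needed is the bookkeeping in the $C_4$ case, ensuring that ``$\gamma \not\sim \alpha_i$'' and ``$\gamma \sim \beta_i$'' coincide for $\gamma \notin e_i$ and that the endpoints of $e_i$ themselves land in the correct blocks --- both of which follow directly from Lemma \ref{lem:induced} and the fact that $\M$ is a matching.
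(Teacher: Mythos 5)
Your proof is correct and takes essentially the same route as the paper: both arguments rest on identifying $A_i$ and $B_i$ with the neighborhoods $\Gamma(\beta_i)$ and $\Gamma(\alpha_i)$ and then invoking preservation of adjacency by $g$. The only differences are cosmetic — you verify the partition and the set identity (which the paper asserts inside the lemma statement) and deduce both equalities directly from $\Gamma(\delta)^g = \Gamma(\delta^g)$, whereas the paper proves one containment and finishes with equicardinality.
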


\begin{proof}
Since \[A_i = \{\gamma \in V(\Gamma) : \gamma \nsim \alpha_i\} = \{\gamma \in V(\Gamma) : \gamma \sim \beta_i\}\] and
 \[B_i = \{\gamma \in V(\Gamma) : \gamma \nsim \beta_i\} = \{\gamma \in V(\Gamma) : \gamma \sim \alpha_i\},\]
we have that $A_i \cup B_i$ is a partition of $V(\Gamma)$.  Since automorphisms preserve adjacency and nonadjacency, the result follows. 
\end{proof}

\begin{lem}
 \label{lem:partition}
 Assume $\Gamma[\alpha_1, \beta_1, \alpha_2, \beta_2] \cong C_4$.  The vertices of $\Gamma$ can be partitioned into two sets, $A$ and $B$, such that $|A| = |B| = m$, each of $A$ and $B$ contains exactly one endpoint from each edge of $\M$, and either $\Gamma[A] \cong \Gamma[B] \cong K_m$ or $\Gamma[A] \cong \Gamma[B] \cong \overline{K}_m$.
\end{lem}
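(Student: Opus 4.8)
The plan is to encode the $C_4$-structure as a $\pm$-sign attached to each pair of matching edges and to reduce the statement to a balance (``switching'') problem on $K_m$. First I would record the dichotomy forced by $\Gamma[\alpha_i,\beta_i,\alpha_j,\beta_j]\cong C_4$: since the two matching edges are a pair of \emph{opposite} edges of the $4$-cycle, for each $i\ne j$ exactly one of
\[
(\alpha_i\sim\alpha_j\text{ and }\beta_i\sim\beta_j)\qquad\text{or}\qquad(\alpha_i\sim\beta_j\text{ and }\beta_i\sim\alpha_j)
\]
holds; set $s_{ij}=+1$ in the first case and $s_{ij}=-1$ in the second. This is symmetric, it shows that $\Gamma$ is $m$-regular, and it identifies $\Gamma$ (with $\M$ deleted) as the double cover of $K_m$ determined by the $\Z_2$-voltages $s_{ij}$, the edges of $\M$ being the fibre edges.

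Next I would translate the desired conclusion into this language. A partition $V(\Gamma)=A\cup B$ with one endpoint of each $e_i$ in each part is exactly a choice $\eta_i\in\{+1,-1\}$ of which endpoint lies in $A$, and a direct check shows that two chosen vertices are adjacent iff $s_{ij}\eta_i\eta_j=+1$. Hence $\Gamma[A]\cong K_m$ corresponds to $s_{ij}=\eta_i\eta_j$ for all $i,j$ (the signing is \emph{balanced}) and $\Gamma[A]\cong\overline{K}_m$ corresponds to $s_{ij}=-\eta_i\eta_j$ (\emph{antibalanced}); moreover the central flip $x\in Z(M)$ of Lemma~\ref{lem:flip} interchanges $A$ and $B$, so $\Gamma[A]\cong\Gamma[B]$ comes for free. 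Thus the lemma reduces to the single claim that $(K_m,s)$ is balanced or antibalanced, equivalently that the ``triangle type'' $\tau_{ijk}:=s_{ij}s_{jk}s_{ik}$ takes the same value on every triple of edges.

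I would then exploit that $\tau_{ijk}$ is intrinsic: relabelling the two endpoints of a single edge flips exactly the two factors of $\tau_{ijk}$ that involve that edge, so $\tau$ is independent of the chosen labelling and is therefore invariant under every element of $M$ (each of which permutes $\M$ by a graph automorphism). To push this toward constancy I would fix $e_1$, normalise the labelling so that $s_{1j}=+1$ for all $j$ (so that $\tau_{1jk}=s_{jk}$), and feed in the transitivity of the vertex stabiliser on the remaining edges from Lemma~\ref{lem:vertstab2trans} together with the $2$-transitivity of $M$ on $\M$; moving the distinguished pair of edges around under $M$ and tracking the invariance of $\tau$ is meant to collapse all the values $s_{jk}$ ($j,k\ge 2$) to one sign, which is precisely balance (or antibalance, according to that sign).

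The hard part will be exactly this balance step. Two-transitivity gives transitivity on \emph{pairs} of edges, but $\tau$ is a function on \emph{triples}, and a $2$-transitive group need not be transitive on triples; indeed $\tau$ is a two-graph invariant, so what must really be excluded is a nontrivial two-graph compatible with a matching-transitive action. I therefore expect the decisive leverage to come not from transitivity alone but from the additional rigidity of the $C_4$-hypothesis itself---each matched pair has \emph{no} common neighbour while the cross-adjacencies between edges are severely constrained---used to rule out the coexistence of ``parallel'' ($\tau=+1$) and ``crossed'' ($\tau=-1$) triples under the stabiliser of a pair of edges. This consistency argument is where I would concentrate the effort, since it is the one place the proof must genuinely use more than the bare $2$-transitivity of $M$ on $\M$.
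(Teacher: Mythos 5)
Your signed-graph reduction is carried out correctly and faithfully: the $C_4$ hypothesis says exactly that $\Gamma$ is the $\Z_2$-double cover of the signed complete graph $(K_m,s)$ together with the fibre matching, every element of $M$ projects to a switching automorphism of $(K_m,s)$, and every switching automorphism lifts, so the hypothesis ``$M$ is $2$-transitive on $\M$'' translates with \emph{no loss} into ``the two-graph of $s$ has a $2$-transitive automorphism group.'' But the proposal then stops: you never prove that the triple invariant $\tau$ is constant, and you explicitly defer the ``balance step'' to future effort. As a proof of Lemma~\ref{lem:partition} this is a genuine gap, not a detail.

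Worse, the gap cannot be filled, and your own framing shows why: since the reduction is lossless, the remaining claim is precisely ``every two-graph admitting a $2$-transitive automorphism group is trivial,'' and that is false --- nontrivial doubly transitive two-graphs exist (they were classified by Taylor), the smallest on six points. Concretely, let $\Gamma$ be the icosahedron with its six antipodal diagonals added, and let $\M$ be those diagonals. Antipodal vertices of the icosahedron have no common neighbour, and every vertex is adjacent to exactly one vertex of each other antipodal pair, so any two edges of $\M$ induce a $C_4$; moreover $\Z_2\times A_5\le\Aut(\Gamma)$ stabilizes $\M$ setwise and acts on its six edges as $PSL(2,5)$ on the projective line, hence $2$-transitively. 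Yet the clique number and independence number of $\Gamma$ both equal $3$, so no transversal of $\M$ is a clique or a coclique: the conclusion of Lemma~\ref{lem:partition} fails. (This also means Lemma~\ref{lem:11case} and the $C_4$ branch of Theorem~\ref{thm:2transperfect} are incomplete as stated.) So the ``extra rigidity of the $C_4$-hypothesis'' you hoped to exploit does not exist; it is entirely consumed in defining the signs $s_{ij}$.

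It is worth seeing that the paper's own argument breaks at exactly the spot you flagged. After choosing $h\in M_{\alpha_2}$ with $e_1^h=e_3$, the paper asserts that either $A^h=A$ and $B^h=B$, or $h$ swaps $A_1\cap A_2$ with $B_1\cap A_2$ and $A_1\cap B_2$ with $B_1\cap B_2$. Since $A=A_1=\Gamma(\beta_1)$ and $A^h=\Gamma(\beta_1^h)$ is $A_3$ (or $B_3$), this dichotomy amounts to asserting that the partition $\{A_3,B_3\}$ coincides with $\{A_1,B_1\}$ --- which is exactly the balance/antibalance statement under proof, assumed rather than established. In the icosahedron example one computes $|A_1\cap A_3|=4$ (the two common icosahedral neighbours of $\beta_1,\beta_3$ together with $\alpha_1$ and $\alpha_3$), so $A_3$ is neither $A_1$ nor $B_1$ and both alternatives fail. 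In short: your instinct that this is the one place needing more than bare $2$-transitivity is precisely correct, your approach makes the obstruction visible where the paper's hides it, but neither argument proves the lemma, because the lemma as stated is false.
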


\begin{proof}
 By Lemmas \ref{lem:vertstab2trans} and \ref{lem:flipall}, for any vertex $\gamma \in V(\Gamma)$, $M_\gamma$ has four orbits on vertices: $\{\gamma\}$, $\{\gamma^c\}$, $\Gamma(\gamma) \backslash \{\gamma^c\}$, and $\Gamma(\gamma^c) \backslash \{\gamma\}$.  Without loss of generality, we may let $\{\gamma, \gamma^c\} = e_1$, $\Gamma(\gamma^c) \backslash \{\gamma\} = \{\alpha_i : i \ge 2\}$, and $\Gamma(\gamma) \backslash \{\gamma^c\} = \{\beta_i : i \ge 2\}$.  Moreover, by Lemma \ref{lem:vertstab2trans}, there exists $h \in M_{\alpha_2}$ such that $\alpha_2^h = \alpha_2$ and $e_1^h = e_3$.  
 
 Suppose first that $\gamma^h = \alpha_3$.  Let $\gamma = \alpha_1$, and let $A := \{\alpha_i : 1 \le i \le m \}$ and $B := \{\beta_i : 1 \le i \le m\}$.  By Lemma \ref{lem:vertstab2trans}, for each $i \ge 2$ there exists $g_i \in M_{\alpha_1}$ such that $\alpha_1^{g_i} = \alpha_1$ and $\alpha_2^{g_i} = \alpha_i$.  Note that \[A_2 = (A_2 \cap A_1) \cup (A_2 \cap B_1),\]  
 \[B_2 = (B_2 \cap A_1) \cup (B_2 \cap B_1),\]
 where $A_i$ and $B_i$ are defined as in the statement of Lemma \ref{lem:flipall}.  Since $\alpha_2^h = \alpha_2$, $A_2^h = A_2$ and $B_2^h = B_2$, and so either (i) $A^h = A$ and $B^h = B$ or (ii) $h$ swaps $(A_1 \cap A_2)$ and $(B_1 \cap A_2)$ and $h$ swaps $(A_1 \cap B_2)$ and $(B_1 \cap B_2)$.  However, $\alpha_2 \in A_1 \cap A_2$, so we have $A^h = A$ and $B^h = B$.  Let $\alpha_i, \alpha_j \in A$, $i \neq j$.  Since $A$ is invariant under $M_{\alpha_1}$ and $h$, there is $\alpha_k \in A$ such that $\alpha_k^{hg_3^{-1}g_i} = \alpha_j$.  Since $\alpha_1 = \gamma \not\sim \alpha_k$, we have $\alpha_j = \alpha_k^{hg_3^{-1}g_i} \not\sim \alpha_1^{hg_3^{-1}g_i} = \alpha_i$.  Since $i,j$ were arbitrary, $A$ is a coclique.  Since $\Gamma[\alpha_1, \beta_1, \alpha_2, \beta_2] \cong C_4$, it immediately follows that $B$ is a coclique as well.
 
 Suppose now that $\gamma^h = \beta_3$.  Let $\gamma = \beta_1$, and let $A := \{\alpha_i : 1 \le i \le m \}$ and $B := \{\beta_i : 1 \le i \le m\}$.  The proof now proceeds as above.  By Lemma \ref{lem:vertstab2trans}, for each $i \ge 2$ there exists $g_i \in M_{\beta_1}$ such that $\beta_1^{g_i} = \beta_1$ and $\beta_2^{g_i} = \beta_i$.  Note that \[B_2 = (B_2 \cap A_1) \cup (B_2 \cap B_1),\]
 \[A_2 = (A_2 \cap A_1) \cup (A_2 \cap B_1),\]
 where $A_i$ and $B_i$ are defined as in the statement of Lemma \ref{lem:flipall}.  Since $\beta_2^h = \beta_2$, $B_2^h = B_2$ and $A_2^h = A_2$, and so either (i) $B^h = B$ and $A^h = A$ or (ii) $h$ swaps $(B_1 \cap A_2)$ and $(A_1 \cap A_2)$ and $h$ swaps $(B_1 \cap B_2)$ and $(A_1 \cap B_2)$.  However, $\beta_2 \in B_1 \cap B_2$, so we have $B^h = B$ and $A^h = A$.  Let $\beta_i, \beta_j \in B$, $i \neq j$.  Since $B$ is invariant under $M_{\beta_1}$ and $h$, there is $\beta_k \in B$ such that $\beta_k^{hg_3^{-1}g_i} = \beta_j$.  Since $\beta_1 = \gamma \sim \beta_k$, we have $\beta_j = \beta_k^{hg_3^{-1}g_i} \sim \beta_1^{hg_3^{-1}g_i} = \beta_i$.  Since $i,j$ were arbitrary, $B$ is a clique.  Since $\Gamma[\alpha_1, \beta_1, \alpha_2, \beta_2] \cong C_4$, it immediately follows that $A$ is a clique as well.
\end{proof}

\begin{lem}\label{lem:11case}
If $\Gamma[\alpha_{1},\beta_{1},\alpha_{2},\beta_{2}]\cong C_{4}$, then either $\Gamma=K_{m}\veebar K_{m}$ or $\Gamma=K_{m,m}$.
\end{lem}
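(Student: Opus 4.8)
The plan is to build directly on Lemma~\ref{lem:partition}, which already does the bulk of the work. Applying it gives a partition $V(\Gamma) = A \cup B$ with $|A| = |B| = m$, where each of $A$ and $B$ meets every edge of $\M$ in exactly one endpoint, and where either both $\Gamma[A]$ and $\Gamma[B]$ are complete or both are edgeless. After relabeling I may assume $\alpha_i \in A$ and $\beta_i \in B$ for every $i$, so that $\M$ is a perfect matching between $A$ and $B$ and the two displayed cases of Lemma~\ref{lem:partition} become the dichotomy $\Gamma[A] \cong \Gamma[B] \cong K_m$ versus $\Gamma[A] \cong \Gamma[B] \cong \overline{K}_m$. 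The only information still to be pinned down is which edges run between $A$ and $B$ beyond the matching edges themselves.

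To determine those cross edges, I would examine a single pair $i \neq j$ using Lemma~\ref{lem:induced}, which guarantees $\Gamma[\alpha_i, \beta_i, \alpha_j, \beta_j] \cong C_4$. In a $4$-cycle every vertex has degree $2$; since the matching edges $e_i = \{\alpha_i, \beta_i\}$ and $e_j = \{\alpha_j, \beta_j\}$ already account for one incidence at each of the four vertices, the remaining two edges of the cycle are forced to be either $\{\alpha_i, \alpha_j\}$ together with $\{\beta_i, \beta_j\}$, or else $\{\alpha_i, \beta_j\}$ together with $\{\beta_i, \alpha_j\}$. These are the only two $4$-cycles on these four vertices containing $e_i$ and $e_j$; the first is characterized by an edge inside $A$ (and inside $B$), while the second is characterized by the two ``diagonal'' cross edges, so the two possibilities are mutually exclusive for each fixed pair.

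I then split according to the dichotomy. If $\Gamma[A] \cong \Gamma[B] \cong K_m$, then $\alpha_i \sim \alpha_j$ for all $i \neq j$, which forces the first configuration for every pair; hence $\alpha_i \not\sim \beta_j$ and $\beta_i \not\sim \alpha_j$ whenever $i \neq j$, so the only edges between $A$ and $B$ are the matching edges and $\Gamma = K_m \veebar K_m$. If instead $\Gamma[A] \cong \Gamma[B] \cong \overline{K}_m$, then $\alpha_i \not\sim \alpha_j$ for all $i \neq j$, which forces the second configuration for every pair; hence $\alpha_i \sim \beta_j$ and $\beta_i \sim \alpha_j$ whenever $i \neq j$, and together with the matching edges $\alpha_i \sim \beta_i$ this makes $A$ completely joined to $B$, so $\Gamma = K_{m,m}$. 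I do not expect a genuine obstacle here beyond bookkeeping: Lemma~\ref{lem:partition} supplies the hard structural input, and the remaining step is the elementary observation that the clique/coclique status of $A$ selects, pair by pair, exactly one of the two admissible $4$-cycles, with no possibility of conflict between distinct pairs.
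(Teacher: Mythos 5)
Your proposal is correct and follows essentially the same route as the paper: the paper likewise derives the result from Lemma~\ref{lem:partition} together with a degree count in the induced $C_4$ (each vertex there has degree $2$, so once the clique/coclique status of $A$ is fixed, the cross edges between $A$ and $B$ are forced pair by pair exactly as you describe). Your write-up merely makes explicit the bookkeeping that the paper compresses into one sentence.
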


\begin{proof}
This follows immediately from Lemma \ref{lem:partition} and a consideration of the degree of each vertex in the induced subgraph $\Gamma[\alpha_{1},\beta_{1},\alpha_{2},\beta_{2}]\cong C_{4}$.
\end{proof}

\subsection{The cases $\Gamma[\alpha_1, \beta_1, \alpha_2, \beta_2] \cong P_4$ and $\Gamma[\alpha_1, \beta_1, \alpha_2, \beta_2] \cong K_4 \backslash \{e\}$}

The two remaining cases are actually very closely related.  We begin with a helpful lemma.  

\begin{lem}
 \label{lem:regvsnonreg}
 Assume $\Gamma[\alpha_1, \beta_1, \alpha_2, \beta_2] \cong P_4$ or $\Gamma[\alpha_1, \beta_1, \alpha_2, \beta_2] \cong K_4 \backslash \{e\}$.  Either
\begin{itemize}
\item[(1)] $\Gamma$ is regular, or
\item[(2)] $\Gamma$ has two orbits of vertices: one orbit is a clique, the other a coclique.
\end{itemize}
\end{lem}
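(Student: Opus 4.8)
The plan is to split into two cases according to whether the stabilizer in $M$ of an edge of $\M$ can interchange the two endpoints of that edge. By edge-transitivity (which follows from the $2$-transitivity of $M$ on $\M$), it suffices to examine a single edge, say $e_1 = \{\alpha_1,\beta_1\}$: if $M_{e_1}$ contains an element swapping $\alpha_1$ and $\beta_1$, then conjugating by elements carrying $e_1$ to $e_i$ shows the same holds for every edge, and otherwise no edge stabilizer swaps its endpoints.

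In the first case I would argue that $\Gamma$ is vertex-transitive, giving conclusion (1). Indeed, given any two vertices, one first uses transitivity of $M$ on $\M$ to carry the edge of one onto the edge of the other, landing on one of its two endpoints, and then, if necessary, applies the endpoint-swapping element to reach the desired endpoint. A vertex-transitive graph is regular, so $\Gamma$ is regular.

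In the second case — no edge stabilizer swaps its endpoints — I would produce the clique/coclique partition. The key step is to show that each edge $e_i$ has a well-defined \emph{dominant} endpoint, namely the endpoint of larger degree inside the induced subgraph $\Gamma[e_i,e_j]$; here I use that in both $P_4$ and $K_4 \backslash \{e\}$ the two endpoints of a matching edge have distinct degrees within the four-vertex subgraph (degrees $2,1$ and $3,2$, respectively), so exactly one endpoint of $e_i$ is dominant relative to $e_j$. To see that this choice does not depend on the comparison edge $e_j$, I would invoke full $2$-transitivity: choose $g \in M$ with $(e_i,e_j)^g = (e_i,e_k)$; since we are in the no-swap case, $g$ fixes $\alpha_i$ and $\beta_i$ individually, and as $g$ restricts to an isomorphism $\Gamma[e_i,e_j] \to \Gamma[e_i,e_k]$ it preserves the local degree of each endpoint of $e_i$, so the dominant endpoint of $e_i$ is the same relative to every other edge. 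Letting $A$ be the set of dominant endpoints and $B$ the set of the remaining (\emph{recessive}) endpoints, each of $A$ and $B$ meets every edge of $\M$ in exactly one vertex; moreover the dominant endpoint has strictly larger degree in $\Gamma$ than the recessive one (it contributes strictly more at each of the $m-1 \ge 1$ comparisons), so $A$ and $B$ are preserved by $\Aut(\Gamma)$, edge-transitivity makes $\Aut(\Gamma)$ transitive on each, and the degree difference keeps them distinct orbits.

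Finally, to identify the structure I would check the two four-vertex graphs directly: in each of $P_4$ and $K_4 \backslash \{e\}$ the two dominant vertices are adjacent while the two recessive vertices are nonadjacent. Applying this to every pair $e_i,e_j$ shows that $A$ induces a clique and $B$ induces a coclique, which is conclusion (2). I expect the main obstacle to be the global consistency argument, i.e.\ ruling out the possibility that an endpoint is dominant relative to one edge but recessive relative to another; this is exactly where $2$-transitivity (rather than mere edge-transitivity) together with the no-swap hypothesis is essential, and it is what forces the labeling into the two orbits $A$ and $B$.
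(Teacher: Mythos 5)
Your proof is correct, but it follows a genuinely different route from the paper's. The paper argues at the level of orbits: since $M$ is transitive on $\M$ and every vertex lies on a matching edge, $\Aut(\Gamma)$ has at most two vertex orbits; if it is vertex-transitive, then $\Gamma$ is regular, and otherwise each of the two orbits meets every edge of $\M$ exactly once, so the $2$-transitive action of $M$ on $\M$ transfers to a $2$-transitive action on each orbit, forcing each orbit to induce a complete or an empty graph; finally, ``both cliques'' is dismissed because it would force regularity and ``both cocliques'' because it would force disconnectedness. Your argument instead splits on whether an edge stabilizer in $M$ can interchange the endpoints of its matching edge: the swap case gives vertex-transitivity (strictly stronger than regularity) at once, while the no-swap case constructs the partition explicitly via your dominant/recessive labelling, whose consistency comes from $2$-transitivity together with the no-swap hypothesis, and whose clique/coclique structure is read directly off $P_4$ and $K_4 \backslash \{e\}$. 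The paper's approach buys brevity and uniformity---it never needs the internal structure of the two four-vertex graphs beyond excluding the degenerate cases---while yours buys explicitness: you obtain the exact degrees ($m$ versus $1$ for $P_4$, and $2m-1$ versus $m$ for $K_4 \backslash \{e\}$), and you never have to confront the degenerate ``both cliques''/``both cocliques'' configurations at all, which is a genuine advantage since the paper's one-line dismissals of them (regularity, disconnectedness) are terse and implicitly lean on the hypothesis that the induced subgraph is not $C_4$ or $K_4$. The only points worth adding to your write-up are the small remarks that well-definedness of dominance is vacuous when $m = 2$, and that every pair $e_i, e_j$ induces the same four-vertex subgraph (this is Lemma \ref{lem:induced}), which you use implicitly throughout.
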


\begin{proof}
 We know that $G$ is transitive on $\M$, so $\Aut(\Gamma)$ has at most $2$ orbits of vertices. If $\Aut(\Gamma)$ is also transitive on $V(\Gamma)$, then $(1)$ holds. If not, $\Gamma$ has exactly $2$ orbits of vertices, and $\Aut(\Gamma)$ will be $2$-transitive on each of these orbits. Thus each orbit is either a clique or a coclique.  Both cannot be cliques, because otherwise $\Gamma$ would be regular. Both cannot be cocliques, because otherwise $\Gamma$ is not connected. So we are in case $(2)$.
\end{proof}

This allows us immediately to classify these graphs in the event that they are not regular.

\begin{lem}
 \label{lem:1021nonreg}
 If $\Gamma[\alpha_1, \beta_1, \alpha_2, \beta_2] \cong P_4$ and $\Gamma$ is not regular, then $\Gamma \cong K_m \veebar  \overline{K}_m$.  If we have $\Gamma[\alpha_1, \beta_1, \alpha_2, \beta_2] \cong K_4 \backslash \{e\}$ and $\Gamma$ is not regular, then $\Gamma \cong K_m \vee \overline{K}_m$.
\end{lem}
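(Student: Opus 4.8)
The plan is to read off the entire edge set of $\Gamma$ by combining the orbit dichotomy of Lemma~\ref{lem:regvsnonreg} with the forced shape of $\Gamma[\alpha_1,\beta_1,\alpha_2,\beta_2]$. Since $\Gamma$ is not regular, Lemma~\ref{lem:regvsnonreg} gives that $\Aut(\Gamma)$ has exactly two vertex orbits, one a clique $C$ and the other a coclique $I$. The first task is to place the matching edges relative to these orbits: no $e_i$ can lie inside the coclique $I$, and if some $e_i$ were contained in $C$ then, because $M=\Aut(\Gamma)_\M$ is transitive on $\M$ and every element of $M$ preserves the $\Aut(\Gamma)$-orbits $C$ and $I$, every edge of $\M$ would lie in $C$; as $\M$ is perfect this would force $C=V(\Gamma)$ and $I=\varnothing$, which is absurd. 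Hence each $e_i$ meets both orbits, so $|C|=|I|=m$, and I would relabel within each edge so that $\alpha_i\in C$ and $\beta_i\in I$; then $\Gamma[C]\cong K_m$ and $\Gamma[I]\cong\overline{K}_m$, and the only adjacencies left to determine are the cross adjacencies $\alpha_i\sim\beta_j$ with $i\neq j$.

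Next I would pin down these cross adjacencies from the induced four-vertex subgraph. Since $\alpha_1,\alpha_2\in C$ we have $\alpha_1\sim\alpha_2$, and since $\beta_1,\beta_2\in I$ we have $\beta_1\not\sim\beta_2$; together with the matching edges $\alpha_1\beta_1$ and $\alpha_2\beta_2$, this gives three present edges forming the path $\beta_1-\alpha_1-\alpha_2-\beta_2$ and one known non-edge $\beta_1\beta_2$ inside $\Gamma[\alpha_1,\beta_1,\alpha_2,\beta_2]$. If $\Gamma[\alpha_1,\beta_1,\alpha_2,\beta_2]\cong P_4$, then the induced subgraph has only these three edges, so both $\alpha_1\beta_2$ and $\alpha_2\beta_1$ are non-edges. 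If instead $\Gamma[\alpha_1,\beta_1,\alpha_2,\beta_2]\cong K_4\backslash\{e\}$, then the induced subgraph has five edges, forcing both $\alpha_1\beta_2$ and $\alpha_2\beta_1$ to be edges, with $\beta_1\beta_2$ as the unique missing edge.

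Finally I would spread this local information across all pairs by $2$-transitivity. For any ordered pair $(e_i,e_j)$ there is $g\in M$ with $(e_1,e_2)^g=(e_i,e_j)$; since $g$ preserves $C$ and $I$, it sends $\alpha_1\mapsto\alpha_i$, $\beta_1\mapsto\beta_i$, $\alpha_2\mapsto\alpha_j$, $\beta_2\mapsto\beta_j$, so $\alpha_i\sim\beta_j$ precisely when $\alpha_1\sim\beta_2$ (this is just Lemma~\ref{lem:induced} read through the orbit labelling). In the $P_4$ case the only edges joining $C$ to $I$ are therefore the matching edges, giving $\Gamma\cong K_m\veebar\overline{K}_m$; in the $K_4\backslash\{e\}$ case every vertex of $C$ is joined to every vertex of $I$, giving $\Gamma\cong K_m\vee\overline{K}_m$. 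The one step that requires genuine care, rather than reading off the figure, is the first: verifying that each matching edge meets both orbits and that the clique/coclique labelling is consistent with the $P_4$ (respectively $K_4\backslash\{e\}$) structure---for instance, in the $P_4$ case the two degree-one vertices are non-adjacent and so cannot both lie in the clique, which is exactly what forces them into $I$. Everything after that is bookkeeping.
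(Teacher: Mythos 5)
Your proposal is correct and follows the same route as the paper: the paper's own proof simply states that the result "follows immediately from Lemma \ref{lem:regvsnonreg}," and your argument is exactly the detailed expansion of that claim (matching edges must cross between the clique orbit and the coclique orbit, the four-vertex induced subgraph then fixes the cross-adjacencies, and $2$-transitivity of $M$ on $\M$ propagates this to all pairs). The details you supply — in particular that $M$-transitivity on $\M$ plus $M$-invariance of the $\Aut(\Gamma)$-orbits rules out a matching edge inside the clique — are sound and are precisely what the paper leaves implicit.
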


\begin{proof}
 This follows immediately from Lemma \ref{lem:regvsnonreg}.
\end{proof}

The remaining cases are when $\Gamma$ is regular.  This implies that $m$ is odd.  

\begin{lem}
 \label{lem:modd}
 Assume $\Gamma[\alpha_1, \beta_1, \alpha_2, \beta_2] \cong P_4$ or $\Gamma[\alpha_1, \beta_1, \alpha_2, \beta_2] \cong K_4 \backslash \{e\}$.  If $\Gamma$ is regular, then $m$ is odd.  Moreover, if $m = 2k+1$, then the degree of each vertex is $k+1$ if $\Gamma[\alpha_1, \beta_1, \alpha_2, \beta_2] \cong P_4$ and the degree of each vertex is $3k+1$ if $\Gamma[\alpha_1, \beta_1, \alpha_2, \beta_2] \cong K_4 \backslash \{e\}$.
\end{lem}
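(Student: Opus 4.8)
The plan is to fix the edge $e_1 = \{\alpha_1, \beta_1\}$ and, for each other matched pair, count how many of $\alpha_1$ and $\beta_1$ are adjacent into that pair; regularity will then force a symmetry between $\alpha_1$ and $\beta_1$ that pins down the parity of $m$ and the common degree. By Lemma~\ref{lem:induced}, for every $j \ge 2$ the induced subgraph $\Gamma[\alpha_1, \beta_1, \alpha_j, \beta_j]$ is of the fixed type; since $e_1$ and $e_j$ are matching edges, both survive into this induced subgraph, so all of its remaining edges are \emph{cross edges}, i.e.\ edges joining an endpoint of $e_1$ to an endpoint of $e_j$.

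In the $P_4$ case I would first observe that a path on four vertices has exactly three edges and a unique perfect matching consisting of its two end-edges; hence $e_1$ and $e_j$ are the end-edges and there is precisely one cross edge, which is incident to exactly one of $\alpha_1, \beta_1$. Writing $A$ for the number of indices $j \ge 2$ whose cross edge meets $\alpha_1$ and $B$ for the number meeting $\beta_1$, I get $A + B = m-1$, together with $\deg(\alpha_1) = A + 1$ and $\deg(\beta_1) = B + 1$ (the extra $1$ accounting for the edge $e_1$, and noting $\alpha_1$ has no neighbour in the $B$ pairs whose cross edge meets $\beta_1$). Regularity forces $A = B$, so $m - 1 = 2A$ is even and $m$ is odd; writing $m = 2k+1$ gives $A = k$ and common degree $k+1$.

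For the $K_4 \backslash \{e\}$ case I would run the identical bookkeeping with one twist: here five of the six edges on $\{\alpha_1, \beta_1, \alpha_j, \beta_j\}$ are present, and because both matching edges survive the missing edge must be a cross edge, again incident to exactly one of $\alpha_1, \beta_1$. If it meets $\alpha_1$ then $\alpha_1$ has one neighbour and $\beta_1$ has two neighbours in that pair, and vice versa. Letting $A, B$ now count the pairs whose missing edge meets $\alpha_1$, respectively $\beta_1$, I would obtain $A + B = m-1$, $\deg(\alpha_1) = 1 + A + 2B$, and $\deg(\beta_1) = 1 + 2A + B$; regularity again yields $A = B$, hence $m = 2k+1$ is odd and the common degree is $1 + k + 2k = 3k+1$.

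The argument is essentially a single counting identity in each case, so I do not expect a serious obstacle; the one point requiring care is the combinatorial claim that the \emph{distinguished} cross edge (the unique present one for $P_4$, the unique absent one for $K_4 \backslash \{e\}$) is incident to exactly one endpoint of $e_1$, since this is precisely what converts the equality of degrees at $\alpha_1$ and $\beta_1$ into the clean equation $A = B$ and thereby into the parity of $m$.
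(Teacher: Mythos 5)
Your proposal is correct and follows essentially the same approach as the paper: both proofs use Lemma \ref{lem:induced} to fix the cross-edge pattern between $e_1$ and each other matched pair, then count neighbours of $\alpha_1$ and $\beta_1$ and invoke regularity to force the parity of $m$ and the stated degree. The only cosmetic difference is bookkeeping--the paper sums $|\Gamma(\alpha_1)| + |\Gamma(\beta_1)|$ and sets it equal to $2|\Gamma(\alpha_1)|$, while you equate the two degrees via the counts $A$ and $B$--but the underlying counting identity is identical.
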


\begin{proof}
 Assume that $\Gamma[\alpha_1, \beta_1, \alpha_2, \beta_2] \cong P_4$.  For each $i \ge 2$, the vertices of $e_i$ contribute $0$ to the degree of one endpoint of $e_1$ and $1$ to the other, i.e., each $e_i$ for $i \ge 2$ contributes $1$ to the sum of the degree of $\alpha_1$ and the degree of $\beta_1$.  Since $\Gamma$ is regular,
 \[2 \cdot |\Gamma(\alpha_1)| = |\Gamma(\alpha_1)| + |\Gamma(\beta_1)| = 1 + 1 + (m-1).\]
 The result follows for $\Gamma[\alpha_1, \beta_1, \alpha_2, \beta_2] \cong P_4$. The proof is analogous in the case when we have $\Gamma[\alpha_1, \beta_1, \alpha_2, \beta_2] \cong K_4 \backslash \{e\}$.
\end{proof}

In fact, in these remaining cases when $\Gamma$ is regular, $\Gamma$ must be vertex transitive.

\begin{lem}
\label{lem:1021vertextrans}
 Assume $\Gamma[\alpha_1, \beta_1, \alpha_2, \beta_2] \cong P_4$ or $\Gamma[\alpha_1, \beta_1, \alpha_2, \beta_2] \cong K_4 \backslash \{e\}$.  If $\Gamma$ is regular, then $M$ is transitive on $V(\Gamma)$.
\end{lem}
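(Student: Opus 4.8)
The plan is to argue by contradiction, exploiting the fact that $2$-transitivity on $\M$ leaves almost no room for the two endpoints of a matching edge to behave differently once $\Gamma$ is regular. First I would observe that, since $M$ is $2$-transitive and hence transitive on the $m$ edges of $\M$, and these edges form a system of imprimitivity with blocks of size $2$, the group $M$ has at most two orbits on $V(\Gamma)$: a single orbit precisely when the edge-stabilizer $M_{e_1}$ contains an element swapping $\alpha_1$ and $\beta_1$, and two orbits otherwise. Thus it suffices to rule out the two-orbit case. So suppose $M$ has exactly two orbits $O_1,O_2$ on $V(\Gamma)$. Because $M$ is edge-transitive and fixes each of $O_1,O_2$ setwise, the quantity $|V(e)\cap O_1|$ is the same for every edge $e\in\M$; it cannot equal $0$ or $2$ (either would force one orbit to be empty), so every edge meets each orbit in exactly one vertex. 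After relabelling the endpoints of each edge if necessary, we may assume $\alpha_i\in O_1$ and $\beta_i\in O_2$ for all $i$, and this labelling is then preserved by $M$.

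Next I would isolate the adjacency-defined invariant that distinguishes the two endpoints of an edge, uniformly across the two cases. For any ordered pair $(e_i,e_j)$ of distinct edges, the induced subgraph $\Gamma[\alpha_i,\beta_i,\alpha_j,\beta_j]$ has, among the four possible cross edges between $e_i$ and $e_j$, exactly one \emph{distinguished} cross edge: the unique cross edge present when the subgraph is $P_4$, and the unique cross edge absent when it is $K_4\backslash\{e\}$. This distinguished cross edge singles out one endpoint $s(e_i,e_j)\in\{\alpha_i,\beta_i\}$ of $e_i$, namely the endpoint of $e_i$ on which it is (or would be) incident. Since $s$ is defined purely through adjacency in the induced subgraph, it is $M$-equivariant: for every $g\in M$ we have $s(e_i^g,e_j^g)=s(e_i,e_j)^g$, because $g$ carries $\Gamma[\alpha_i,\beta_i,\alpha_j,\beta_j]$ isomorphically onto $\Gamma[\alpha_i^g,\beta_i^g,\alpha_j^g,\beta_j^g]$ and hence sends the distinguished cross edge to the distinguished cross edge.

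Now I would combine the two observations. Fix $(e_1,e_2)$ and say $s(e_1,e_2)\in O_t$ for some $t\in\{1,2\}$. Given any other ordered pair $(e_i,e_j)$, the $2$-transitivity of $M$ on $\M$ yields $g\in M$ with $(e_1,e_2)^g=(e_i,e_j)$, whence $s(e_i,e_j)=s(e_1,e_2)^g\in O_t^g=O_t$. So the distinguished endpoint lies in the same orbit $O_t$ for \emph{every} ordered pair; in particular, for fixed $e_1$ and all $j\ge 2$, $s(e_1,e_j)$ is the unique endpoint of $e_1$ lying in $O_t$, which is a single vertex. Writing $a:=\#\{j\ge 2 : s(e_1,e_j)=\alpha_1\}$ and $b:=\#\{j\ge 2: s(e_1,e_j)=\beta_1\}$, we have $a+b=m-1$ and, by the constancy just proved, $\{a,b\}=\{0,m-1\}$. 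On the other hand, the degree count carried out in Lemma \ref{lem:modd} expresses $\deg(\alpha_1)$ and $\deg(\beta_1)$ in terms of $a$ and $b$ (namely $\deg(\alpha_1)=1+a$, $\deg(\beta_1)=1+b$ in the $P_4$ case, and $\deg(\alpha_1)=1+a+2b$, $\deg(\beta_1)=1+2a+b$ in the $K_4\backslash\{e\}$ case), so regularity forces $a=b$. Since $m\ge 3$, the relations $\{a,b\}=\{0,m-1\}$ and $a=b$ are incompatible, a contradiction. Hence $M$ has a single orbit on $V(\Gamma)$, i.e.\ $M$ is transitive on $V(\Gamma)$.

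The main obstacle is conceptual rather than computational: the crux is to extract the single adjacency-defined invariant $s(e_i,e_j)$ that treats the $P_4$ and $K_4\backslash\{e\}$ cases uniformly and to verify its $M$-equivariance, after which $2$-transitivity rigidifies $s$ to be orbit-constant and regularity supplies the numerical contradiction.
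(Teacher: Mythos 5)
Your proof is correct, and it assembles the same raw ingredients as the paper's proof --- the uniqueness of the distinguished cross edge between two matching edges (present in the $P_4$ case, absent in the $K_4 \backslash \{e\}$ case), the $2$-transitivity of $M$ on $\M$, and regularity --- but runs them in the opposite logical direction. The paper argues constructively: since $M$ is transitive on $\M$, it suffices to exhibit an element swapping the endpoints of a fixed $e_i$; regularity guarantees the existence of edges $e_j$ and $e_k$ whose distinguished cross edges attach at $\alpha_i$ and at $\beta_i$ respectively, and $2$-transitivity supplies $g \in M$ with $e_i^g = e_i$ and $e_j^g = e_k$, which must carry one distinguished cross edge to the other and hence swap $\alpha_i$ with $\beta_i$. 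You instead assume there are two vertex orbits, promote the attachment endpoint to an $M$-equivariant function $s$, use $2$-transitivity to force $s$ to be orbit-constant, and let regularity produce the numerical clash between $a=b$ and $\{a,b\}=\{0,m-1\}$; in effect, your argument is the contrapositive of the paper's. What your version buys: it makes fully explicit the step the paper dispatches with a figure (``in each case we may choose three edges as follows''), namely that regularity is precisely what forces distinguished cross edges to attach at both endpoints of $e_1$, and the invariant $s$ treats the two cases uniformly. (One small inaccuracy: Lemma \ref{lem:modd} only computes the sum of the two endpoint degrees, not the individual formulas $\deg(\alpha_1)=1+a$, etc., but those formulas are immediate from Lemma \ref{lem:induced}, so nothing is lost.) What the paper's version buys: it is shorter and exhibits the swapping element explicitly, which is the form in which the lemma is actually used later (Lemmas \ref{lem:edgetrans} and \ref{lem:01bipartite} cite it for an element interchanging $\alpha_1$ and $\beta_1$), whereas your argument certifies the existence of such an element only indirectly, via the orbit dichotomy.
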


\begin{proof}
 We know that $M$ is transitive on the edges of $\M$, so it suffices to show that there is $g_i \in \Aut(\Gamma)$ such that $\alpha_i^{g_i} = \beta_i$ for each $i$.  Assuming $\Gamma$ contains more than a single edge, it must contain at least three edges since $m$ is odd.  In each case we may choose three edges as follows:
 \begin{multicols}{2}
\begin{center}
\begin{tikzpicture}[scale=0.5]
\node[fill, shape=circle, label=above:$\alpha_{j}$] (aj) at (0,5) {};
\node[fill, shape=circle, label=below:$\beta_{j}$] (bj) at (0,0) {};
\node[fill, shape=circle, label=above:$\alpha_{i}$] (ai) at (3,5) {};
\node[fill, shape=circle, label=below:$\beta_{i}$] (bi) at (3,0) {};
\node[fill, shape=circle, label=above:$\alpha_{k}$] (ak) at (6,5) {};
\node[fill, shape=circle, label=below:$\beta_{k}$] (bk) at (6,0) {};

\draw[line width=2pt] (aj)--(bj) node[pos=.5, left] {$e_{j}$};
\draw[line width=2pt] (ai)--(bi) node[pos=.5, right] {$e_{i}$};
\draw[line width=2pt] (ak)--(bk) node[pos=.5, right] {$e_{k}$};
\draw[line width=2pt] (ai)--(bj);
\draw[line width=2pt] (ak)--(bi);
\end{tikzpicture}
\end{center}
\begin{center}
\begin{tikzpicture}[scale=0.5]
\node[fill, shape=circle, label=above:$\alpha_{j}$] (aj) at (0,5) {};
\node[fill, shape=circle, label=below:$\beta_{j}$] (bj) at (0,0) {};
\node[fill, shape=circle, label=above:$\alpha_{i}$] (ai) at (3,5) {};
\node[fill, shape=circle, label=below:$\beta_{i}$] (bi) at (3,0) {};
\node[fill, shape=circle, label=above:$\alpha_{k}$] (ak) at (6,5) {};
\node[fill, shape=circle, label=below:$\beta_{k}$] (bk) at (6,0) {};

\draw[line width=2pt] (aj)--(bj) node[pos=.5, left] {$e_{j}$};
\draw[line width=2pt] (ai)--(bi) node[pos=.5, right] {$e_{i}$};
\draw[line width=2pt] (ak)--(bk) node[pos=.5, right] {$e_{k}$};
\draw[line width=2pt] (ai)--(bj);
\draw[line width=2pt] (ak)--(bi);
\draw[line width=2pt] (aj)--(ai);
\draw[line width=2pt] (ai)--(ak);
\draw[line width=2pt] (bj)--(bi);
\draw[line width=2pt] (bi)--(bk);
\end{tikzpicture}
\end{center}
\end{multicols}

By the $2$-transitivity of $M$ on $\M$, there is $g \in M$ such that $e_i^g = e_i$ and $e_j^g = e_k$.  The $g_i$ that we seek is this $g$, and the result follows.
\end{proof}

We now show that there is a bijection between regular graphs in these two cases, i.e. that the two cases correspond.

\begin{lem}
 \label{lem:regbij}
 There exists a regular graph $\Gamma_0$ on $2m$ vertices with $\Gamma_0[\alpha_1, \beta_1, \alpha_2, \beta_2] \cong P_4$ if and only if there exists a regular graph $\Gamma_1$ on $2m$ vertices with $\Gamma_1[\alpha_1, \beta_1, \alpha_2, \beta_2] \cong K_4 \backslash \{e\}$, and there is a natural bijection between such graphs.  
\end{lem}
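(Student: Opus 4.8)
The plan is to exhibit an explicit involution on graphs that swaps the two induced-subgraph types while fixing the matching, and to take this involution as the asserted natural bijection. Given any graph $\Gamma$ on the vertex set $V = \{\alpha_i, \beta_i : 1 \le i \le m\}$ equipped with the perfect matching $\M$, I would define $T(\Gamma)$ to be the graph on the same vertex set whose edge set consists of the matching edges of $\M$ together with all \emph{cross} non-edges of $\Gamma$ (pairs of vertices lying in distinct edges of $\M$ that are non-adjacent in $\Gamma$); equivalently, $T(\Gamma) = \overline{\Gamma} \cup \M$, obtained by complementing $\Gamma$ and then re-inserting the perfect matching. In words, $T$ retains every matching edge and complements every cross pair.

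First I would record that $T$ is an involution. Since every pair of distinct vertices either forms an edge of $\M$ or joins two different edges of $\M$, applying $T$ twice complements each cross pair twice and leaves $\M$ untouched, so $T(T(\Gamma)) = \Gamma$. Next, $T$ preserves the standing hypotheses of the section. The setwise stabilizer $M = \Aut(\Gamma)_\M$ is unchanged: any $g$ preserving adjacency in $\Gamma$ preserves it in $\overline{\Gamma}$, and if in addition $g$ fixes $\M$ setwise, then $g$ preserves the edge set $\overline{\Gamma}\cup\M$ of $T(\Gamma)$; hence $\Aut(\Gamma)_\M = \Aut(T(\Gamma))_\M$, and in particular $\M$ remains a $2$-transitive perfect matching of $T(\Gamma)$. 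Regularity is preserved as well: if $\Gamma$ is regular of degree $d$, then each vertex has $d-1$ cross-neighbours and hence $(2m-2)-(d-1)$ cross-non-neighbours, so in $T(\Gamma)$ each vertex has degree $(2m-1-d)+1 = 2m-d$.

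The heart of the argument is the behaviour of $T$ on the induced subgraph $\Gamma[\alpha_i,\beta_i,\alpha_j,\beta_j]$, which by Lemma \ref{lem:induced} is the same for every pair $i\neq j$. Between $e_i$ and $e_j$ there are four possible cross pairs; a $P_4$ uses exactly one of them (the two matching edges plus a single connecting edge), while $K_4\backslash\{e\}$ uses exactly three (all four cross pairs but one). Since $T$ complements precisely these cross pairs while retaining $e_i$ and $e_j$, it sends a $P_4$ to two matching edges plus three cross edges, which is $K_4\backslash\{e\}$, and conversely sends $K_4\backslash\{e\}$ back to a $P_4$. Applying this simultaneously to all pairs, $T$ carries a regular graph with every $\Gamma[\alpha_i,\beta_i,\alpha_j,\beta_j]\cong P_4$ to a regular graph with every induced four-set isomorphic to $K_4\backslash\{e\}$, and vice versa.

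Combining these observations, $T$ restricts to a bijection between the regular graphs $\Gamma_0$ with $\Gamma_0[\alpha_1,\beta_1,\alpha_2,\beta_2]\cong P_4$ and the regular graphs $\Gamma_1$ with $\Gamma_1[\alpha_1,\beta_1,\alpha_2,\beta_2]\cong K_4\backslash\{e\}$, each carrying a $2$-transitive perfect matching $\M$, and the existence of one type forces the existence of the other. As a consistency check, $T$ sends the degree $k+1$ predicted by Lemma \ref{lem:modd} in the $P_4$ case to $2m-(k+1) = 3k+1$ in the $K_4\backslash\{e\}$ case, matching that lemma exactly. I do not expect a genuine obstacle here; the only point requiring care is the bookkeeping that every non-matching pair is a cross pair, so that $\overline{\Gamma}\cup\M$ really does complement exactly the cross adjacencies, and this is immediate because $\M$ is a perfect matching.
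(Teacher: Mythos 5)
Your proposal is correct and follows essentially the same route as the paper: the map $T(\Gamma)=\overline{\Gamma}\cup\M$ (complement, then re-insert the matching) is exactly the paper's construction $E(\overline{\Gamma}_1)\cup\M$, preserving $M=\Aut(\Gamma)_\M$ and its $2$-transitivity on $\M$ while swapping $P_4$ and $K_4\backslash\{e\}$ on each induced four-vertex set. Your write-up is in fact more detailed than the paper's (explicitly checking the involution property, regularity, and the degree bookkeeping), but the underlying idea is identical.
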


\begin{proof}
 Suppose we have such a graph $\Gamma_1$.  Note that $M$ is the setwise stabilizer of $\M$ in $\Aut(\Gamma_1)$, which is transitive on $V(\Gamma_1)$ but preserves the matching $\M$.  However, $M$ has (at least) two orbits on the edges of $\Gamma_1$: the edges of $\M$ and the edges not in $\M$.  The complement $\overline{\Gamma}_1$ also has $M$ as a group of automorphisms.  We define $\Gamma_0$ to be the graph with vertex set $V(\Gamma_1)$ and edge set $E(\overline{\Gamma}_1) \cup \M$.  The group $M$ is still $2$-transitive on a perfect matching in this case, but $\Gamma_0[\alpha_1, \beta_1, \alpha_2, \beta_2] \cong P_4$.  The proof in the other direction is analogous. 
\end{proof}

After considering Lemma \ref{lem:regbij}, there are really only three cases left.  We may assume that $\Gamma[\alpha_1, \beta_1, \alpha_2, \beta_2] \cong P_4$, and one of the following holds: (i) $\Aut(\Gamma)$ is primitive on $V(\Gamma)$, (ii) $\Gamma$ is bipartite, or (iii) $\M$ itself is a system of imprimitivity.  (Any other system of imprimitivity is ruled out by the $2$-transitivity of $M$ on $\M$.)

\subsection{The case where $\Gamma[\alpha_1, \beta_1, \alpha_2, \beta_2] \cong P_4$ and $G=\Aut(\Gamma)$ is primitive on vertices}\


\begin{lem}
 \label{lem:edgetrans}
 Assume $\Gamma[\alpha_1, \beta_1, \alpha_2, \beta_2] \cong P_4$ and $G = \Aut(\Gamma)$ is primitive on vertices.  Then $\Gamma$ is a $(G,2)$-arc-transitive graph.
\end{lem}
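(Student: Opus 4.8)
The plan is to establish the two standard ingredients of $(G,2)$-arc-transitivity: that $G=\Aut(\Gamma)$ is vertex-transitive and that the local action $G_{\alpha}^{\Gamma(\alpha)}$ is $2$-transitive (these together are equivalent to transitivity on $2$-arcs). Vertex-transitivity is immediate, since a primitive group is transitive; in particular $\Gamma$ is regular, and by Lemma \ref{lem:modd} $m$ is odd, so $m\ge 3$ and $|V(\Gamma)|=2m\ge 6$. I would fix the vertex $\alpha_1$ with matching partner $\beta_1$, and the heart of the argument is to understand the orbits of the vertex stabilizer on $\Gamma(\alpha_1)$, first inside $M:=\Aut(\Gamma)_\M$ and then inside the full group $G$.

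First I would analyze $M_{\alpha_1}$. Since $M_{\alpha_1}$ fixes $\alpha_1$ and preserves $\M$, it fixes $e_1=\{\alpha_1,\beta_1\}$ and hence fixes $\beta_1$. By Lemma \ref{lem:induced}, for every $j\neq 1$ the induced subgraph $\Gamma[\alpha_1,\beta_1,\alpha_j,\beta_j]\cong P_4$ contains exactly one edge joining $\{\alpha_1,\beta_1\}$ to $\{\alpha_j,\beta_j\}$, and that edge meets exactly one of $\alpha_1,\beta_1$. This partitions $\M\setminus\{e_1\}$ into the ``$\alpha$-edges'' (whose connector meets $\alpha_1$) and the ``$\beta$-edges'' (whose connector meets $\beta_1$); by regularity each part has size $\deg(\alpha_1)-1\ge 1$, and the $\alpha$-edges correspond bijectively to $\Gamma(\alpha_1)\setminus\{\beta_1\}$. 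Because $M_{\alpha_1}$ fixes both $\alpha_1$ and $\beta_1$, it preserves this partition. On the other hand, $2$-transitivity of $M$ on $\M$ makes $M_{e_1}$ transitive on $\M\setminus\{e_1\}$; since $M_{\alpha_1}$ preserves a partition into two nonempty parts, it cannot equal $M_{e_1}$, so $[M_{e_1}:M_{\alpha_1}]=2$ and the nontrivial coset is represented by an element swapping $\alpha_1$ and $\beta_1$, hence swapping the two parts. A standard index-$2$ argument then shows $M_{\alpha_1}$ is transitive on each part; in particular $M_{\alpha_1}$ is transitive on $\Gamma(\alpha_1)\setminus\{\beta_1\}$. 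Thus $M_{\alpha_1}$ has exactly two orbits on $\Gamma(\alpha_1)$: the singleton $\{\beta_1\}$ and $\Gamma(\alpha_1)\setminus\{\beta_1\}$.

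Next I would invoke primitivity to merge these two orbits. Since $M_{\alpha_1}\le G_{\alpha_1}$, the group $G_{\alpha_1}$ has either one or two orbits on $\Gamma(\alpha_1)$. If it had two, then $G_{\alpha_1}$ would fix $\beta_1$, so the suborbit $\{\beta_1\}$ of $G_{\alpha_1}$ has size $1$, and the associated undirected orbital graph $\Sigma$ would be a $G$-invariant graph of valency at most $2$ containing the edge $\{\alpha_1,\beta_1\}$. Such a $\Sigma$ is either a perfect matching or a disjoint union of cycles, and in every case its connected components furnish a nontrivial $G$-invariant partition of $V(\Gamma)$ (in the sole borderline case $\Sigma\cong C_{2m}$ one instead gets $G\le\Aut(C_{2m})=D_{2m}$, which is imprimitive because $2m\ge 6$), contradicting the primitivity of $G$. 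Hence $G_{\alpha_1}$ is transitive on $\Gamma(\alpha_1)$, so $\Gamma$ is $G$-arc-transitive.

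Finally, upgrading to a $2$-transitive local action is now immediate: $G_{\alpha_1\beta_1}\ge M_{\alpha_1\beta_1}=M_{\alpha_1}$, which by the second paragraph is transitive on $\Gamma(\alpha_1)\setminus\{\beta_1\}$. Combined with the arc-transitivity just proved, this shows $G_{\alpha_1}^{\Gamma(\alpha_1)}$ is $2$-transitive, and therefore $\Gamma$ is $(G,2)$-arc-transitive. I expect the main obstacle to be the third step, squeezing a contradiction out of primitivity once $G_{\alpha_1}$ fixes $\beta_1$: one must correctly handle the possibility that the valency-$2$ orbital graph is a single Hamiltonian cycle rather than immediately producing a system of imprimitivity, which is exactly where the bound $2m\ge 6$ is used.
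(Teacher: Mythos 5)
Your proof is correct, but it reaches arc-transitivity by a genuinely different mechanism than the paper. The paper argues globally: since $G$ is primitive, $G_{\alpha_1}$ is maximal in $G$, while the chain $M_{\alpha_1} < M_{e_1} < M$ shows $M_{\alpha_1}$ is not maximal in $M$; hence $M < G$, and since the $2$-transitivity of $M$ on $\M$ gives $M$ exactly two orbits on $E(\Gamma)$ (namely $\M$ and $E(\Gamma)\setminus\M$), any $h \in G \setminus M$ fuses these orbits, so $G$ is edge-transitive; arc-transitivity then follows using the edge-flip element of Lemma \ref{lem:1021vertextrans}. You instead argue locally: you first pin down the suborbit structure of $M_{\alpha_1}$ on $\Gamma(\alpha_1)$ via the $\alpha$-edge/$\beta$-edge partition and the index-$2$ argument, and then use primitivity in the form ``no nontrivial block system'': if $G_{\alpha_1}$ fixed $\beta_1$, the valency-at-most-$2$ orbital graph through $\{\alpha_1,\beta_1\}$ would yield blocks (or trap $G$ inside the automorphism group of a $2m$-cycle, which is imprimitive for $2m \ge 6$). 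The two uses of primitivity are equivalent in strength, but your version buys something real: it supplies a complete proof of the paper's assertion (iv) --- that $G_{\alpha_1\beta_1}$ is transitive on $\Gamma(\alpha_1)\setminus\{\beta_1\}$ --- which the paper states without justification, and the subtle point there is exactly what your index-$2$ argument handles, namely that an element of $M_{e_1}$ carrying one $\alpha$-edge to another cannot swap $\alpha_1$ and $\beta_1$. Conversely, the paper's route is shorter and produces $G$-edge-transitivity as a byproduct, a computation that is reused essentially verbatim in the bipartite case (Lemma \ref{lem:01bipartite}). Two cosmetic remarks: your phrase ``preserves a partition into two nonempty parts, hence cannot equal $M_{e_1}$'' should be read as ``stabilizes each part setwise'' (which is what fixing both $\alpha_1$ and $\beta_1$ actually gives; merely preserving a partition would not preclude transitivity), and your borderline case $\Sigma \cong C_{2m}$ can in fact never occur, since the flip element makes the orbital self-paired and hence forces $\Sigma$ to be a perfect matching --- but including it costs nothing.
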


\begin{proof}
 Since $G$ is primitive on $V(\Gamma)$, $G_{\alpha_1}$ is a maximal subgroup of $G$.  On the other hand, since there is $g \in M$ such that $\alpha_1^g = \beta_1$, $\beta_1^g = \alpha_1$ (see Lemma \ref{lem:1021vertextrans}), we have $M_{\alpha_1} < M_{e_1} < M \le G$, and so $M_{\alpha_1}$ is not a maximal subgroup of $M$.  Thus $M < G$.
 
 By the $2$-transitivity of $M$ on $\M$, $M$ has two orbits on $E(\Gamma)$: $\M$ and $E(\Gamma) \backslash \M$.  Since $M < G$, there is $h \in G \backslash M$, i.e., there is an automorphism that does not preserve $\M$.  This implies that $h$ takes an edge in $\M$ to an edge in $E(\Gamma) \backslash \M$, and so $G$ is transitive on $E(\Gamma)$.
 
 Finally, we note that (i) $\Gamma$ is $G$-vertex-transitive, (ii) $\Gamma$ is $G$-edge-transitive, (iii) there is an element sending the arc $(\alpha_1, \beta_1)$ to the arc $(\beta_1, \alpha_1)$, and (iv) $G_{\alpha_1\beta_1}$ is transitive on $\Gamma(\alpha_1) \backslash \{\beta_1\}$, which implies that $\Gamma$ is a $(G,2)$-arc-transitive graph.
\end{proof}

Consider the labeling of the vertices as in Figure \ref{fig:01prim}.  If we define $D_i(\gamma) := \{\delta \in V(\Gamma) : d(\gamma, \delta) = i \}$, i.e., if $D_i(\gamma)$ is the set of vertices at distance $i$ from the vertex $\gamma$, we can guarantee the distance of all vertices in the graph from $\alpha_1$ except for the set $X$; all we know is that $X \subseteq D_2(\alpha_1) \cup D_3(\alpha_1)$.

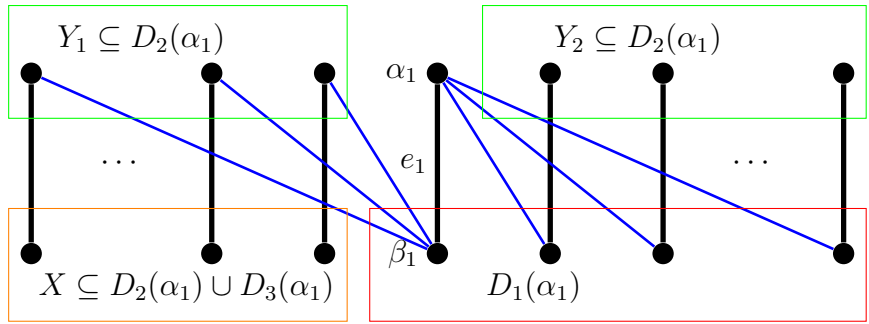
\begin{figure}[h!]
\begin{center}
\begin{tikzpicture}[scale=0.3]
\node[fill, shape=circle] (an) at (-18,8) {};
\node[fill, shape=circle] (bn) at (-18,0) {};
\node[label=above:$\cdots$] at (-14,3) {};
\node[fill, shape=circle] (a5) at (-10,8) {};
\node[fill, shape=circle] (b5) at (-10,0) {};
\node[fill, shape=circle] (a4) at (-5,8) {};
\node[fill, shape=circle] (b4) at (-5,0) {};

\node[fill, shape=circle, label=left:$\alpha_{1}$] (a1) at (0,8) {};
\node[fill, shape=circle, label=left:$\beta_{1}$] (b1) at (0,0) {};

\node[fill, shape=circle] (a2) at (5,8) {};
\node[fill, shape=circle] (b2) at (5,0) {};
\node[fill, shape=circle] (a3) at (10,8) {};
\node[fill, shape=circle] (b3) at (10,0) {};
\node[label=above:$\cdots$] at (14,3) {};
\node[fill, shape=circle] (am) at (18,8) {};
\node[fill, shape=circle] (bm) at (18,0) {};

\draw[line width=2pt] (a1)--(b1) node[pos=.5, left] {$e_{1}$};
\draw[line width=2pt] (a2)--(b2) node[pos=.5, right] {};
\draw[line width=2pt] (a3)--(b3) node[pos=.5, right] {};
\draw[line width=2pt] (am)--(bm) node[pos=.5, right] {};
\draw[line width=2pt] (a4)--(b4) node[pos=.5, right] {};
\draw[line width=2pt] (a5)--(b5) node[pos=.5, right] {};
\draw[line width=2pt] (an)--(bn) node[pos=.5, right] {};
\draw[line width=1pt, color=blue] (a1)--(b2) node[pos=.5, right] {};
\draw[line width=1pt, color=blue] (a1)--(b3) node[pos=.5, right] {};
\draw[line width=1pt, color=blue] (a1)--(bm) node[pos=.5, right] {};
\draw[line width=1pt, color=blue] (a4)--(b1) node[pos=.5, right] {};
\draw[line width=1pt, color=blue] (a5)--(b1) node[pos=.5, right] {};
\draw[line width=1pt, color=blue] (an)--(b1) node[pos=.5, right] {};
\draw[color=red](-3,2) rectangle (19,-3) {};
\draw[color=orange](-4,2) rectangle (-19,-3) {};
\draw[color=green](-4,11) rectangle (-19,6) {};
\draw[color=green](2,11) rectangle (19,6) {};
\node[text width=3.5cm] at (8,-1.5) {$D_1(\alpha_1)$};
\node[text width=3.5cm] at (11,9.5) {$Y_2 \subseteq D_2(\alpha_1)$};
\node[text width=3.5cm] at (-11,9.5) {$Y_1 \subseteq D_2(\alpha_{1})$};
\node[text width=4cm] at (-11,-1.5) {$X \subseteq D_2(\alpha_1) \cup D_3(\alpha_1)$};
\end{tikzpicture}
\caption{Labeling of $\Gamma$ when $\Gamma[\alpha_1, \beta_1, \alpha_2, \beta_2] \cong P_4$ and $G$ is primitive on vertices.}
\label{fig:01prim}
\end{center}
\end{figure}

\begin{lem}
 \label{lem:Xdist3}
 Assume $\Gamma[\alpha_1, \beta_1, \alpha_2, \beta_2] \cong P_4$, $G = \Aut(\Gamma)$ is primitive on vertices, and the subset $X$ is as defined above.  Then $X \cap D_2(\alpha_1) \neq \varnothing$.
\end{lem}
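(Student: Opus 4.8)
The plan is to argue by contradiction: suppose instead that $X \subseteq D_3(\alpha_1)$. Using the labelling of Figure \ref{fig:01prim}, write $D_1(\alpha_1) = \{\beta_1\} \cup R$ with $R := \Gamma(\alpha_1)\backslash\{\beta_1\}$ (the ``$\beta$''-neighbours of $\alpha_1$), $Y_1 = \Gamma(\beta_1)\backslash\{\alpha_1\}$, and recall $Y_2$ consists of the $\M$-partners of the vertices of $R$ while $X$ consists of the $\M$-partners of the vertices of $Y_1$. Under the contradiction hypothesis the distance partition from $\alpha_1$ is exactly $\{\alpha_1\}$, $D_1(\alpha_1)=\{\beta_1\}\cup R$, $D_2(\alpha_1)=Y_1\cup Y_2$, and $D_3(\alpha_1)=X$. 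The goal is to force $\Gamma$ to be bipartite, which will contradict the primitivity of $G$ on $V(\Gamma)$.

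The first step is to show that $\Gamma(\alpha_1)$ is a coclique. By Lemma \ref{lem:edgetrans}, $\Gamma$ is $(G,2)$-arc-transitive, so $G_{\alpha_1}$ is transitive on $\Gamma(\alpha_1)$; hence every vertex of $\Gamma(\alpha_1)$ has the same number of neighbours inside $\Gamma(\alpha_1)$. Since the neighbours of $\beta_1$ are $\alpha_1$ together with the vertices of $Y_1\subseteq D_2(\alpha_1)$, none of which lies in $\Gamma(\alpha_1)$, that common number is $0$; thus $\Gamma(\alpha_1)$, and in particular $R$, is a coclique.

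The main step, and the step I expect to be the main obstacle, is to show that $X$ is a coclique. Here I would invoke the automorphism $g\in M$ produced in the proof of Lemma \ref{lem:1021vertextrans} that fixes $e_1$ and interchanges $\alpha_1$ and $\beta_1$. As $g$ preserves $\M$ and distances, it carries the distance layers of $\alpha_1$ onto those of $\beta_1$, and since it sends $\M$-partners to $\M$-partners it interchanges $R\leftrightarrow Y_1$ and $X\leftrightarrow Y_2$; consequently $D_2(\beta_1)=R\cup X$ and $D_3(\beta_1)=Y_2$. Now repeat the transitivity argument at $\beta_1$: $G_{\beta_1}$ is transitive on $D_2(\beta_1)$, so $\Gamma[R\cup X]$ is vertex-transitive, hence regular. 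But a vertex of $R$ has no neighbour in $R$ (a coclique) and no neighbour in $X$ (vertices of $D_1(\alpha_1)$ and of $D_3(\alpha_1)$ cannot be adjacent), so it has degree $0$ in $\Gamma[R\cup X]$; regularity then forces $\Gamma[R\cup X]$ to be edgeless, so $X$ is a coclique. Applying $g$, which is an isomorphism $\Gamma[D_2(\beta_1)]\to\Gamma[D_2(\alpha_1)]$, shows that $D_2(\alpha_1)=Y_1\cup Y_2$ is a coclique as well.

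Finally, the layers $D_0,D_1,D_2,D_3$ of $\alpha_1$ are then all cocliques, so every edge of $\Gamma$ joins consecutive layers, i.e. joins a vertex at even distance from $\alpha_1$ to one at odd distance; thus $\Gamma$ is bipartite, the two parts being the even- and odd-distance vertices (equivalently the two ``sides'' of $\M$), each of size $m$ with $1<m<2m$. Since $\Gamma$ is connected, this bipartition is its unique $2$-colouring and so is preserved by $\Aut(\Gamma)=G$, yielding a nontrivial $G$-invariant partition of $V(\Gamma)$ and contradicting the primitivity of $G$. Hence $X\not\subseteq D_3(\alpha_1)$, that is $X\cap D_2(\alpha_1)\neq\varnothing$.
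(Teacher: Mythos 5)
Your proof is correct, and it reaches the same endpoint as the paper's: assuming $X = D_3(\alpha_1)$, every layer of the distance partition from $\alpha_1$ is shown to be a coclique, so $\Gamma$ is bipartite with parts the two sides of $\M$, contradicting primitivity. However, the mechanism in the middle is genuinely different. The paper asserts (tersely) that $\Gamma$ is distance-transitive of diameter $3$ --- which requires in particular that $G_{\alpha_1}$ be transitive on $X = D_3(\alpha_1)$, a fact that needs the $2$-transitivity of $M$ on $\M$ and not just Lemma \ref{lem:edgetrans} --- and then kills the edges inside $D_2(\alpha_1)$ by comparing the layerings at $\alpha_1$ and $\beta_1$. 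You avoid distance-transitivity entirely: you use only $(G,2)$-arc-transitivity (Lemma \ref{lem:edgetrans}), the endpoint-swapping automorphism $g$ from Lemma \ref{lem:1021vertextrans}, and the observation that $\Gamma[D_2(\beta_1)] = \Gamma[R \cup X]$ is vertex-transitive, hence regular, while its $R$-vertices are isolated (no edges inside $R$ by your coclique step, and no edges between $D_1(\alpha_1)$ and $D_3(\alpha_1)$); regularity then forces $X$ to be a coclique, and transporting by $g$ handles $D_2(\alpha_1) = Y_1 \cup Y_2$. This is arguably more self-contained than the paper's appeal to distance-transitivity, whose full justification the paper omits. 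The one place where you are terse is the claim that $G_{\beta_1}$ is transitive on $D_2(\beta_1)$; it does follow, but deserves a line: by Lemma \ref{lem:edgetrans}, $G_{\beta_1}$ is transitive on the $2$-arcs emanating from $\beta_1$, and since $\Gamma(\beta_1)$ is a coclique (apply $g$ to your first step, or repeat that argument at $\beta_1$), the endpoints of such $2$-arcs are exactly the vertices of $D_2(\beta_1)$, so $D_2(\beta_1)$ is a single $G_{\beta_1}$-orbit.
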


\begin{proof}
 Suppose that $X \cap D_2(\alpha_1) = \varnothing$, that is, $X = D_3(\alpha_1)$.  By Lemma \ref{lem:edgetrans} and the fact that $\M$ is a $2$-transitive perfect matching, $\Gamma$ is distance-transitive with diameter $3$.  We will show that, if $\gamma, \delta \in X$, then $\gamma \not\sim \delta$.  Indeed, suppose $\gamma \in D_3(\alpha_1) = X$.  This means that $d(\beta_1, \gamma) = 2$.  Since $X \cap D_2(\alpha_1) = \varnothing$, there are no edges from $D_1(\alpha_1)$ to $X$.  Similarly, since $\Gamma$ is vertex-transitive, there are no edges from $D_1(\beta_1) = Y_1 \cup \{\alpha_1\}$ to $D_3(\beta_1) = Y_2$ (see Figure \ref{fig:01prim}).  Hence, if $\delta \in D_1(\beta_1)$, $\delta$ has no neighbors in $Y_2$.  Since $\Gamma$ is distance-transitive, this means that no vertex in $D_2(\alpha_1)$ has any neighbors in $D_2(\alpha_1)$, i.e., all edges in $\Gamma$ are from $A = \{\alpha_1\} \cup Y_1 \cup Y_2$ to $X \cup D_1(\alpha_1)$.  However, this means that $\Gamma$ is bipartite, in contradiction to $\Gamma$ being vertex-primitive.  Therefore, $X \cap D_2(\alpha_1) \neq \varnothing$.
\end{proof}

\begin{lem}
 \label{lem:01prim}
 Assume $\Gamma[\alpha_1, \beta_1, \alpha_2, \beta_2] \cong P_4$ and $G = \Aut(\Gamma)$ is primitive on vertices.  Then $\Gamma$ is isomorphic to the Petersen graph.
\end{lem}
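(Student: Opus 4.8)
The plan is to prove that $\Gamma$ is a triangle-free strongly regular graph whose parameters are forced to coincide with those of the Petersen graph, and then to invoke the uniqueness of the strongly regular graph with parameters $(10,3,0,1)$. Throughout I write $m = 2k+1$, so that by Lemma~\ref{lem:modd} the graph $\Gamma$ is regular of degree $k+1$ on $2m = 4k+2$ vertices, and by Lemma~\ref{lem:edgetrans} it is $(G,2)$-arc-transitive.

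The first step is to pin down the diameter. Using the labelling of Figure~\ref{fig:01prim}, the edges of $\M$ other than $e_1$ split into those whose connecting edge to $e_1$ meets $\beta_1$ (contributing the sets $Y_1$ and $X$) and those whose connecting edge meets $\alpha_1$ (contributing $\Gamma(\alpha_1)\setminus\{\beta_1\}$ and $Y_2$), each class having size $k$. Since $M_{\alpha_1}=M_{\alpha_1\beta_1}$ has index two in $M_{e_1}$ and the element interchanging $\alpha_1$ and $\beta_1$ (which exists by Lemma~\ref{lem:1021vertextrans}) swaps these two classes, a standard index-two argument shows that $M_{\alpha_1}$ is transitive on each class separately. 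In particular $X$ is a single $M_{\alpha_1}$-orbit; as $M_{\alpha_1}$ fixes $\alpha_1$ and preserves $\M$, it preserves both distance from $\alpha_1$ and the set $X$, so all of $X$ lies in one distance class. Combined with Lemma~\ref{lem:Xdist3}, which gives $X\cap D_2(\alpha_1)\neq\varnothing$, this forces $X\subseteq D_2(\alpha_1)$. Since every vertex lies in $\{\alpha_1\}\cup\Gamma(\alpha_1)\cup Y_1\cup Y_2\cup X$ and $Y_1,Y_2\subseteq D_2(\alpha_1)$, the graph $\Gamma$ has diameter $2$.

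The second step upgrades this to strong regularity. Arc-transitivity makes $G_{\alpha_1}$ transitive on $D_1(\alpha_1)$, and $(G,2)$-arc-transitivity makes $G_{\alpha_1}$ transitive on $D_2(\alpha_1)$ (any two vertices at distance $2$ are the ends of $2$-arcs based at $\alpha_1$, which can be mapped to one another while fixing $\alpha_1$); hence $\Gamma$ is distance-transitive of diameter $2$ and therefore strongly regular with parameters $(4k+2,\,k+1,\,\lambda,\,\mu)$. The $P_4$ structure gives $\Gamma(\alpha_1)\cap\Gamma(\beta_1)=\varnothing$, since one neighbourhood consists only of matched partners of one type and the other only of partners of the other type, so $\lambda=0$ and $\Gamma$ is triangle-free. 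The standard strongly regular identity (degree)$\cdot$(degree$-\lambda-1)=(v-\text{degree}-1)\mu$, namely $(k+1)k=3k\mu$, then yields $\mu=(k+1)/3$.

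The final step forces $k=2$ via the integrality conditions for strongly regular graphs. With $k+1=3\mu$, the non-principal eigenvalues are $\tfrac{1}{2}\bigl(-\mu\pm\sqrt{\mu^2+8\mu}\,\bigr)$. One checks that the equal-multiplicity (conference) case, which requires $2(k+1)=(4k+1)\mu$, cannot occur for an integer $k$, so $\mu^2+8\mu=(\mu+4)^2-16$ must be a perfect square; the equation $(\mu+4)^2-s^2=16$ has $\mu=1$ as its only solution with $\mu\ge1$. Thus $\mu=1$, $k=2$, and $\Gamma$ is a strongly regular graph with parameters $(10,3,0,1)$, which is uniquely the Petersen graph. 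I expect the eigenvalue/integrality step to be the main obstacle: it requires separating off the conference case and carries the essential content, namely the number-theoretic reduction to a single feasible parameter set; establishing diameter $2$ through the $M_{\alpha_1}$-orbit argument is the other place demanding care, since it is what rules out larger triangle-free strongly regular graphs.
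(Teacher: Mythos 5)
Your proof is correct, and its first half — establishing $X\subseteq D_2(\alpha_1)$ via the orbit argument and Lemma \ref{lem:Xdist3}, hence diameter $2$, hence a triangle-free, distance-transitive strongly regular graph with parameters $(4k+2,\,k+1,\,0,\,\mu)$ and $\mu=(k+1)/3$ — is exactly the paper's route; in fact your index-two argument supplies the justification that the paper leaves implicit when it asserts that $M_{\alpha_1}$ is transitive on $X$. Where you genuinely diverge is the number-theoretic endgame. The paper substitutes the parameters into the \emph{multiplicity} formula and requires $\frac{(4k^2-k-5)^2}{(k+1)(k+25)}=16k^2-424k+10585-\frac{264600}{k+25}$ to be a perfect square (the paper prints the constant as $264000$, an apparent typo), which after factoring the constant and testing divisors yields the candidate list $k\in\{2,24\}$; the extraneous solution $k=24$ must then be discarded using the separate condition $\mu=(k+1)/3\in\Z$. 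You instead invoke \emph{eigenvalue} integrality: after checking that the conference case $2(k+1)=(4k+1)\mu$ is impossible, the discriminant $\mu^2+8\mu=(\mu+4)^2-16$ must be a perfect square, and the difference-of-squares equation $(\mu+4-s)(\mu+4+s)=16$ has $\mu=1$ as its only solution with $\mu\ge 1$, giving $k=2$ directly. Your version is the cleaner of the two: by parametrizing with $\mu$ you build the divisibility constraint $3\mid k+1$ in from the start, avoid the large factorization entirely, and produce no extraneous solution; the price is that you must explicitly separate off the conference case, which the paper's multiplicity computation handles implicitly (its numerator $4k^2-k-5=(4k-5)(k+1)$ never vanishes for integer $k$). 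Both arguments then finish by the uniqueness of the $(10,3,0,1)$ strongly regular graph. The only item in the paper's proof absent from yours is the closing verification that the Petersen graph really does carry a $2$-transitive perfect matching, but that is a converse statement not required by the lemma as stated.
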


\begin{proof}
 We again assume that vertices are labeled as in Figure \ref{fig:01prim}.  By the $2$-transitivity of $M$ on $\M$, $M_{\alpha_1}$ is transitive on $X$, and so $X \subseteq D_2(\alpha_1)$.  Hence $\Gamma$ has diameter $2$, and, by Lemma \ref{lem:edgetrans}, $\Gamma$ is a distance-transitive, diameter $2$, triangle-free strongly regular graph.  (These are known as \textit{rank 3 graphs} since, for any vertex $\alpha \in V(\Gamma)$, the stabilizer of $\alpha$ is a primitive group of rank $3$ on vertices.)  We note that $\Gamma$ is a $(4k+2, k+1, 0, \mu)$-strongly regular graph.
 
 By the classic equation relating the parameters (see \cite{DRG}),
\[(k+1)k  = [(4k+2)-(k+1)-1]\mu, \]
and so $\mu =(k+1)/3$.

The eigenvalues of the adjacency matrix for this graph and their multiplicities are known (again, see \cite{DRG}). There are three eigenvalues: $k+1$, with multiplicity one, and two others. The multiplicities of these other two eigenvalues are
\begin{align*}
& \frac{1}{2}\left((4k+1)\pm\frac{(4k+1)\left(\frac{k+1}{3}\right)-(2k+2)}{\sqrt{\left(\frac{k+1}{3}\right)^{2}+8\frac{k+1}{3}}}\right)\\
& =  \frac{1}{2}\left((4k+1)\pm\frac{4k^{2}-k-5}{\sqrt{(k+1)(k+25)}}\right) \in \Z.\\
\end{align*}

This implies that \[\frac{(4k^{2}-k-5)^{2}}{(k+1)(k+25)}=16k^{2}-424k+10585-\frac{264000}{k+25}\] is a perfect square. The last term allows us, via factoring, to come up with a list of values of $k$ to check, which yields \[k=2 \text{ or } k=24.\] But, together with $\mu=\frac{k+1}{3}\in\Z$, we rule out $k=24$, so the only graph in this case is strongly regular with parameters $(10,3,0,1)$ (corresponding to $k=2$), which is the Petersen graph.  It can be verified that the Petersen graph has a $2$-transitive perfect matching by direct inspection.  For instance, if the vertices of the Petersen graph $\mathcal{P}$ are represented as subsets of size two of $\{1,2,3,4,5\}$, then $\Aut(\mathcal{P}) = S_5$ is $2$-transitive on the matching 
\[ \M = \left\{ \{\{1,2\},\{3,4\}\}, \{\{3,5\},\{2,4\}\}, \{\{1,4\},\{2,5\}\}, \{\{2,3\},\{1,5\}\}, \{\{4,5\},\{1,3\}\} \right\}.\]
\end{proof}

\subsection{The case where $\Gamma[\alpha_1, \beta_1, \alpha_2, \beta_2] \cong P_4$ and $G=\Aut(\Gamma)$ is imprimitive on vertices}\


\begin{lem}
 \label{lem:01imprim}
 Assume $\Gamma[\alpha_1, \beta_1, \alpha_2, \beta_2] \cong P_4$ and that $\Pi = \{\{\alpha_i, \beta_i\} : 1 \le i \le m\}$ is a system of imprimitivity on $V(\Gamma)$.  Then $m = p^f$, where $p$ is a prime and $p^f \equiv 3 \pmod 4$, and $\Gamma$ is isomorphic to the incidence graph of the Paley symmetric $2$-design over $\GF(p^f)$.
\end{lem}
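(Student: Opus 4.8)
The plan is to recognise $\Gamma$ as the incidence graph of a tournament on the $m$ edges of $\M$, and then to force that tournament to be a Paley (quadratic-residue) tournament by exploiting the $2$-transitivity of $M$.

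\smallskip

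\noindent\textbf{Reduction to a tournament.} Since $\Pi=\M$ is a system of imprimitivity, every automorphism sends matching edges to matching edges, so $M=\Aut(\Gamma)$ and $M$ acts $2$-transitively on the blocks $e_1,\dots,e_m$. I would first show that the kernel $K$ of this block action is trivial: an element of $K$ fixes each $e_i$ setwise, so it transposes the vertices of some set $F$ of blocks; because $\Gamma[\alpha_i,\beta_i,\alpha_j,\beta_j]\cong P_4$ has a \emph{unique} cross-edge, joining one prescribed vertex of $e_i$ to one prescribed vertex of $e_j$, transposing the vertices of any nonempty $F$ turns some cross-edge into a non-edge. Hence $K=1$ and $M$ acts faithfully and $2$-transitively on $\{e_1,\dots,e_m\}$. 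Next I would prove that $\Gamma$ is bipartite: each $P_4$ carries a local bipartition, and using the vertex-transpositions from Lemma~\ref{lem:1021vertextrans} together with the $2$-transitivity I would show that these local bipartitions cohere globally (equivalently, that $\Gamma$ has no odd cycle). Relabelling the vertices within blocks, the parts become $A=\{\alpha_i\}$ and $B=\{\beta_i\}$, and setting $i\to j$ whenever $\alpha_i\sim\beta_j$ defines a tournament $T$ on $[m]$ with $\alpha_i\sim\beta_j$ exactly when $i=j$ or $i\to j$; that is, $\Gamma$ is the incidence graph $\Gamma_T$ of $T$.

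\smallskip

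\noindent\textbf{From $2$-transitivity to arc-transitivity of $T$.} Because $\Gamma$ is connected and bipartite, its bipartition $\{A,B\}$ is preserved by $M$, so each $g\in M$ either fixes both parts --- in which case no block is flipped and $g$ induces an \emph{automorphism} of $T$ --- or interchanges them --- in which case every block is flipped and $g$ induces an \emph{anti-automorphism} of $T$ (it reverses every arc). The faithful block action therefore identifies $M$ with the group $\Aut^{\pm}(T)$ of automorphisms and anti-automorphisms of $T$, and the hypothesis becomes: $\Aut^{\pm}(T)$ is $2$-transitive on $[m]$. A tournament has no automorphism of order $2$ (such an involution would reverse the arc between one of its transposed pairs), so $\Aut(T)$ has odd order; and $\Aut(T)$ cannot itself be $2$-transitive, since a $2$-transitive automorphism group would map some arc to its reverse. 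Thus $\Aut(T)$ is a proper, and hence index-$2$, subgroup of $\Aut^{\pm}(T)$, necessarily transitive on the arcs of $T$: the tournament $T$ is arc-transitive.

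\smallskip

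\noindent\textbf{Identifying $T$ as Paley.} Write $G:=\Aut^{\pm}(T)$, so $|G|=2\,|\Aut(T)|=2\cdot(\text{odd})$. From $m(m-1)\mid |G|$ and $|G|_2=2$ we get $4\nmid m-1$; as $m$ is odd by Lemma~\ref{lem:modd}, this yields $m\equiv 3\pmod 4$. By Burnside's normal $p$-complement theorem (the Sylow $2$-subgroup of $G$ has order $2$), $\Aut(T)$ is a normal subgroup of odd order and index $2$; the socle of $G$ cannot then be nonabelian simple, for by Feit--Thompson a normal subgroup of odd order contains no simple socle of even order. Hence $G$ is of affine type: $m=p^{f}$ with $p$ odd and $G=V\rtimes G_0\le A\Gamma L(f,p)$ with $V\cong \GF(p^{f})^{+}$ regular and normal. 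Identifying the vertices of $T$ with $\GF(p^{f})$, the out-neighbours of $0$ constitute one of the two orbits of the index-$2$ subgroup $\Aut(T)_0$ on $\GF(p^{f})^{\times}$, and the anti-automorphism fixing $0$ (forced to act as $x\mapsto -x$) interchanges the two orbits; this forces the out-set to be the nonzero squares, so $T$ is the Paley tournament and $\Gamma\cong\Gamma_T$ is the incidence graph of the Paley symmetric $2$-design over $\GF(p^{f})$. I expect the main obstacle to be this final identification --- showing that the distinguished orbit is exactly the squares (equivalently, that an arc-transitive affine tournament is the quadratic-residue one) --- while a secondary technical point is the coherence argument establishing bipartiteness in the first step.
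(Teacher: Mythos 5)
Your route is genuinely different from the paper's --- the paper deletes $\M$, observes that the remaining graph has quotient $K_m$ with exactly one edge between blocks and is arc-transitive, and then reads the answer off the tables of the cited classification of symmetric spreads of complete graphs --- and much of your group theory is correct: the kernel of the block action is trivial, a tournament admits no involutory automorphism, $\Aut(T)$ has odd order and index $2$ in $\Aut^{\pm}(T)$ and is therefore arc-transitive, $m \equiv 3 \pmod 4$, and Burnside plus Feit--Thompson force an elementary abelian regular normal subgroup. But there is a genuine gap at the step on which the whole reduction rests: bipartiteness. ``The local bipartitions cohere'' is exactly the statement that the signing of $K_m$ which records, for each pair of blocks, whether their unique cross-edge joins ``matched'' or ``opposite'' vertices is balanced, and $2$-transitivity of $M$ on the blocks does not by itself force balance: the unbalanced triples of any such signing form an $M$-invariant two-graph, and nontrivial $2$-transitive two-graphs exist (Taylor's classification), so no soft coherence argument can succeed. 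Concretely, the enemy is an involution $t \in M$ whose block action fixes two or more edges of $\M$: since each pair of blocks carries a unique cross-edge, such a $t$ must fix all four vertices of any two fixed blocks, and then for any fixed block $i$ and any $2$-cycle $(l\,r)$ of $t$ one checks that the three cross-edges among the blocks $i,l,r$ together with the matching edges contain an odd cycle. In the tournament picture such involutions cannot exist, but you cannot invoke the tournament before proving bipartiteness --- that would be circular --- and nothing in your proposal excludes them; excluding them is essentially what the paper's citation buys.

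The second gap is the final identification, which you rightly flag as the main obstacle, but the mechanism you propose cannot supply it. Negation $x \mapsto -x$ is an anti-automorphism fixing $0$ of \emph{every} translation-invariant tournament on $\GF(p^f)$ (if $y - x \in S$ then $(-x)-(-y) = y - x \in S$, so arcs are reversed), and it always interchanges the out-set and in-set of $0$: this is just the defining property $-S = \GF(p^f)^{\times}\setminus S$ of a tournament symbol and holds for non-Paley symbols as well, so it cannot force $S$ to be the set of nonzero squares (nor is the anti-automorphism fixing $0$ ``forced'' to be negation; the coset $\Aut(T)_0\tau$ gives many). What is actually needed is: (i) $G$ is solvable ($\Aut(T)$ has odd order, hence is solvable by Feit--Thompson, and has index $2$), so Huppert's classification of solvable $2$-transitive groups applies; the finitely many exceptional degrees there are excluded because their point stabilizers have order divisible by $4$, contradicting $|G|_2 = 2$, whence $G_0 \le \Gamma L(1,p^f)$; and (ii) every odd-order subgroup of $\Gamma L(1,p^f)$ lies in the index-$2$ subgroup $Q \rtimes \mathrm{Gal}$, where $Q$ is the group of nonzero squares (any element outside that subgroup has even order), hence preserves the square/non-square partition; since the out-set of $0$ is an orbit of the odd-order group $\Aut(T)_0$ of size $(p^f-1)/2$, it equals $Q$ or its complement, and either choice gives the Paley tournament. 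With (i), (ii), and a genuine proof of bipartiteness supplied, your argument would go through and would be an attractive, nearly self-contained alternative to the paper's table look-up; as written, these three points are missing.
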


\begin{proof}
 Suppose $\Pi = \{\{\alpha_i, \beta_i\} : 1 \le i \le m\}$ is a system of imprimitivity on $V(\Gamma)$.  This implies that $G = M$.  We remove the edge orbit $\M$ from $\Gamma$ to create a new graph $\Gamma'$; since $G = M$, $\M$ is an orbit of the edges of $\Gamma$ under $G$, and $G$ still acts $2$-transitively on the system of imprimitivity $\Pi$.  However, each block in $\Pi$ is now an independent set.  The quotient graph $\Gamma'_\Pi$ will be the complete graph $K_{m}$, and there is exactly one edge between any two blocks in $\Gamma'$.  By the $2$-transitivity of $M$ on $\Pi$, $\Gamma'$ is $M$-arc-transitive.  Hence $\Gamma'$ is a \textit{symmetric spread} of the complete graph $K_m$ (see \cite{SymmSpreads}).  By inspection of \cite[Tables 1, 2]{SymmSpreads}, the only possibility for $\Gamma$ is the incidence graph of the Paley symmetric $2$-design over $\GF(p^f)$.  Moreover, if $\Gamma$ is such a graph, then $V(\Gamma) = \GF(p^f) \times \{0,1\}$, and $\Aut(\Gamma)$ acts $2$-transitively on each copy of $\GF(p^f)$ (simultaneously).  Hence the matching $\{ \{(x,0), (x,1)\} : x \in \GF(p^f)\}$ is a $2$-transitive perfect matching.
\end{proof}

Our final case is when $\Gamma[\alpha_1, \beta_1, \alpha_2, \beta_2] \cong P_4$ and $\Gamma$ is bipartite.

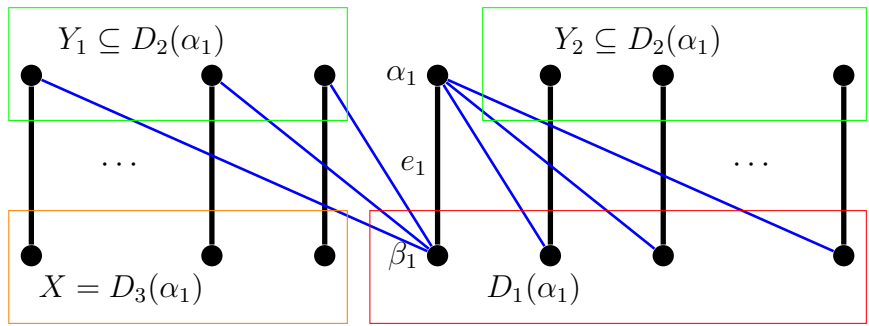
\begin{figure}[h!]
\begin{center}
\begin{tikzpicture}[scale=0.3]
\node[fill, shape=circle] (an) at (-18,8) {};
\node[fill, shape=circle] (bn) at (-18,0) {};
\node[label=above:$\cdots$] at (-14,3) {};
\node[fill, shape=circle] (a5) at (-10,8) {};
\node[fill, shape=circle] (b5) at (-10,0) {};
\node[fill, shape=circle] (a4) at (-5,8) {};
\node[fill, shape=circle] (b4) at (-5,0) {};

\node[fill, shape=circle, label=left:$\alpha_{1}$] (a1) at (0,8) {};
\node[fill, shape=circle, label=left:$\beta_{1}$] (b1) at (0,0) {};

\node[fill, shape=circle] (a2) at (5,8) {};
\node[fill, shape=circle] (b2) at (5,0) {};
\node[fill, shape=circle] (a3) at (10,8) {};
\node[fill, shape=circle] (b3) at (10,0) {};
\node[label=above:$\cdots$] at (14,3) {};
\node[fill, shape=circle] (am) at (18,8) {};
\node[fill, shape=circle] (bm) at (18,0) {};

\draw[line width=2pt] (a1)--(b1) node[pos=.5, left] {$e_{1}$};
\draw[line width=2pt] (a2)--(b2) node[pos=.5, right] {};
\draw[line width=2pt] (a3)--(b3) node[pos=.5, right] {};
\draw[line width=2pt] (am)--(bm) node[pos=.5, right] {};
\draw[line width=2pt] (a4)--(b4) node[pos=.5, right] {};
\draw[line width=2pt] (a5)--(b5) node[pos=.5, right] {};
\draw[line width=2pt] (an)--(bn) node[pos=.5, right] {};
\draw[line width=1pt, color=blue] (a1)--(b2) node[pos=.5, right] {};
\draw[line width=1pt, color=blue] (a1)--(b3) node[pos=.5, right] {};
\draw[line width=1pt, color=blue] (a1)--(bm) node[pos=.5, right] {};
\draw[line width=1pt, color=blue] (a4)--(b1) node[pos=.5, right] {};
\draw[line width=1pt, color=blue] (a5)--(b1) node[pos=.5, right] {};
\draw[line width=1pt, color=blue] (an)--(b1) node[pos=.5, right] {};
\draw[color=red](-3,2) rectangle (19,-3) {};
\draw[color=orange](-4,2) rectangle (-19,-3) {};
\draw[color=green](-4,11) rectangle (-19,6) {};
\draw[color=green](2,11) rectangle (19,6) {};
\node[text width=3.5cm] at (8,-1.5) {$D_1(\alpha_1)$};
\node[text width=3.5cm] at (11,9.5) {$Y_2 \subseteq D_2(\alpha_1)$};
\node[text width=3.5cm] at (-11,9.5) {$Y_1 \subseteq D_2(\alpha_{1})$};
\node[text width=4cm] at (-11,-1.5) {$X = D_3(\alpha_1)$};
\end{tikzpicture}
\caption{Labeling of $\Gamma$ when $\Gamma[\alpha_1, \beta_1, \alpha_2, \beta_2] \cong P_4$, $M < G$, $\Gamma$ bipartite}
\label{fig:01bip}
\end{center}
\end{figure}

\begin{lem}
 \label{lem:01bipartite}
 Assume $\Gamma[\alpha_1, \beta_1, \alpha_2, \beta_2] \cong P_4$ and that $\Gamma$ is bipartite.  Then $m = p^f$, where $p$ is a prime and $p^f \equiv 3 \pmod 4$, and $\Gamma$ is isomorphic to the incidence graph of the Paley symmetric $2$-design over $\GF(p^f)$.
\end{lem}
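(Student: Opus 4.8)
The plan is to reduce to the very same symmetric-spread classification that drives the proof of Lemma~\ref{lem:01imprim}, the only real difference being that here I would work with the matching stabilizer $M = \Aut(\Gamma)_\M$ rather than with $G = \Aut(\Gamma)$: in the bipartite case one has $M < G$, so $\M$ need not be $G$-invariant, but it is $M$-invariant by definition, and that is all the spread argument actually consumes.

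First I would fix the bipartition. Since $\Gamma$ is connected and bipartite its two colour classes are uniquely determined, and every edge of $\M$ has one endpoint in each class; I relabel within each edge so that $A := \{\alpha_i\}_{i=1}^m$ and $B := \{\beta_i\}_{i=1}^m$ are the two classes. Because $\Gamma[\alpha_i,\beta_i,\alpha_j,\beta_j] \cong P_4$ for every pair $i \neq j$ (Lemma~\ref{lem:induced}), there are no edges inside $A$ or inside $B$, and between the two matching edges $e_i,e_j$ there is exactly one non-matching edge, necessarily of the form $\{\alpha_i,\beta_j\}$ or $\{\alpha_j,\beta_i\}$.

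Next I would run the spread argument with the partition $\Pi := \M = \{\{\alpha_i,\beta_i\}\}$, which is $M$-invariant, with $M$ acting $2$-transitively on $\Pi$. Deleting the edges of $\M$ produces a graph $\Gamma'$ on which $M$ still acts, in which each block $\{\alpha_i,\beta_i\}$ is independent and in which there is exactly one edge between any two blocks, so the quotient $\Gamma'_\Pi$ is $K_m$. Exactly as in Lemma~\ref{lem:01imprim}, the $2$-transitivity of $M$ on $\Pi$ forces $M$ to be arc-transitive on $\Gamma'$: given two arcs, send the ordered pair of blocks containing them to one another, and the unique edge between each pair of blocks then matches the two arcs endpoint-for-endpoint. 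Hence $\Gamma'$ is an $M$-arc-transitive symmetric spread of $K_m$, and the classification in \cite[Tables 1, 2]{SymmSpreads} leaves the incidence graph of the Paley symmetric $2$-design over $\GF(p^f)$ as the only possibility, forcing $m = p^f$ with $p^f \equiv 3 \pmod 4$. I would finish, as at the end of the proof of Lemma~\ref{lem:01imprim}, by confirming that this graph does indeed carry a $2$-transitive perfect matching.

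The main thing to get right is the legitimacy of the reduction: I must check that the Lemma~\ref{lem:01imprim} argument goes through even though $\M$ is only $M$-invariant, not $G$-invariant. This is fine because the spread step uses only the $2$-transitive action of $M$ on $\M$ together with the local $P_4$ structure, both available here. The one genuinely new ingredient is the bipartite hypothesis, which I would use solely to guarantee that the single edge between each pair of blocks joins the two colour classes; this is what selects the \emph{bipartite} incidence-graph entry of the tables rather than its clique-completed (non-bipartite) relative, which instead surfaces in the $K_4 \backslash \{e\}$ case. Conceptually the object produced is a doubly regular (Paley) tournament on $m$ points with a $2$-transitive automorphism group, and once the reduction is in place the cited classification supplies the rest.
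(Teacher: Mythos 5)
Your route is genuinely different from the paper's. The paper reuses the spread argument of Lemma \ref{lem:01imprim} only in the sub-case $\Aut(\Gamma) = M$; when $M < \Aut(\Gamma)$ it argues instead that $G = \Aut(\Gamma)$ is edge-transitive, hence $(G,2)$-arc-transitive, hence distance-transitive of diameter $3$, then invokes \cite[Theorem 5.10.3]{godsilroyle} to recognize $\Gamma$ as the incidence graph of a symmetric $2$-design whose automorphism group has rank at most $3$ on points, and finally eliminates the non-Paley designs in that classification one by one. You propose instead to run the spread argument with $M$ in place of $G$ throughout. Your preparatory steps are all correct: $\M$ is $M$-invariant, the $P_4$ condition gives exactly one edge between any two blocks after deleting $\M$, and $2$-transitivity of $M$ on blocks together with the uniqueness of the inter-block edge does force $M$-arc-transitivity of $\Gamma' = \Gamma - \M$.

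The gap is the final step, where everything rests on an inspection of \cite[Tables 1, 2]{SymmSpreads} that you have not carried out, and in a setting strictly broader than the one in which the paper invokes those tables. Two things would need verification. First, scope: the paper applies that classification only when $\M$ is invariant under the full automorphism group and $\Gamma$ is regular, whereas in your setting $\M$ is genuinely not $\Aut(\Gamma)$-invariant in general (for $C_6$, which satisfies the hypotheses of this lemma with $m=3$, the one-step rotation destroys $\M$), the group $M$ may be much smaller than $\Aut(\Gamma)$, and the spread can be disconnected ($C_6 - \M = 3 \cdot K_2$; the Petersen graph minus its matching is two disjoint $5$-cycles). You must check that the cited classification is stated for an arbitrary arc-transitive group preserving the partition, with no connectivity or full-automorphism-group hypothesis; otherwise the citation does not apply. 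Second, completeness of your bipartiteness filter: the Petersen graph with $M \cong F_{20}$ satisfies every hypothesis you use except bipartiteness, and its spread (two disjoint $5$-cycles) is a symmetric spread of $K_5$, as is $K_m$ together with $m$ isolated vertices (whose completion is $K_m \veebar \overline{K}_m$); so the tables, if applicable, must contain these and any other such entries, and your proof is complete only once each non-Paley entry is shown to have a non-bipartite (or non-regular) completion. That this can be done is plausible, and your tournament heuristic points the right way, but note it is misstated: a tournament can never have a $2$-transitive automorphism group (such groups have even order, while tournament automorphism groups have odd order); the correct statement is that the index-two side-preserving subgroup of $M$ acts as an odd-order $2$-homogeneous group, which is what ultimately forces the Paley structure. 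As written, the decisive step of your proof is asserted rather than proved, and it is precisely the step the paper's authors avoided relying on in this case.
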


\begin{proof}
 Assume $\Gamma[\alpha_1, \beta_1, \alpha_2, \beta_2] \cong P_4$ and that $\Gamma$ is bipartite.  If $G := \Aut(\Gamma) = M$, then this case has been resolved by Lemma \ref{lem:01imprim}.  Hence we may assume that $M < G$.  By the $2$-transitivity of $M$ on $\M$, $M$ has two orbits on $E(\Gamma)$: $\M$ and $E(\Gamma) \backslash \M$.  Since $M < G$, there is $h \in G \backslash M$, i.e., there is an automorphism that does not preserve $\M$.  This implies that $h$ takes an edge in $\M$ to an edge in $E(\Gamma) \backslash \M$, and so $G$ is transitive on $E(\Gamma)$.
 
 Since (i) $G$ is transitive on the edges of $\Gamma$, (ii) $G$ is transitive on the vertices of $\Gamma$, (iii) there is an element sending the arc $(\alpha_1, \beta_1)$ to the arc $(\beta_1, \alpha_1)$ by Lemma \ref{lem:1021vertextrans}, and (iv) $G_{\alpha_1\beta_1}$ is transitive on $\Gamma(\alpha) \backslash \{\beta_1\}$, we have that $\Gamma$ is a $(G,2)$-arc-transitive graph.  Since $\Gamma$ is bipartite, using the labeling of Figure \ref{fig:01bip}, we have $D_2(\alpha_1) = Y_1 \cup Y_2$ and $D_3(\alpha_1) = X$.  Since $\Gamma$ is $(G,2)$-arc-transitive and $M_{\alpha_1}$ is transitive on $X$, $\Gamma$ is a distance-transitive graph of diameter $3$.  By \cite[Theorem 5.10.3]{godsilroyle}, $\Gamma$ is the incidence graph of a symmetric $2$-design.  The points of the design are represented by one of the biparts of $\Gamma$.  The stabilizer of a point (i.e., of $\alpha_1$, say) has at most three orbits on points: (i) $\{\alpha_1\}$, (ii) the set of all points incident with ``block'' $\beta_1$, and (iii) set of all points not incident with ``block'' $\beta_1$.  This means that $\Gamma$ is the incidence graph of a rank 2 or 3 symmetric $2$-design.  Such symmetric $2$-designs have been classified \cite{dempwolffclassify1, dempwolffclassify2, kantorclassify}.  The only possibilities, other than the Paley symmetric $2$-designs, are: the Hadamard design with $11$ points where each point is incident with $5$ blocks, which gives the same incidence graph as the Paley symmetric $2$-design on $11$ points; the design with $35$ points where each point is incident with exactly $17$ blocks, which is ruled out since the only $2$-transitive groups on $35$ points are $A_{35}$ and $S_{35}$, which are not involved in the automorphism group of this design (the unique minimal normal subgroup of the automorphism group of this design is isomorphic to $A_8$); and the design with $15$ points where each point is contained in exactly $7$ blocks.  In this last case, the unique minimal normal subgroup of the automorphism group of the design is isomorphic to $A_6$.  While $A_6$ has a rank 3 action on $15$ points, the stabilizer of a point in this action has orbits of size $1$, $6$, and $8$.  However, if the incidence graph of this design had a $2$-transitive perfect matching, then the stabilizer of a point would have orbits of size $1$, $7$, and $7$.  Therefore, the only such graphs $\Gamma$ with $\Gamma[\alpha_1, \beta_1, \alpha_2, \beta_2] \cong P_4$ and $\Gamma$ bipartite are isomorphic to incidence graphs of Paley symmetric $2$-designs. 
\end{proof}

We are now ready to complete the proof of Theorem \ref{thm:2transperfect}.

\begin{proof}[Proof of Theorem \ref{thm:2transperfect}]
The result follows from Lemmas \ref{lem:div}, \ref{lem:22case}, \ref{lem:20case}, \ref{lem:11case}, \ref{lem:1021nonreg}, \ref{lem:regbij}, \ref{lem:01prim}, \ref{lem:01imprim}, and \ref{lem:01bipartite}. 
\end{proof}

Finally, we prove Corollary \ref{cor:mpermperfect}.

\begin{proof}[Proof of Corollary \ref{cor:mpermperfect}]
The result follows from Theorem \ref{thm:2transperfect} and noting which graphs in cases (3) and (4) have an induced symmetric group on the matching.  Since the group acting on the matching in each of (3) and (4) has a minimal normal subgroup that is elementary abelian and acts regularly on an odd number of edges, we conclude that the only option in cases (3) and (4) is when $m = 3$.  The result follows. 
\end{proof}

\noindent\textsc{Acknowledgements.}  
The authors wish to thank Thomas Zaslavsky for his numerous editorial suggestions and comments on earlier versions of this paper and the anonymous referees for their helpful reports.

\bibliographystyle{plain}
\bibliography{HighlyTransitiveMatchings.bib}

\end{document}